\definecolor{cobalt}{RGB}{61,89,171}
\newcommand{\frg}{{\mathfrak{g}}}
\newcommand{\frh}{{\mathfrak{h}}}
\newcommand{\fra}{{\mathfrak{a}}}
\newcommand\Real{{\mathfrak R}{\frak e}\,} 
\newcommand\Imag{{\mathfrak I}{\frak m}\,}
\theoremstyle{plain}
\newtheorem{theorem}{Theorem}[section]
\newtheorem{definition}[theorem]{Definition}
\newtheorem{lemma}[theorem]{Lemma}
\newtheorem{proposition}[theorem]{Proposition}
\newtheorem{corollary}[theorem]{Corollary}
\newtheorem{remark}[theorem]{Remark}
\newtheorem{remark-question}[section]{Remark-Question}
\title[]{Nilmanifolds with non-nilpotent complex structures and their pseudo-K\"ahler geometry}
\author{Adela Latorre}
\address[A. Latorre]{Departamento de Matem\'atica Aplicada,
Universidad Polit\'ecnica de Madrid,
Avda. Juan de Herrera 4,
28040 Madrid, Spain}
\email{adela.latorre@upm.es}
\author{Luis Ugarte}
\address[L. Ugarte]{Departamento de Matem\'aticas\,-\,I.U.M.A.\\
Universidad de Zaragoza\\
Campus Plaza San Francisco\\
50009 Zaragoza, Spain}
\email{ugarte@unizar.es}
\keywords{Nilpotent Lie algebra; complex structure; pseudo-K\"ahler structure; neutral Calabi-Yau metric}
\subjclass[2020]{Primary: 17B30, 53C15; Secondary: 32Q99, 53C50.}
\begin{document}

\begin{abstract}
We classify nilpotent Lie algebras with complex structures of weakly non-nilpotent type in real dimension eight, which is the lowest dimension where they arise. Our study, together with previous results on strongly non-nilpotent structures, completes the classification of 8-dimensional nilpotent Lie algebras admitting complex structures of non-nilpotent  type. As an application, we identify those that support a pseudo-K\"ahler metric, thus providing new counterexamples to a previous conjecture and an infinite family of (Ricci-flat) non-flat neutral Calabi-Yau structures. Moreover, we arrive at the topological restriction $b_1(X)\geq 3$ for every pseudo-K\"ahler nilmanifold~$X$ with an invariant complex structure, up to complex dimension four.
\end{abstract}

\maketitle

%
%
\section{Introduction}\label{}

\noindent This paper provides another step towards a better understanding of  nilmanifolds endowed with {\it invariant} complex structures $J$. It is well known that these spaces constitute an important class to explore many aspects of the geometry of compact complex non-K\"ahler manifolds. Concretely, our first goal here is to classify the nilpotent Lie algebras of real dimension~8 that support a complex structure $J$ of a special type called {\it weakly non-nilpotent}, {\it WnN} for short (see Definition~\ref{tipos_J} and Remark~\ref{classes-of-J}). 
This classification, together with the results on strongly non-nilpotent 
complex structures obtained in \cite{LUV-SnN-2}, gives rise to a complete classification of the 8-dimensional nilpotent Lie algebras admitting non-nilpotent $J$'s. 
Therefore, our results reduce the problem of classifying nilpotent Lie algebras with complex structures in the aforementioned dimension to the study of nilpotent complex structures.

Let us recall that the classification of complex structures on nilpotent Lie algebras of real dimensions $4$ and $6$ is well understood. In dimension $4$ there are only two complex structures defined on two different nilpotent Lie algebras that give rise to the complex torus and the Kodaira-Thurston manifold. In six dimensions, only $18$ of the $34$ non-isomorphic nilpotent Lie algebras admit a complex structure \cite{Salamon}, and the classification of complex structures up to equivalence on each
of these algebras is studied in \cite{COUV} (see also \cite{ABD,UV-SnN}). Every complex structure $J$ in real dimension 4 is nilpotent, whereas in dimension 6 it is either nilpotent or strongly non-nilpotent; there do not exist WnN complex structures in dimension $\leq 6$.

In real dimension 8 there are WnN complex structures~\cite{LUV-SnN}. These structures can be viewed as a subclass of \emph{quasi-nilpotent} complex structures $J$, which are defined by the condition 
$\mathcal Z(\frg) \cap J(\mathcal Z(\frg)) \not= \{0\}$,
where $\mathcal Z(\frg)$ denotes 
the center of the nilpotent Lie algebra~$\frg$. 
Inside this class, one can find well-known nilpotent complex structures, such as abelian or complex-parallelizable, but also non-nilpotent ones. 
Since for any quasi-nilpotent complex structure $J$ there is a $J$-invariant subspace in the center $\mathcal Z(\frg)$, by~\cite[Section 2]{LUV-SnN}, 
every $(\frg,J)$ 
can be constructed
as a certain extension of a lower dimensional nilpotent Lie algebra endowed with a complex structure.

In Section~\ref{sec:preliminaries}, after recalling the definitions and main properties of the different types of complex structures on nilpotent Lie algebras, we focus our attention on quasi-nilpotent structures $J$. We show in Proposition~\ref{f-iso-sucesion-a-Prop} that the equivalence relation for two such $J$'s is inherited by the induced complex structures $\tilde J$'s on the corresponding quotient algebras.

The previous result is applied to WnN structures in eight dimensions. 
Firstly, we construct in Section~\ref{subsec:construction-WnN} all the WnN complex structures by means of their complex structure equations. 
Hence, after a careful reduction of the equations (see Propositions~\ref{reduc-nu-0} and~\ref{reduc-nu-1}), we arrive at the classification of WnN complex structures up to equivalence in Section~\ref{subsec:classification-complex}. 
The final classification result corresponds to Theorem~\ref{main-theorem}. 

In Section~\ref{clasi-real} we classify the 8-dimensional nilpotent Lie algebras that admit WnN complex structures. It turns out that there are exactly 10 non-isomorphic algebras, four of them having an infinite number of (non-equivalent) complex structures. Theorem~\ref{main-theorem-2} provides the classification of algebras, whereas Table~1 describes the moduli spaces of WnN complex structures on each one of the ten nilpotent Lie algebras. 
Furthermore, the algebras are all rational, so they give rise to new complex nilmanifolds $X$ with $\dim_{\mathbb C}X=4$. 


Our second goal in this paper is to  
apply the classification results on WnN complex structures $J$ to the study of existence of pseudo-K\"ahler and neutral Calabi-Yau metrics on the corresponding complex nilmanifolds $X=(M,J)$. Note that  pseudo-K\"ahler Lie groups can be used in the construction of Sasaki-Einstein solvmanifolds of indefinite signature \cite{CRD}. 
In real dimension 4, the Lie algebras carrying a pseudo-K\"ahler structure are classified in \cite{Ovando}. For nilpotent Lie algebras of real dimension 6, the classification is given in \cite{CFU}. However, a classification in eight dimensions is not known. 

It was conjectured in \cite{CFU} that $J$ must be nilpotent in the presence of a pseudo-K\"ahler structure. Recently, a counterexample was given by the authors in eight dimensions \cite{LU-PuresAppl}, 
which in addition admits (Ricci-flat) non-flat neutral Calabi-Yau metrics. Moreover, it
was used to prove that the pseudo-K\"ahler, neutral K\"ahler and neutral Calabi-Yau properties are not stable under small deformations of the complex structure, in any complex dimension $n>2$. 
In Section~\ref{clasi-pseudoK} we identify all the non-nilpotent complex structures admitting a pseudo-K\"ahler metric (see Theorem~\ref{class-pK-nN-dim8}). We show that the counterexample in eight dimensions found in \cite{LU-PuresAppl} 
is unique in the class of strongly non-nilpotent complex structures. We also find an infinite family of new counterexamples in the class of WnN complex structures.

In Theorem~\ref{new-neutral-CY} we prove that 
the families of pseudo-K\"ahler metrics obtained in Theorem~\ref{class-pK-nN-dim8} 
provide new neutral Calabi-Yau metrics in eight dimensions that are (Ricci-flat) non-flat. 
We recall here that many examples of neutral Calabi-Yau metrics in the literature come from hypersymplectic structures \cite{Hit}. 
Indeed, several constructions of such structures on nilpotent Lie algebras have been obtained, as the 2-step examples (of Kodaira type) in \cite{FPPS} or the 3-step examples given in \cite{AD} (see also \cite{Andrada}). The first 4-step nilpotent Lie algebra with a hypersymplectic structure can be found in \cite{BGL}. New 2-step nilpotent hypersymplectic Lie algebras whose underlying complex structure is not abelian appear in \cite{ContiGil}.
Note that the pseudo-K\"ahler Lie algebras whose underlying $J$ is abelian can be inductively described by a certain method of double extensions \cite{BajoS}. 
All the known hypersymplectic nilpotent Lie algebras are, in particular, neutral Calabi-Yau (hence, also pseudo-K\"ahler). It is worthy to mention that
their underlying complex structures $J$ are of nilpotent type. 

In Proposition~\ref{non-existence-hol-symplectic} we prove that the neutral Calabi-Yau structures constructed in Theorem~\ref{new-neutral-CY} do not come from a hypersymplectic structure. Moreover, we show that non-nilpotent complex structures in real dimension 8 do not admit any complex symplectic form, thus extending the non-existence result  for strongly non-nilpotent complex structures given in \cite[Proposition 5.8]{BFLM}. 

It is well known that the existence of a (positive definite) K\"ahler metric on a compact complex manifold imposes strong restrictions on the topology of the manifold. For nilmanifolds, the only (positive definite) K\"ahler spaces are tori \cite{Hasegawa}. However, in the pseudo-K\"ahler setting no topological obstruction is known, to our knowledge, on the compact complex manifold, apart from those coming from the existence of a symplectic form. 
In Theorem~\ref{restrictions-pK} we arrive at the topological restriction $b_1(X)\geq 3$ for every pseudo-K\"ahler nilmanifold $X$ with an invariant complex structure, up to complex dimension~4. This result suggests the existence of a possibly more general topological obstruction, at least for nilmanifolds. 

%
%

%
%
\section{Complex structures on nilpotent Lie algebras}\label{sec:preliminaries}

\noindent The aim of this section is to study invariant complex structures on nilmanifolds. As these complex structures are determined by complex structures defined on the nilpotent Lie algebra associated to the nilmanifold, we will mainly work at the Lie algebra level.

\medskip 
Let $\frg$ be a real Lie algebra. The \emph{ascending central series} 
$\{\frg_k\}_{k\geq 0}$ of $\frg$ is defined as
\begin{equation}\label{ascending-central-series}
	\frg_0=\{0\} \quad \text{ and } \quad
	\frg_k=\{X\in\frg \mid [X,\frg]\subseteq \frg_{k-1}\}, \text{ for } k\geq 1.
\end{equation}
Observe that $\frg_1=\mathcal Z(\frg)$ is, precisely, the \emph{center} of~$\frg$. 
If there is an integer~$k\geq 1$ such that~$\frg_k=\frg$, the Lie algebra $\frg$ is called \emph{nilpotent}. The smallest integer $s$
for which $\frg_s=\frg$ is called the \emph{nilpotency step} of~$\frg$. If $\mathfrak g$ is a nilpotent Lie algebra (NLA for short), the corresponding Lie group $G$ is also  
called nilpotent.

We recall that a \emph{nilmanifold} $\Gamma\backslash G$ is a compact quotient of a connected, simply connected, nilpotent Lie group $G$ by a lattice $\Gamma\leq G$. Any basis of the Lie algebra $\mathfrak g$ of $G$ generates a basis of left-invariant vector fields on $G$ that can be transferred to the nilmanifold $\Gamma\backslash G$. Moreover, given $G$, the existence of a lattice $\Gamma$ making $\Gamma\backslash G$ a compact quotient is guaranteed by the existence of a basis of $\mathfrak g$ where the structure constants are rational numbers~\cite{Mal}. Thus, in this sense one can identify a nilmanifold $\Gamma\backslash G$ with the \emph{structure equations} of its Lie algebra, namely,
\begin{equation}\label{def-ecus-estructura}
de^k=\sum_{1\leq i<j\leq m} c_{ij}^k\,e^i\wedge e^j, \quad 1\leq k\leq m=\dim\frg,
\end{equation}
where $\{e^k\}_k$ is a basis of the dual space $\mathfrak g^*$ of $\mathfrak g$. To describe a Lie algebra defined by \eqref{def-ecus-estructura} 
we will write $\frg=(\sum_{i<j} c_{ij}^1\!\cdot\! i j,\ldots, \sum_{i<j} c_{ij}^m\!\cdot\! i j)$. For instance, $\mathfrak h=(0,0,0,2\!\cdot\!13)$ means that $\mathfrak h$ is a $4$-dimensional Lie algebra with structure equations $de^1=de^2=de^3=0$, $de^4=2\, e^1\wedge e^3$. Remember that the Lie bracket of any Lie algebra $\mathfrak g$ can be recovered from~\eqref{def-ecus-estructura} using the well-known formula
\begin{equation}\label{rel-diferencial-corchete}
d\alpha(X,Y)=-\alpha([X,Y]), \quad 
	\forall\,\alpha\in\frg^*, \ \forall\,X,Y\in\frg.
\end{equation}

If $G$ is a complex Lie group, then the nilmanifold $\Gamma\backslash G$ is a complex(-parallelizable) manifold. However, one can also construct complex nilmanifolds using even-dimensional real Lie groups admitting complex structures. In fact, in this paper we are interested in finding \emph{invariant} complex structures $J$ on real nilmanifolds, which  are those $J$'s determined by complex structures directly constructed on the Lie algebra $\mathfrak g$ of $G$. 

Recall that a \emph{complex structure} $J$ on a $2n$-dimensional Lie algebra $\frg$ is an endomorphism 
$J\colon\frg\to\frg$ satisfying $J^2=-\textrm{Id}$ and the integrability condition
$$
N_J(X,Y):=[X,Y]+J[JX,Y]+J[X,JY]-[JX,JY]=0,
$$
for all $ X,Y\in\frg$.

\subsection{Different types of complex structures on NLAs}\label{subsec:different-types}

Let $\mathfrak g$ be a nilpotent Lie algebra with a complex structure $J$. The terms~$\frg_{k}$ in the ascending central series~\eqref{ascending-central-series} might not be invariant under~$J$, as observed 
in~\cite{CFGU-dolbeault}. For this reason, a new series is introduced considering both the complex structure~$J$ and the structure of the Lie algebra~$\frg$. It is called the \emph{ascending $J$-compatible series} 
$\{\fra_{k}(J)\}_{k}$ of~$\frg$, and it is defined as follows:
$$
\begin{cases}
\fra_0(J)=\{0\}, \text{ and } \\
\fra_k(J)=\{X\in\frg \mid [X,\frg]\subseteq \fra_{k-1}(J)\ {\rm and\ } [JX,\frg]\subseteq \fra_{k-1}(J)\}, \text{ for } k\geq 1.
\end{cases}
$$
Observe that each $\fra_k(J)\subseteq\frg_{k}$ is a $J$-invariant ideal of $\frg$, so it has even dimension. Moreover, 
$\fra_1(J)$ is the largest $J$-invariant subspace contained in the center $\frg_1$ of $\mathfrak g$. 

The behaviour of the series~$\{\fra_{k}(J)\}_{k}$ determines different types of complex structures on NLAs.

\begin{definition}\label{tipos_J}\cite{CFGU-dolbeault, LUV-SnN}
{\rm
A complex structure $J$ on a nilpotent Lie algebra $\frg$ is called
\begin{itemize}
\item[i)] \emph{strongly non-nilpotent}, or \emph{SnN} for short, if $\fra_1(J)=\{0\}$;

\smallskip
\item[ii)] \emph{quasi-nilpotent}, if $\fra_1(J)\neq\{0\}$; moreover, $J$ is called
 \begin{itemize}
 \item[a)] \emph{nilpotent}, if there exists an integer~$t>0$ such that~$\fra_t(J)=\frg$,
 \item[b)] \emph{weakly non-nilpotent}, or \emph{WnN} for short, if there is an integer~$t>0$ satisfying~$\fra_t(J)=\fra_{t+l}(J)$, for every~$l\geq 1$,
           and~$\fra_t(J)\neq\frg$.
 \end{itemize}
\end{itemize}
}
\end{definition}

\begin{remark}\label{classes-of-J}
Let ${\mathcal C}(\frg)$ be the set of complex structures $J$ on a nilpotent Lie algebra $\frg$. From the definitions, one has
${\mathcal C}(\frg)={\mathcal{QN}}\!(\frg) \mathop{\dot{\cup}} {\mathcal{S{\it n }N}}\!(\frg)= {\mathcal N}\!(\frg) \mathop{\dot{\cup}} {\mathcal{{\it n }N}}\!(\frg) $, and  
$${\mathcal{QN}}\!(\frg) = {\mathcal N}\!(\frg) \mathop{\dot{\cup}} {\mathcal{W{\it n }N}}\!(\frg), \quad {\mathcal{{\it n }N}}\!(\frg)={\mathcal{W{\it n }N}}\!(\frg) \mathop{\dot{\cup}} {\mathcal{S{\it n }N}}\!(\frg),$$
where ${\mathcal{QN}}\!(\frg)$, ${\mathcal N}\!(\frg)$, ${\mathcal{W{\it n }N}}\!(\frg)$, ${\mathcal{{\it n }N}}\!(\frg)$ and ${\mathcal{S{\it n }N}}\!(\frg)$ denote, respectively, the spaces of quasi-nilpotent, nilpotent, WnN, non-nilpotent and SnN complex structures on $\frg$. 
\end{remark}

We recall that every complex structure $J$ on a $4$-dimensional NLA $\frg$ is of nilpotent type. 
In dimension $6$, a complex structure $J$ is either nilpotent or strongly non-nilpotent. 
The first examples of weakly non-nilpotent complex structures appear in real dimension~$8$ (see, for instance,~\cite[Example~2.3]{LUV-SnN}).

Let $J$ be a quasi-nilpotent complex structure on a $2n$-dimensional NLA $\frg$. Then, there exists $t\in\mathbb N$ such that
$$
\{0\}=\fra_0(J)\subsetneq\fra_1(J)\subsetneq\ldots\subsetneq\fra_{t-1}(J)\subsetneq\fra_{t}(J)=\fra_{t+l}(J), \ \text{ for } l\geq 1.
$$
For any $q\geq 1$ such that $\fra_q(J)\not=\frg$, we consider 
the quotient Lie algebra
\begin{equation}\label{induced-gq}
\tilde\frg_q=\frg\slash\fra_q(J),
\end{equation}
with the induced complex structure
\begin{equation}\label{induced-Jq}
\tilde J_q(\tilde X) := \widetilde{JX}, \quad \text{ for all }\tilde X\in\tilde\frg_q,
\end{equation}
where $\tilde X$ and $\widetilde{JX}$ denote, respectively, the classes of $X$ and $JX$ in $\tilde\frg_q$. The terms of the compatible series of $(\tilde\frg_q,\tilde J_q)$ and $(\frg,J)$
are related as follows~\cite[Lemma~3]{CFGU-proceeding}:

\begin{lemma}\label{lema-cociente}\cite{CFGU-proceeding}
Let $J$ be a quasi-nilpotent complex structure on $\frg$. Consider a non-trivial term $\fra_q(J)$ of the ascending $J$-compatible series of $\frg$. 
Then, the ascending $\tilde J_q$-compatible series $\{\fra_l(\tilde J_q)\}_l$ of the quotient Lie algebra $\tilde\frg_q$ with induced complex structure~$\tilde J_q$ satisfies
$$\fra_l(\tilde J_q)\cong\fra_{q+l}(J)/\fra_q(J), \text{ for all }l\geq 0.$$
\end{lemma}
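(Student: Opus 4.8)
The plan is to argue by induction on $l$, translating the defining recursion of the $\tilde J_q$-compatible series into the recursion defining the $J$-compatible series by means of the quotient projection $\pi\colon\frg\to\tilde\frg_q$, $X\mapsto\tilde X$. First I would record the two elementary facts that make the argument run. Since $\fra_q(J)$ is a $J$-invariant ideal of $\frg$, the map $\pi$ is a Lie algebra epimorphism that intertwines the complex structures, so that $[\tilde X,\tilde Y]=\pi([X,Y])$ and $\tilde J_q\tilde X=\pi(JX)$ for all $X,Y\in\frg$. Moreover, the ascending $J$-compatible series is monotone, so $\ker\pi=\fra_q(J)\subseteq\fra_{q+l}(J)$ for every $l\geq 0$; in particular each term $\fra_{q+l}(J)$ contains $\ker\pi$, whence $\pi(\fra_{q+l}(J))=\fra_{q+l}(J)/\fra_q(J)$ by the first isomorphism theorem. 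The asserted isomorphism is therefore equivalent to the equality of subspaces $\fra_l(\tilde J_q)=\pi(\fra_{q+l}(J))$ inside $\tilde\frg_q$, and this is what I prove by induction.

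The base case $l=0$ reads $\fra_0(\tilde J_q)=\{0\}=\pi(\fra_q(J))$, which holds because $\ker\pi=\fra_q(J)$. For the inductive step, assume $\fra_l(\tilde J_q)=\pi(\fra_{q+l}(J))$. By definition of the compatible series on the quotient, $\tilde X\in\fra_{l+1}(\tilde J_q)$ if and only if $[\tilde X,\tilde\frg_q]\subseteq\fra_l(\tilde J_q)$ and $[\tilde J_q\tilde X,\tilde\frg_q]\subseteq\fra_l(\tilde J_q)$. Substituting $[\tilde X,\tilde Y]=\pi([X,Y])$ and $\tilde J_q\tilde X=\pi(JX)$, and using the inductive hypothesis, these two conditions become $\pi([X,\frg])\subseteq\pi(\fra_{q+l}(J))$ and $\pi([JX,\frg])\subseteq\pi(\fra_{q+l}(J))$.

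I would then promote each of these back up to $\frg$. Since $\ker\pi=\fra_q(J)\subseteq\fra_{q+l}(J)$, the inclusion $\pi([X,\frg])\subseteq\pi(\fra_{q+l}(J))$ forces $[X,\frg]\subseteq\fra_{q+l}(J)+\fra_q(J)=\fra_{q+l}(J)$, and likewise $[JX,\frg]\subseteq\fra_{q+l}(J)$. Comparing with the definition of $\fra_{q+l+1}(J)$, these are precisely the conditions $X\in\fra_{q+l+1}(J)$. Passing to a representative $X$ of $\tilde X$ and noting that membership in $\fra_{q+l+1}(J)$ is independent of the choice of representative (again because $\fra_q(J)\subseteq\fra_{q+l+1}(J)$), this shows both inclusions and yields $\fra_{l+1}(\tilde J_q)=\pi(\fra_{q+l+1}(J))=\fra_{q+l+1}(J)/\fra_q(J)$, completing the induction.

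The only delicate point — and the step I would write out most carefully — is this promotion from the quotient back to $\frg$: a priori $\pi([X,\frg])\subseteq\pi(\fra_{q+l}(J))$ only gives $[X,\frg]\subseteq\fra_{q+l}(J)+\ker\pi$, so some information could in principle be lost along $\ker\pi$. What rescues the argument is exactly that $\ker\pi=\fra_q(J)$ is itself a lower term of the same ascending series, so that $\fra_{q+l}(J)+\ker\pi=\fra_{q+l}(J)$. This is where the hypothesis that $q$ indexes a term of the $J$-compatible series, together with the monotonicity $\fra_q(J)\subseteq\fra_{q+l}(J)$, is indispensable; for an arbitrary $J$-invariant ideal the identification would hold only up to adding that ideal.
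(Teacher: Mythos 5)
Your proof is correct. Note that the paper itself does not prove this lemma: it is quoted from \cite[Lemma~3]{CFGU-proceeding}, so there is no internal argument to compare against; your proposal supplies the missing, self-contained proof, and it is the natural one. The induction establishing the equality of subspaces $\fra_l(\tilde J_q)=\pi\big(\fra_{q+l}(J)\big)$ is sound: the projection $\pi$ intertwines brackets and complex structures because $\fra_q(J)$ is a $J$-invariant ideal, and the key ``promotion'' step $\pi(S)\subseteq\pi(T)\Rightarrow S\subseteq T$ is valid precisely because $\ker\pi=\fra_q(J)\subseteq\fra_{q+l}(J)$ by monotonicity of the ascending $J$-compatible series --- the point you correctly isolate as the crux. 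Both directions of the inductive step (membership in $\fra_{l+1}(\tilde J_q)$ versus membership of a representative in $\fra_{q+l+1}(J)$) are covered, including independence of the choice of representative, so the argument is complete.
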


Suppose now that $J$ is weakly non-nilpotent.
Then, one can choose $q$ in Lemma~\ref{lema-cociente} to be the smallest integer $t$ where the ascending $J$-compatible series $\{\fra_k(J)\}_k$ of $\frg$ stabilizes, namely, $\fra_t(J)=\fra_{t+l}(J)$ for every $l\geq 1$. Since 
$$
\fra_l(\tilde J_t) \cong\fra_{t+l}(J)/\fra_t(J)=\{0\}, \text{ for every }l\geq 0,
$$
the complex structure $\tilde J_t$ defined by~\eqref{induced-Jq} on the quotient Lie algebra $\tilde\frg_t=\frg\slash\fra_t(J)$ is of strongly non-nilpotent type. 
Hence, $2n$-dimensional NLAs with weakly non-nilpotent complex structures are closely related to lower-dimensional NLAs admitting strongly non-nilpotent complex structures. As six is the lowest dimension where SnN structures appear, one immediately has the following result in real dimension $2n=8$:

\begin{lemma}\label{lema1}
	Let $(\frg,J)$ be an 8-dimensional NLA with a weakly non-nilpotent complex structure. Let~$t$ be the smallest integer for which the ascending $J$-compatible series
$\{\fra_{k}(J)\}_{k}$ of~$\frg$ satisfies $\{0\}\neq\fra_t(J)=\fra_{t+l}(J)\neq\frg$, for every $l\geq 1$. Then, 
	$(\tilde\frg_t, \tilde J_t)$
	is a $6$-dimensional NLA with an SnN complex structure. In particular, one has $\dim\fra_t(J)=2$ and thus $\fra_t(J)=\fra_1(J)$.
\end{lemma}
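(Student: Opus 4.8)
The plan is to combine the strongly non-nilpotent conclusion already extracted from Lemma~\ref{lema-cociente} with the known dimensional obstruction to the existence of SnN complex structures. By the discussion preceding the statement, the induced structure $\tilde J_t$ on the quotient $\tilde\frg_t=\frg/\fra_t(J)$ satisfies $\fra_1(\tilde J_t)=\{0\}$, so $\tilde J_t$ is strongly non-nilpotent; and $\tilde\frg_t$ is a nilpotent Lie algebra, being a quotient of the nilpotent $\frg$ by the $J$-invariant ideal $\fra_t(J)$. Thus the only point left to establish is the dimension of the quotient, from which the remaining assertions will follow by bookkeeping. One should first confirm that $\tilde J_t$ is a genuine (integrable) complex structure on $\tilde\frg_t$, which is guaranteed precisely because $\fra_t(J)$ is a $J$-invariant ideal, as already used in the definition~\eqref{induced-Jq}.

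Next I would record the elementary dimension constraints on $\fra_t(J)$. Since $\fra_t(J)$ is a $J$-invariant ideal, it is even-dimensional; it is nonzero because $J$ is quasi-nilpotent, so $\fra_1(J)\neq\{0\}$ and $\fra_1(J)\subseteq\fra_t(J)$; and it is proper by the weak non-nilpotency hypothesis $\fra_t(J)\neq\frg$. Hence $\dim\fra_t(J)\in\{2,4,6\}$, and correspondingly $\dim\tilde\frg_t=8-\dim\fra_t(J)\in\{6,4,2\}$.

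The key step is then to invoke the fact, recalled earlier in the section, that strongly non-nilpotent complex structures do not occur in real dimension $\leq 4$: every complex structure on a nilpotent Lie algebra of dimension $2$ or $4$ is of nilpotent type, and six is the lowest dimension where SnN structures arise. Since $\tilde J_t$ is SnN, this forces $\dim\tilde\frg_t\geq 6$. Combined with $\dim\tilde\frg_t\leq 6$ from the previous paragraph, we obtain $\dim\tilde\frg_t=6$ and therefore $\dim\fra_t(J)=2$; in particular $(\tilde\frg_t,\tilde J_t)$ is a $6$-dimensional NLA with SnN complex structure.

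Finally, the identity $\fra_t(J)=\fra_1(J)$ is immediate: we have $\fra_1(J)\subseteq\fra_t(J)$, both terms are even-dimensional, and $\fra_1(J)\neq\{0\}$ gives $\dim\fra_1(J)\geq 2=\dim\fra_t(J)$, so equality of dimensions forces equality of the subspaces. I expect no genuine obstacle in this argument: the whole proof is a dimension count resting on a single substantive external input, namely the nonexistence of SnN structures below real dimension $6$. The one point requiring care is the verification, mentioned above, that passing to the quotient by $\fra_t(J)$ indeed yields a well-defined complex structure, which is exactly what the $J$-invariance of $\fra_t(J)$ provides.
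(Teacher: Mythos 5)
Your proof is correct and follows essentially the same route as the paper, which states this lemma as an immediate consequence of the preceding discussion: Lemma~\ref{lema-cociente} shows $\fra_l(\tilde J_t)\cong\fra_{t+l}(J)/\fra_t(J)=\{0\}$, so $\tilde J_t$ is SnN, and the dimension count then rests on the fact that six is the lowest dimension where SnN structures appear. Your write-up merely makes explicit the bookkeeping (even-dimensionality of the $J$-invariant ideal $\fra_t(J)$, properness, and the inclusion $\fra_1(J)\subseteq\fra_t(J)$) that the paper leaves implicit.
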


\subsection{Equivalence of quasi-nilpotent complex structures}\label{subsec:equivalence}

To classify a geometric structure, it is natural to introduce a notion of equivalence. In the case of complex structures on Lie algebras, one has the following one:

\begin{definition}\label{def-equivalence}
Two complex structures $J$ and $J'$ defined, respectively, on the Lie algebras $\frg$ and $\frg'$
are said to be 
\emph{equivalent} if there exists an isomorphism of Lie algebras $f:\frg\to\frg'$ satisfying $f\circ J=J'\circ f$.
\end{definition}

We next show that the notion of equivalence 
behaves appropriately with respect to the ascending $J$-compatible series.

\begin{lemma}\label{f-iso-sucesion-a-Lema}
Let $(\frg,J)$ and $(\frg',J')$ be two NLAs with complex structures. 
If there is an equivalence between $J$ and $J'$ due to $f:\frg\to\frg'$, then $f\big(\fra_k(J)\big)=\fra_k(J')$, for every $k\geq 0$.
\end{lemma}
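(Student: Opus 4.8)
The plan is to argue by induction on $k$, directly from the recursive definition of the ascending $J$-compatible series. The base case $k=0$ is immediate: since $\fra_0(J)=\{0\}$ and $f$ is linear, $f(\fra_0(J))=\{0\}=\fra_0(J')$.

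For the inductive step, I would assume $f(\fra_{k-1}(J))=\fra_{k-1}(J')$ and take an arbitrary $X\in\fra_k(J)$, so that $[X,Y]\in\fra_{k-1}(J)$ and $[JX,Y]\in\fra_{k-1}(J)$ for all $Y\in\frg$. To verify $f(X)\in\fra_k(J')$, fix any $Y'\in\frg'$ and write $Y'=f(Y)$, which is possible because $f$ is an isomorphism and hence surjective. Since $f$ preserves brackets, $[f(X),Y']=f([X,Y])\in f(\fra_{k-1}(J))=\fra_{k-1}(J')$ by the inductive hypothesis. For the second defining condition I would use the intertwining relation $f\circ J=J'\circ f$, which gives $J'f(X)=f(JX)$, whence $[J'f(X),Y']=[f(JX),f(Y)]=f([JX,Y])\in\fra_{k-1}(J')$. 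As $Y'$ was arbitrary, this shows $f(X)\in\fra_k(J')$, and therefore $f(\fra_k(J))\subseteq\fra_k(J')$.

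The reverse inclusion comes essentially for free. Since $f$ is an isomorphism intertwining $J$ and $J'$, its inverse $f^{-1}\colon\frg'\to\frg$ is again an equivalence, now between $J'$ and $J$: the relation $f\circ J=J'\circ f$ rearranges to $f^{-1}\circ J'=J\circ f^{-1}$. Applying the inclusion just established to $f^{-1}$ yields $f^{-1}(\fra_k(J'))\subseteq\fra_k(J)$, that is $\fra_k(J')\subseteq f(\fra_k(J))$. Combining the two inclusions gives the claimed equality $f(\fra_k(J))=\fra_k(J')$ for every $k\geq 0$.

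This lemma is in essence a bookkeeping argument, so I do not expect a genuine obstacle. The only point requiring care is that the definition of $\fra_k$ imposes two conditions, one on $[X,\frg]$ and one on $[JX,\frg]$, so the compatibility $f\circ J=J'\circ f$ must be invoked precisely to transport the second condition; it is the surjectivity of $f$ that allows $Y'$ to range over all of $\frg'$, and the symmetry of the hypothesis under passing to $f^{-1}$ that reduces the proof to a single inclusion.
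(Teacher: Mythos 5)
Your proposal is correct and follows essentially the same route as the paper's proof: an induction establishing the inclusion $f\big(\fra_k(J)\big)\subseteq\fra_k(J')$ (using surjectivity of $f$, bracket preservation, and the intertwining relation $f\circ J=J'\circ f$), followed by applying the same argument to $f^{-1}$, which satisfies $f^{-1}\circ J'=J\circ f^{-1}$, to obtain the reverse inclusion. The only cosmetic difference is that you argue elementwise while the paper writes the same induction as a chain of set inclusions, and your inductive hypothesis is stated as an equality where an inclusion suffices; neither affects correctness.
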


\begin{proof}
First, we will prove by induction that $f\big(\fra_k(J)\big) \subseteq \fra_k(J')$ for every $k\geq 0$. Since this is clear for $k=0$, let us suppose that it holds for $k-1$. Then, for any homomorphism of Lie algebras $f:\frg\to\frg'$ such that $f\big(\fra_{k-1}(J)\big) \subseteq \fra_{k-1}(J')$, we have
\begin{eqnarray*}
f\big(\fra_k(J)\big) \!&\!\!=\!\!&\! \big\{ f(X)\in\frg' \mid [X,\frg]\subseteq \fra_{k-1}(J) \mbox{ and } [JX,\frg]\subseteq \fra_{k-1}(J) \big\} \\
\!&\!\!\subseteq\!\!&\! \big\{ f(X)\in\frg' \mid f\big([X,\frg]\big)\subseteq f(\fra_{k-1}(J)) \mbox{ and }  f\big([JX,\frg]\big)\subseteq f(\fra_{k-1}(J)) \big\} \\
\!&\!\!\subseteq\!\!&\! \big\{ f(X)\in\frg' \mid [f(X),f(\frg)]\subseteq \fra_{k-1}(J') \mbox{ and }  [f(JX),f(\frg)]\subseteq \fra_{k-1}(J') \big\}.
\end{eqnarray*}
Now, as $f$ is an isomorphism of Lie algebras satisfying $f\circ J=J'\circ f$, one gets that $f(\frg)=\frg'$ and thus
\begin{eqnarray*}
f\big(\fra_k(J)\big) \!&\!\!\subseteq\!\!&\! 
\big\{ f(X)\!\in\frg' \mid [f(X),\frg']\subseteq \fra_{k-1}(J') \mbox{ and }  [J'\big(f(X)\big),\frg']\subseteq \fra_{k-1}(J') \big\} = \fra_k(J').
\end{eqnarray*}
We now observe that the map $f^{-1}:\frg'\to\frg$ is also an isomorphism of Lie algebras that additionally satisfies $f^{-1}\circ J'=J\circ f^{-1}$. Thus, 
$f^{-1}\big(\fra_k(J')\big) \subseteq \fra_k(J)$ for every $k\geq 0$. 
Applying $f$, we get $\fra_k(J') \subseteq f\big(\fra_k(J)\big)$ and the statement of the lemma is proved.
\end{proof}

A direct consequence of the previous result is that two equivalent complex structures must be of the same type, according to Definition~\ref{tipos_J}. 
The next result shows that the notion of equivalence also
behaves appropriately with quotients, namely, 
it induces an equivalence between the complex structures~\eqref{induced-Jq} defined on the corresponding quotient Lie algebras~\eqref{induced-gq}.

\begin{proposition}\label{f-iso-sucesion-a-Prop}
Let $(\frg,J)$ and $(\frg',J')$ be two NLAs endowed with quasi-nilpotent complex structures. Fix some $k\geq 1$ such that $\fra_k(J)\not=\frg$, 
and consider the quotient Lie algebras $\tilde\frg_k=\frg\slash\fra_k(J)$ 
and $\tilde\frg'_k=\frg'\slash\fra_k(J')$ with their respective induced complex structures $\tilde J_k$ and $\tilde J'_k$. 
If there is an isomorphism  of Lie algebras $f:\frg\to\frg'$ such that $f\circ J=J'\circ f$, then the map $\tilde{f}_k:\tilde\frg_k\to\tilde\frg'_k$ given by
$$\tilde{f}_k(\tilde X):=\widetilde{f(X)},$$
for every $\tilde X\in \tilde\frg_k$, defines an isomorphism of Lie algebras satisfying $\tilde{f}_k\circ \tilde J_k=\tilde J'_k\circ \tilde{f}_k$.\end{proposition}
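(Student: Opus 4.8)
The plan is to verify the three required properties of $\tilde{f}_k$ in turn: that it is well defined, that it is a Lie algebra isomorphism, and that it intertwines the induced complex structures. The key tool throughout will be Lemma~\ref{f-iso-sucesion-a-Lema}, which guarantees that $f\big(\fra_k(J)\big)=\fra_k(J')$; this single fact is what makes the construction coherent.

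First I would establish that $\tilde{f}_k$ is \emph{well defined}. The map is prescribed on representatives by $\tilde{f}_k(\tilde X)=\widetilde{f(X)}$, so I must check that if $\tilde X=\tilde Y$ in $\tilde\frg_k$, then $\widetilde{f(X)}=\widetilde{f(Y)}$ in $\tilde\frg'_k$. Now $\tilde X=\tilde Y$ means $X-Y\in\fra_k(J)$, and applying $f$ together with Lemma~\ref{f-iso-sucesion-a-Lema} gives $f(X)-f(Y)=f(X-Y)\in f\big(\fra_k(J)\big)=\fra_k(J')$, whence $\widetilde{f(X)}=\widetilde{f(Y)}$. So independence of the representative holds precisely because $f$ carries the quotiented-out ideal of $\frg$ onto the quotiented-out ideal of $\frg'$.

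Next I would check that $\tilde{f}_k$ is an \emph{isomorphism of Lie algebras}. Linearity and the bracket-preserving property are inherited from $f$ through the quotient projections: for linearity, $\tilde{f}_k(\tilde X+\lambda\tilde Y)=\widetilde{f(X+\lambda Y)}=\widetilde{f(X)}+\lambda\widetilde{f(Y)}$, and for brackets, $\tilde{f}_k([\tilde X,\tilde Y])=\tilde{f}_k(\widetilde{[X,Y]})=\widetilde{f([X,Y])}=\widetilde{[f(X),f(Y)]}=[\widetilde{f(X)},\widetilde{f(Y)}]=[\tilde{f}_k(\tilde X),\tilde{f}_k(\tilde Y)]$, using that $f$ is a Lie algebra homomorphism and that the brackets on $\tilde\frg_k$ and $\tilde\frg'_k$ are the ones induced by the projections. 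For bijectivity, the natural candidate for the inverse is $\widetilde{(f^{-1})}_k$, built from $f^{-1}\colon\frg'\to\frg$ by the same recipe; since $f^{-1}$ is itself an equivalence (as noted in the proof of Lemma~\ref{f-iso-sucesion-a-Lema}) and $f^{-1}\big(\fra_k(J')\big)=\fra_k(J)$, the composite $\widetilde{(f^{-1})}_k\circ\tilde{f}_k$ sends $\tilde X$ to $\widetilde{f^{-1}(f(X))}=\tilde X$, and similarly on the other side, so $\tilde{f}_k$ is bijective.

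Finally I would verify the intertwining relation $\tilde{f}_k\circ\tilde J_k=\tilde J'_k\circ\tilde{f}_k$. Unwinding the definitions~\eqref{induced-Jq} of the induced complex structures, for any $\tilde X\in\tilde\frg_k$ one has $\tilde{f}_k\big(\tilde J_k(\tilde X)\big)=\tilde{f}_k\big(\widetilde{JX}\big)=\widetilde{f(JX)}$, while $\tilde J'_k\big(\tilde{f}_k(\tilde X)\big)=\tilde J'_k\big(\widetilde{f(X)}\big)=\widetilde{J'(f(X))}$; these agree because $f\circ J=J'\circ f$ forces $f(JX)=J'(f(X))$. I do not expect a genuine obstacle here, as each clause reduces to a property of $f$ pushed through the quotient; the only point demanding care is the well-definedness in the first step, and that is exactly where Lemma~\ref{f-iso-sucesion-a-Lema} is indispensable—without the equality $f\big(\fra_k(J)\big)=\fra_k(J')$ the formula for $\tilde{f}_k$ would not descend to the quotient at all.
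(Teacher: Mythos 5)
Your proof is correct and follows essentially the same route as the paper: well-definedness via Lemma~\ref{f-iso-sucesion-a-Lema}, the homomorphism property computed on representatives, and the intertwining relation deduced directly from $f\circ J=J'\circ f$ pushed through the quotients. The only minor difference is the bijectivity step: you construct the explicit inverse $\widetilde{(f^{-1})}_k$ from $f^{-1}$, whereas the paper proves surjectivity and then invokes the dimension equality $\dim\tilde\frg_k=\dim\tilde\frg'_k$ (also a consequence of Lemma~\ref{f-iso-sucesion-a-Lema}); both arguments are valid, and yours has the slight advantage of not relying on finite-dimensionality.
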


\begin{proof}
Let us denote by $\pi_k:\mathfrak g\to\tilde{\mathfrak g}_k$ and $\pi'_k:\mathfrak g'\to\tilde{\mathfrak g}'_k$ the natural projections. Observe that the map $\tilde{f}_k:\tilde\frg_k\to\tilde\frg'_k$ is well-defined by Lemma~\ref{f-iso-sucesion-a-Lema}, and the following diagram 

\centerline{\xymatrix@W=5mm{
\frg\ar[r]^f\ar[d]_{\pi_k} & \,\frg'\ar[d]^{\pi'_k} \\
\tilde\frg_k\ar[r]^{\tilde{f}_k} & \tilde\frg'_k
}}

\noindent is commutative.

Let us see first that $\tilde f_k$ is an homomorphism of Lie algebras. Indeed, note that for any $\tilde X,\tilde Y\in \tilde\frg_k$, one can find $X,Y\in\frg$ such that $\tilde X=\pi_k(X)$ and $\tilde Y=\pi_k(Y)$. Then, as $\pi_k, \pi'_k$ and $f$ are homomorphisms of Lie algebras, we have
\begin{equation*}
\begin{split}
\tilde f_k\big([\tilde X, \tilde Y]_{\tilde\frg_k}\big) &= \tilde f_k\big([\pi_k(X), \pi_k(Y)]_{\tilde\frg_k}\big)
	= \tilde f_k\big(\pi_k([X, Y]_{\frg})\big) = \pi'_k\big(f([X,Y]_{\frg})\big)\\
&= \pi'_k\big([f(X),f(Y)]_{\frg'}\big) = \big[(\pi'_k\circ f)(X),(\pi'_k\circ f)(Y)\big]_{\tilde\frg'_k} \\
&= \big[(\tilde f_k\circ \pi_k)(X),(\tilde f_k\circ \pi_k)(Y)\big]_{\tilde\frg'_k} 
	= \big[\tilde f_k(\tilde X),\tilde f_k(\tilde Y)\big]_{\tilde\frg'_k}.
\end{split}
\end{equation*}
Moreover, $\tilde f_k$ is surjective. In fact, for each $\tilde X'\in\tilde \frg'_k$, take $X'\in\frg'$ such that $\pi'_k(X')=\tilde X'$. Since $f$ is an isomorphism, $X'=f(X)$ for some $X\in\frg$. Consequently, we get
$$\tilde X'=\pi'_k(X')=\pi'_k\big( f(X)\big)=\tilde f_k\big(\pi_k(X)\big),$$
where $\pi_k(X)\in\tilde\frg_k$. To conclude that $\tilde f_k$ is an isomorphism, it suffices to observe that $\dim\tilde\frg_k 
= \dim\tilde\frg'_k$, due to Lemma~\ref{f-iso-sucesion-a-Lema}.

Finally, to prove the relation $\tilde{f}_k\circ \tilde J_k=\tilde J'_k\circ \tilde{f}_k$ 
we will use the commutative diagram above, together with the equalities
$$
\pi_k\circ J=\tilde J_k\circ \pi_k, \qquad
\pi'_k\circ J'=\tilde J'_k\circ \pi'_k,
$$
that come directly from~\eqref{induced-Jq}. 
In particular, since $f\circ J=J'\circ f$, we necessarily have $\pi'_k\circ (f\circ J)=\pi'_k\circ (J'\circ f)$. The 
left-hand-side of this equality can be rewritten as 
$$\pi'_k\circ (f\circ J)=(\pi'_k\circ f)\circ J=(\tilde f_k\circ\pi_k)\circ J= \tilde f_k\circ(\pi_k\circ J)
	=\tilde f_k\circ(\tilde J_k\circ \pi_k),$$
whereas the right-hand-side is
$$\pi'_k\circ (J'\circ f)=(\pi'_k\circ J')\circ f=(\tilde J'_k\circ\pi'_k)\circ f= \tilde J'_k\circ(\pi'_k\circ f)
	=\tilde J'_k\circ(\tilde f_k\circ \pi_k).$$
This implies the equality $\tilde{f}_k\circ \tilde J_k=\tilde J'_k\circ \tilde{f}_k$ on the quotient Lie algebra $\tilde\frg_k$, as desired.
\end{proof}

Combining Lemma~\ref{lema1} and Proposition~\ref{f-iso-sucesion-a-Prop}, one has the following: 

\begin{corollary}\label{equiv-WnN-8D-inicio}
Let $(\frg,J)$ and $(\frg',J')$ be two $8$-dimensional NLAs with weakly non-nilpotent complex structures.
If $J$ and $J'$ are equivalent, then the strongly non-nilpotent complex structures $\tilde J_1$ and $\tilde J'_1$ induced, respectively, on the $6$-dimensional Lie algebras $\tilde\frg_1=\frg\slash\fra_1(J)$ and $\tilde\frg'_1=\frg'\slash\fra_1(J')$ are equivalent.
\end{corollary}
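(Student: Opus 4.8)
The plan is to deduce the corollary by directly combining the two preceding results, the only genuine content being to match the terms of the compatible series used in each. Since a weakly non-nilpotent complex structure is in particular quasi-nilpotent (Definition~\ref{tipos_J}), both $(\frg,J)$ and $(\frg',J')$ fall within the scope of Proposition~\ref{f-iso-sucesion-a-Prop}. Moreover, Lemma~\ref{lema1} guarantees that the stabilization term $\fra_t(J)$ of the ascending $J$-compatible series is $2$-dimensional, hence equals $\fra_1(J)$, and that the quotient $\tilde\frg_1=\frg\slash\fra_1(J)$ is a $6$-dimensional nilpotent Lie algebra whose induced complex structure $\tilde J_1$ is strongly non-nilpotent; the same applies to $(\frg',J')$. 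In particular $\fra_1(J)\neq\frg$ and $\fra_1(J')\neq\frg'$, so $k=1$ is an admissible choice in Proposition~\ref{f-iso-sucesion-a-Prop}.

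With these identifications in place, I would simply invoke Proposition~\ref{f-iso-sucesion-a-Prop} with $k=1$. Given an equivalence $f\colon\frg\to\frg'$ between $J$ and $J'$ (that is, a Lie algebra isomorphism with $f\circ J=J'\circ f$), the proposition produces the map $\tilde f_1\colon\tilde\frg_1\to\tilde\frg'_1$, $\tilde f_1(\tilde X)=\widetilde{f(X)}$, and asserts that $\tilde f_1$ is a Lie algebra isomorphism satisfying $\tilde f_1\circ\tilde J_1=\tilde J'_1\circ\tilde f_1$. This is precisely the condition in Definition~\ref{def-equivalence} for the induced structures to be equivalent, so the conclusion follows at once.

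The only point requiring a moment's care---and what I expect to be the main (modest) obstacle---is the bookkeeping that reconciles the indices of the two auxiliary results: Lemma~\ref{lema1} phrases the construction of the SnN quotient in terms of the stabilization index $t$, whereas Proposition~\ref{f-iso-sucesion-a-Prop} is applied at level $k=1$. These coincide exactly because the \emph{in particular} clause of Lemma~\ref{lema1} forces $\fra_t(J)=\fra_1(J)$ (and likewise for $J'$), so that $\tilde\frg_t=\tilde\frg_1$ and $\tilde J_t=\tilde J_1$. Once this identification is recorded, no further computation is needed; the corollary is a formal consequence of Lemma~\ref{lema1} and Proposition~\ref{f-iso-sucesion-a-Prop}.
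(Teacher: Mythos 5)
Your proof is correct and follows exactly the paper's own route: the paper derives this corollary precisely by combining Lemma~\ref{lema1} (which identifies $\fra_t(J)=\fra_1(J)$ and shows the quotient is a $6$-dimensional NLA with SnN structure) with Proposition~\ref{f-iso-sucesion-a-Prop} applied at $k=1$. Your additional bookkeeping—checking $\fra_1(J)\neq\frg$ so that $k=1$ is admissible, and reconciling the index $t$ with $k=1$—is exactly the implicit content of the paper's one-line derivation.
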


\section{Classification of weakly non-nilpotent complex structures\\ 
in real dimension 8}\label{clasi-compleja}

\noindent In this section we focus on real dimension 8, which is the lowest dimension where WnN complex structures arise. The main result is a classification of such structures up to equivalence, in terms of their complex structure equations.

Recall that any almost complex structure $J$ on $\frg$ can be equivalently defined on the complexified dual space $\frg^*_{\mathbb C}=\frg^*\otimes\mathbb C$ by the choice of a direct sum decomposition 
$$\frg^*_{\mathbb C}=\frg_J^{1,0}\oplus\frg_J^{0,1},$$
with $\frg_J^{0,1}=\overline{\frg_J^{1,0}}$. Indeed, $J:\frg^*_{\mathbb C}\to\frg^*_{\mathbb C}$ is defined by setting   
$\frg_J^{1,0}$ as its $i$-eigenspace.  
In greater detail, if $\{\omega^k\}_{k=1}^n$ is any basis of $\frg^{1,0}_J$, then $J$ is defined by taking the real $J$-adapted basis $\{e^k, Je^k\}_{k=1}^n$ of $\mathfrak g^*$ such that $e^k-i\,Je^k:=\omega^k$, for $1\leq k\leq n$.

Observe that the bigraduation on $\mathfrak g^*_{\mathbb C}$ moves to the complexified exterior algebra
$$\Big(\!\bigwedge\nolimits^k\frg^*\!\Big)_{\!\mathbb C}
	\cong\bigwedge\nolimits^k \frg^*_{\mathbb C} 
	=\bigwedge\nolimits^k\big(\frg_J^{1,0}\oplus\frg_J^{0,1} \big)
	= \bigoplus_{p+q=k}\bigwedge\nolimits^{p,q}_J\frg^*,\quad 1\leq k\leq 2n,$$
where $\bigwedge\nolimits^{p,q}_J\frg^*
=\bigwedge^p\big(\frg_J^{1,0}\big)\otimes\bigwedge^q\big(\frg_J^{0,1}\big)$. It is well-known that the integrability condition for $J$ is equivalent to the (extended) 
differential $d$ of~$\frg$ to satisfy
\begin{equation}\label{Nijenhuis-nulo}
d\big(\frg_J^{1,0}\big)\subseteq \bigwedge\nolimits^{2,0}_J\frg^*\bigoplus\bigwedge\nolimits^{1,1}_J\frg^*.
\end{equation}
We will refer to the differentials of any basis $\{\omega^k\}_{k=1}^n$ of $\frg^{1,0}_J$ as the \emph{complex structure equations} of $(\frg,J)$, as those equations completely determine both the NLA $\frg$ and the complex structure $J$. 

The following theorem provides a classification of WnN structures in eight dimensions, up to equivalence.

\begin{theorem}\label{main-theorem}
Let $\frg$ be an $8$-dimensional NLA with a weakly non-nilpotent complex structure $J$. 
Then, there is a basis of $(1,0)$-forms $\{\omega^k\}_{k=1}^4$ in terms of which the complex structure equations of $(\frg,J)$ are 
\begin{equation}\label{structure-eq-WnN}
\left\{
\begin{split}
d\omega^1 &= 0,\\[-4pt]
d\omega^2 &= \omega^{13} + \omega^{1\bar3},\\[-4pt]
d\omega^3 &= i\,\varepsilon\,\omega^{1\bar1} + i\,\delta\,\omega^{1\bar2} 
	- i\,\delta\,\omega^{2\bar1},\\[-4pt]
d\omega^4 &= a\,\omega^{12}+B\,\omega^{1\bar 1} 
	+\nu\,\big(\omega^{23}+2\,\delta\,\varepsilon\,\omega^{1\bar 3}+ \omega^{2\bar 3}\big),
\end{split}
\right.
\end{equation}
where the tuple $(\varepsilon,\delta,\nu,a,B)$ satisfies that $\varepsilon\in\{0,1\}$, $\delta=\pm 1$ and $(\nu,a,B)$ takes one of the following values:
\begin{itemize}
\item $\nu=a=B=0$;
\item $\nu=a=0$, $B=1$;
\item $\nu\in\{0,1\}$, $a=1-\nu$, and $B\in\mathbb R^{\geq 0}$; moreover, $B\in\{0,1\}$ when $\varepsilon=0$;
\item $\nu=1$, $a\in\mathbb R^{> 0}$ and $B\in\mathbb C$; 
furthermore, when $\varepsilon=0$ one has $a=1$ and either $B\in\mathbb R^{\geq 0}$ or  $\Imag B > 0$.
\end{itemize}
Furthermore, different values of $(\varepsilon,\delta,\nu,a,B)$ provide non-equivalent complex structures.
\end{theorem}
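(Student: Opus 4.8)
The plan is to separate the statement into two tasks: producing the normal form \eqref{structure-eq-WnN} with the listed parameter ranges, and showing that distinct tuples $(\varepsilon,\delta,\nu,a,B)$ yield non-equivalent structures. Both tasks hinge on Lemma~\ref{lema1}: the quotient $(\tilde\frg_1,\tilde J_1)=(\frg/\fra_1(J),\tilde J_1)$ is a six-dimensional NLA with a strongly non-nilpotent complex structure, and $\fra_1(J)$ is a two-dimensional $J$-invariant central ideal. I would first invoke the classification of SnN structures in dimension six \cite{UV-SnN} to fix a $(1,0)$-basis $\{\tilde\omega^1,\tilde\omega^2,\tilde\omega^3\}$ of $\tilde\frg_1$ in normal form, singling out among them exactly those that admit an eight-dimensional WnN extension; these are governed by the parameters $\varepsilon\in\{0,1\}$ and $\delta=\pm1$. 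Setting $\omega^j:=\pi^*\tilde\omega^j$ for $j=1,2,3$ along the projection $\pi\colon\frg\to\tilde\frg_1$, and using that $d$ commutes with $\pi^*$ while $\fra_1(J)$ is central (so neither $\omega^4$ nor $\bar\omega^4$ can occur in $d\omega^j$), forces the first three equations of \eqref{structure-eq-WnN} verbatim.

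It then remains to adjoin a fourth $(1,0)$-form $\omega^4$ dual to the new central direction. Since $\fra_1(J)$ is central and $J$-invariant, and by the integrability constraint \eqref{Nijenhuis-nulo}, the differential $d\omega^4$ is a $(2,0)+(1,1)$-form in $\omega^1,\omega^2,\omega^3$ and their conjugates with no $\omega^4,\bar\omega^4$ terms. Imposing $d^2\omega^4=0$ (integrability together with the Jacobi identity) and the genuine WnN condition $\fra_t(J)=\fra_1(J)\neq\frg$ drastically cuts the admissible coefficients. The residual freedom that remains — rescaling $\omega^4$, adding combinations of $\omega^1,\omega^2,\omega^3$ to $\omega^4$, and the automorphisms of the six-dimensional base preserving its normal form — is then spent to bring $d\omega^4$ into the displayed shape $a\,\omega^{12}+B\,\omega^{1\bar1}+\nu(\omega^{23}+2\delta\varepsilon\,\omega^{1\bar3}+\omega^{2\bar3})$, with $(\nu,a,B)$ falling into one of the four families. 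This is the \emph{careful reduction}, and it is where the main work lies: the normalizations of $a$ and of the $(1,1)$-coefficient $B$ depend on how much scaling freedom survives, which in turn depends on whether $\nu$ and $\varepsilon$ vanish. That case-splitting is precisely what produces the asymmetric conditions ``$B\in\{0,1\}$ when $\varepsilon=0$'' and ``$a=1$ and $\Imag B\ge0$ when $\varepsilon=0$''. I expect the bookkeeping of these boundary cases, rather than any single conceptual difficulty, to be the principal obstacle.

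For the non-equivalence I would argue by contradiction. Suppose an equivalence $f\colon\frg\to\frg'$ carries $J$ with tuple $(\varepsilon,\delta,\nu,a,B)$ to $J'$ with tuple $(\varepsilon',\delta',\nu',a',B')$. By Corollary~\ref{equiv-WnN-8D-inicio} the induced SnN structures on the six-dimensional quotients are equivalent, so the six-dimensional invariants pin down $\varepsilon=\varepsilon'$ (and $\delta=\delta'$, up to the residual freedom analyzed below). To separate $(\nu,a,B)$, which live only in $d\omega^4$, I would pass to the dual map $F:=f^*\colon\frg'^{1,0}\to\frg^{1,0}$, a $\mathbb{C}$-linear isomorphism satisfying $F\circ d'=d\circ F$. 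Since $f\big(\fra_1(J)\big)=\fra_1(J')$ by Lemma~\ref{f-iso-sucesion-a-Lema}, $F$ sends $\langle\omega'^1,\omega'^2,\omega'^3\rangle$ onto $\langle\omega^1,\omega^2,\omega^3\rangle$ and maps $\omega'^4\mapsto\lambda\,\omega^4+(\text{combination of }\omega^1,\omega^2,\omega^3)$, so $F$ is block-triangular. Substituting this ansatz into $F\circ d'=d\circ F$ and matching the coefficient of each monomial yields a finite system relating the primed and unprimed parameters; solving it, again splitting according to $\nu=0$ versus $\nu\neq0$ and $\varepsilon=0$ versus $\varepsilon=1$ to account for the residual normalizations, forces $(\nu,a,B)=(\nu',a',B')$. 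Where convenient I would shortcut parts of this by computing coarser invariants — $\dim\frg_k$, $\dim[\frg,\frg]$, or the de Rham and Dolbeault Betti numbers — which already detect $\nu$ and $a$, leaving only the continuous parameter $B$ to be handled by the explicit coefficient comparison family by family.
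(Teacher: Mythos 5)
Your proposal is correct and follows essentially the same three-step route as the paper's proof: construct $(\frg,J)$ as a central extension of its $6$-dimensional SnN quotient in normal form (Lemma~\ref{lema1} together with the classification in \cite{UV-SnN,COUV}), impose integrability and $d^2\omega^4=0$ and then spend the residual basis freedom to normalize $(\nu,a,B)$ with the case-splitting on $\nu$ and $\varepsilon$ (the paper's Propositions~\ref{prop-inicial}, \ref{reduc-nu-0} and~\ref{reduc-nu-1}), and finally prove non-equivalence via Corollary~\ref{equiv-WnN-8D-inicio} plus coefficient matching for the triangular dual isomorphism $F$ (the paper's Lemmas~\ref{lemma1-Fomega} and~\ref{lemma2-Fomega}). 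The only minor deviation is that you deduce the block-triangularity of $F$ directly from Lemma~\ref{f-iso-sucesion-a-Lema} applied to $\fra_1(J)$, whereas the paper derives the full triangular structure computationally from $d\circ F=F\circ d$; both are valid.
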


The proof of this result consists of three steps, each contained in one of the following subsections. In Section~\ref{subsec:construction-WnN}, we make use of Lemma~\ref{lema1} to find the complex structure equations of every $8$-dimensional NLA $\frg$ with a weakly non-nilpotent complex structure~$J$. Although this initial set of equations provides every possible pair $(\frg,J)$ in terms of some parameters, different choices of these parameters might give rise to equivalent complex structures. For this reason, in Section~\ref{subsec:reduction} we reduce the possible values of the parameters. Finally, in Section~\ref{subsec:classification-complex} we prove that our reduction is optimal, in the sense that it provides a complete classification of WnN  structures on $8$-dimensional NLAs up to equivalence.

\subsection{Construction of WnN from SnN}\label{subsec:construction-WnN}

The aim of this section is to parametrize all the 8-dimensional NLAs with weakly non-nilpotent complex structures.

Let $(\mathfrak g,J)$ be an 8-dimensional NLA with a WnN complex structure.
As a consequence of Lemma~\ref{lema1}, we observe that
$\mathfrak g$ can be seen as a central extension of Lie algebras. Indeed, one has the following short exact sequence

\medskip
\centerline{\xymatrix{
0\ar[r] & \fra_1(J)\ar@{^(->}[r]^{\ \ \iota}& \frg\ar[r]^{\pi_1} & \tilde\frg_1\ar[r] & 0,
}}

\noindent
where $\iota$ denotes the inclusion and $\pi_1$ is the natural projection. We also recall that the subspace $\mathfrak a_1(J)\subseteq\mathcal Z(\mathfrak g)$ is $2$-dimensional and $J$-invariant, whereas $\tilde\frg_1$ has a complex structure $\tilde J_1$ induced by $J$.
This observation allows us to provide a first description of the pairs $(\frg,J)$ in terms of their complex structure equations. 

\begin{proposition}\label{prop-inicial}
Let $J$ be a WnN complex structure on an $8$-dimensional NLA~$\frg$. Then, there is a basis of $(1,0)$-forms $\{\eta^k\}_{k=1}^4$ in terms of which the complex structure equations of~$(\frg,J)$ are
\begin{equation}\label{ecus-1}
	\begin{cases}
	d\eta^1=0,\\
	d\eta^2=\eta^{13}+\eta^{1\bar3},\\
	d\eta^3=i\,\varepsilon\,\eta^{1\bar1}+i\,\delta\,\eta^{1\bar2}-i\,\delta\,\eta^{2\bar1},\\
	d\eta^4=A_\eta\,\eta^{12}+B_\eta\,\eta^{1\bar1}
		+\nu\,\big(\eta^{23}+2\,\delta\,\varepsilon\,\eta^{1\bar3}+\eta^{2\bar3}\big),
	\end{cases}
\end{equation}
where $\varepsilon,\nu\in\{0,1\}$, $\delta=\pm1$, and $A_\eta,B_\eta\in\mathbb C$.
\end{proposition}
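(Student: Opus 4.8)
The plan is to realise $(\frg,J)$ as a central extension of the $6$-dimensional SnN algebra $(\tilde\frg_1,\tilde J_1)$ and to transport a known normal form upstairs. By Lemma~\ref{lema1}, $\fra_1(J)$ is a $2$-dimensional, $J$-invariant, central ideal, and $(\tilde\frg_1,\tilde J_1)$ is a $6$-dimensional NLA with SnN complex structure. I would begin by invoking the classification of $6$-dimensional SnN structures \cite{UV-SnN}, which furnishes a basis $\{\mu^1,\mu^2,\mu^3\}$ of $(1,0)$-forms on $\tilde\frg_1$ for which $d\mu^1=0$, $d\mu^2=\mu^{13}+\mu^{1\bar3}$ and $d\mu^3=i\,\varepsilon\,\mu^{1\bar1}+i\,\delta\,\mu^{1\bar2}-i\,\delta\,\mu^{2\bar1}$, with $\varepsilon\in\{0,1\}$ and $\delta=\pm1$. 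Since the dual map $\pi_1^*$ embeds $\tilde\frg^*_{1,\mathbb C}$ isomorphically onto the annihilator of $\fra_1(J)$ and commutes with both $J$ and $d$, setting $\eta^j:=\pi_1^*\mu^j$ for $j=1,2,3$ produces $(1,0)$-forms on $\frg$ with $d\eta^j=\pi_1^*(d\mu^j)$, which yields the first three equations of~\eqref{ecus-1} verbatim.

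Next I would complete $\{\eta^1,\eta^2,\eta^3\}$ to a basis $\{\eta^1,\ldots,\eta^4\}$ of $\frg^{1,0}_J$ and analyse $d\eta^4$. Two structural constraints pin down its shape. Integrability~\eqref{Nijenhuis-nulo} forbids any $(0,2)$-component, while the centrality of $\fra_1(J)=\langle W_4,\bar W_4\rangle$, where $W_4$ is the $(1,0)$-vector dual to $\eta^4$, forces via $d\alpha(X,Y)=-\alpha([X,Y])$ the absence of any term containing $\eta^4$ or $\bar\eta^4$ in every differential. Hence $d\eta^4$ is a form of type $(2,0)+(1,1)$ in $\eta^1,\eta^2,\eta^3$ and their conjugates, so $d\eta^4=\pi_1^*(\sigma)$ for some $2$-form $\sigma$ of type $(2,0)+(1,1)$ on $\tilde\frg_1$. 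Finally, $d^2\eta^4=0$ together with the injectivity of $\pi_1^*$ translates into $d\sigma=0$, so that $\sigma$ is precisely a closed $(2,0)+(1,1)$ cocycle on $(\tilde\frg_1,\tilde J_1)$.

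The heart of the argument is then a linear-algebra computation: determine all closed forms $\sigma=a_1\,\eta^{12}+a_2\,\eta^{13}+a_3\,\eta^{23}+\sum_{j,k}b_{jk}\,\eta^{j\bar k}$ on $\tilde\frg_1$. Imposing $d\sigma=0$ using the first three equations and their conjugates produces a homogeneous linear system in the coefficients, which I would organise by the $(2,1)$-monomials $\eta^{jk\bar l}$ occurring in $d\sigma$. A direct check shows that $\eta^{12}$ and $\eta^{1\bar1}$ are closed and that $d\big(\eta^{23}+2\,\delta\,\varepsilon\,\eta^{1\bar3}+\eta^{2\bar3}\big)=0$, the coefficient $2\,\delta\,\varepsilon$ being exactly what cancels the $\eta^{12\bar1}$, $\eta^{12\bar2}$ and $\eta^{13\bar3}$ contributions. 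I expect the solution space, reduced modulo the $2$-dimensional coboundary space $\langle d\eta^2,d\eta^3\rangle$, which I may subtract off by the admissible change $\eta^4\mapsto\eta^4-c_2\,\eta^2-c_3\,\eta^3$, to be spanned by these three forms, so that $\sigma=A_\eta\,\eta^{12}+B_\eta\,\eta^{1\bar1}+\tilde\nu\,(\eta^{23}+2\,\delta\,\varepsilon\,\eta^{1\bar3}+\eta^{2\bar3})$ with $A_\eta,B_\eta,\tilde\nu\in\mathbb C$. To conclude, if $\tilde\nu\neq0$ I rescale $\eta^4\mapsto\tilde\nu^{-1}\eta^4$ to normalise $\nu=1$, and otherwise $\nu=0$; relabelling $A_\eta,B_\eta$ accordingly gives~\eqref{ecus-1}. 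As all these operations are equivalences of complex structures, Lemma~\ref{f-iso-sucesion-a-Lema} guarantees that the type of $J$ is preserved.

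The main obstacle is the closedness computation for $\sigma$: one must show that no cocycles survive beyond the three building blocks, that is, that the coefficients of $\eta^{2\bar2}$, $\eta^{3\bar3}$, $\eta^{3\bar1}$, $\eta^{3\bar2}$ and the remaining $(1,1)$-monomials are forced either to vanish or to combine into exact forms, and that the $\eta^{13}$ and $\eta^{1\bar3}$ directions enter only through the exact combination $d\eta^2$, hence removable, together with the $2\,\delta\,\varepsilon$-linked term. This bookkeeping is delicate because the constraints depend on $\varepsilon$ and $\delta$ (for instance, when $\varepsilon=0$ the $\eta^{1\bar1}$ term cannot be absorbed into $d\eta^3$, explaining why $B_\eta$ remains free); verifying that the resulting cocycle space is exactly $3$-dimensional modulo coboundaries is the crux of the proof.
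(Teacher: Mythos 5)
Your proposal is correct and follows essentially the same route as the paper: realise $(\frg,J)$ as a central extension of the $6$-dimensional SnN quotient, import the normal form of \cite{UV-SnN} for the first three equations, constrain $d\eta^4$ by centrality, integrability and $d^2=0$, and normalise by the basis change $\eta^4\mapsto\eta^4-c_2\,\eta^2-c_3\,\eta^3$ followed by rescaling. Your cohomological packaging (closed $(2,0)+(1,1)$ forms on $\tilde\frg_1$ modulo the coboundaries $\langle d\eta^2,d\eta^3\rangle$, a space which is indeed $3$-dimensional and spanned by $\eta^{12}$, $\eta^{1\bar1}$ and $\eta^{23}+2\,\delta\,\varepsilon\,\eta^{1\bar3}+\eta^{2\bar3}$) is exactly the explicit computation $d^2\tau^4=0$ carried out in the paper, followed by the change $\eta^4=\tau^4-A_{13}\,\tau^2+i\,\delta\,B_{1\bar2}\,\tau^3$.
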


\begin{proof}
Since $\mathfrak a_1(J)$ is a $2$-dimensional $J$-invariant ideal of $\mathfrak g$, we can take a basis $\{Z_k,\bar Z_k\}_{k=1}^4$ for the complexified algebra $\frg_{\mathbb C}$, with each $Z_k$ of bidegree $(1,0)$ and such that 
$$
\mathfrak a_1(J)_{\mathbb C}=\langle Z_4, \bar Z_4\rangle, \qquad
\big(\tilde\frg_1\big)_{\mathbb C}=\big\langle \tilde Z_k, \tilde{\bar Z}_k \big\rangle_{k=1}^3,
$$
where $\tilde Z_k$ and $\tilde{\bar Z}_k\,(=\overline{\tilde Z}_k)$ are, respectively, the classes of $Z_k$ and $\bar Z_k$ in $\big(\tilde\frg_1\big)_{\mathbb C}$. 
Moreover, $\mathfrak a_1(J)\subseteq\mathcal Z(\frg)$ implies that the  bracket of~$\frg_{\mathbb C}$ can be expressed as 
\begin{equation}\label{corchetes-extension}
\begin{split}
[Z_4,U]_{\frg_{\mathbb C}} &= [\bar Z_4,U]_{\frg_{\mathbb C}}=0, \text{ for any }U\in \frg_{\mathbb C}=\langle Z_k,\bar Z_k\rangle_{k=1}^4,\\
[V,W]_{\frg_{\mathbb C}} &= [\tilde V,\tilde W]_{(\tilde\frg_1)_{\mathbb C}} + \theta(V,W), \text{ for }V, W\in\frh:=\langle Z_k, \bar Z_k\rangle_{k=1}^3,
\end{split}
\end{equation}
for some $\mathbb C$-bilinear map $\theta:\mathfrak h\times\mathfrak h\to\fra_1(J)_{\mathbb C}$.
Note that $\theta$ must satisfy the necessary conditions to define both a Lie bracket (see~\cite[Sections~2 and~3]{SS}) and a well-defined complex structure (see, for instance,~\cite[Theorem 2.8, Chapter IX]{KN}).

By Lemma~\ref{lema1}, we know that $(\tilde\frg_1,\tilde J_1)$ is a $6$-dimensional NLA with 
an SnN complex structure.
Applying~\cite[Proposition~2.3]{UV-SnN} (see also~\cite{COUV}), 
we can choose $\{Z_1, Z_2,  Z_3\}$ so that $\{\tilde Z_1,\tilde Z_2, \tilde Z_3\}$ 
satisfy
\begin{equation}\label{bracket-FamilyIII}
\begin{split}
[\tilde Z_1,\tilde Z_3]_{(\tilde\frg_1)_{\mathbb C}} & =
[\tilde Z_1,\overline{\tilde Z}_3]_{(\tilde\frg_1)_{\mathbb C}} = -\tilde Z_2, \\
[\tilde Z_1,\overline{\tilde Z}_1]_{(\tilde\frg_1)_{\mathbb C}} & = -i\varepsilon (\tilde Z_3+\overline{\tilde Z}_3), \quad
[\tilde Z_1,\overline{\tilde Z}_2]_{(\tilde\frg_1)_{\mathbb C}} = -i\delta (\tilde Z_3 - \overline{\tilde Z}_3),
\end{split}
\end{equation} 
together with the corresponding conjugates, where $\varepsilon\in\{0,1\}$ and $\delta=\pm 1$. Let $\{\tau^k\}_{k=1}^4$ be the basis of $(1,0)$-forms dual to $\{Z^k\}_{k=1}^4$. Then, by~\eqref{corchetes-extension}, \eqref{bracket-FamilyIII} and the formula \eqref{rel-diferencial-corchete}, extended to the complexification, 
we have that the complex structure equations of $(\frg,J)$ in this basis become
$$
d\tau^1=0,\quad
	d\tau^2=\tau^{13}+\tau^{1\bar3},\quad
	d\tau^3=i\,\varepsilon\,\tau^{1\bar1}+i\,\delta\,\tau^{1\bar2} - i\,\delta\,\tau^{2\bar1}, \quad \varepsilon\in\{0,1\},\ \  \delta=\pm 1,
$$
and
\begin{equation*}
	\begin{split}
		d\tau^4 = &\ A_{12}\,\tau^{12}+A_{13}\,\tau^{13}+A_{23}\,\tau^{23}
		+B_{1\bar1}\,\tau^{1\bar 1}+B_{1\bar2}\,\tau^{1\bar 2}+B_{1\bar3}\,\tau^{1\bar 3} \\
		& +B_{2\bar1}\,\tau^{2\bar 1}+B_{2\bar2}\,\tau^{2\bar 2}+B_{2\bar3}\,\tau^{2\bar 3}
		+B_{3\bar1}\,\tau^{3\bar 1}+B_{3\bar2}\,\tau^{3\bar 2}+B_{3\bar3}\,\tau^{3\bar 3},
	\end{split}
\end{equation*}
for some $A_{ij}, B_{r\bar s}\in\mathbb C$, with $1\leq i<j\leq 3$ and $r,s\in\{1,2,3\}$, satisfying $d^2\tau^4=0$. 
Note that $d^2\tau^k=0$ holds for $1\leq k\leq 3$, so the Jacobi identity (which is equivalent to $d^2=0$) is satisfied if and only if $d^2\tau^4=0$. Note also that the above equations agree with \eqref{Nijenhuis-nulo}.

We now compute
\begin{equation*}
\begin{split}
d^2\tau^4 &=
	i\,\big(\delta\,(A_{13}-B_{1\bar3})+\varepsilon\,(A_{23}+B_{2\bar3})\big)\,\tau^{12\bar1}
	+i\,\delta\,(A_{23}-B_{2\bar3})\,\tau^{12\bar2}\\
&+(B_{1\bar2}+B_{2\bar1}+i\,\varepsilon\,B_{3\bar3})\,\tau^{13\bar1}
+(B_{2\bar2}-i\,\delta\,B_{3\bar3})\,\tau^{13\bar2}
-(A_{23}-B_{2\bar3})\,\tau^{13\bar3}\\
&-i\,(\delta\,B_{3\bar1}-\varepsilon\,B_{3\bar2})\,\tau^{1\bar1\bar2}
-(B_{1\bar2}+B_{2\bar1}-i\,\varepsilon\,B_{3\bar3})\,\tau^{1\bar1\bar3}
-(B_{2\bar2}-i\,\delta\,B_{3\bar3})\,\tau^{1\bar2\bar3}\\
&+(B_{2\bar2}+i\,\delta\,B_{3\bar3})\,\tau^{23\bar1}
-i\,\delta\,B_{3\bar2}\,\tau^{2\bar1\bar2}
-(B_{2\bar2}+i\,\delta\,B_{3\bar3})\,\tau^{2\bar1\bar3}
-B_{3\bar2}\,\tau^{3\bar1\bar3}.
\end{split}
\end{equation*}
It is straightforward to check that $d^2\tau^4=0$ is equivalent to 
the following conditions: 
$$B_{2\bar2}=B_{3\bar1}=B_{3\bar2}=B_{3\bar3}=0, \quad B_{2\bar3}=A_{23},$$
and 
$$B_{1\bar3}=A_{13}+2\,\delta\,\varepsilon\,A_{23}, \quad B_{2\bar1}=-B_{1\bar2}.$$
Therefore, the differential of $\tau^4$ becomes
\begin{equation*}
\begin{split}
d\tau^4  = & \ A_{12}\,\tau^{12}+A_{13}\,\tau^{13}+A_{23}\,\tau^{23} \\
&+B_{1\bar1}\,\tau^{1\bar 1}+B_{1\bar2}\,\tau^{1\bar 2}+(A_{13}+2\,\delta\,\varepsilon\,A_{23})\,\tau^{1\bar 3}-B_{1\bar2}\,\tau^{2\bar 1}+A_{23}\,\tau^{2\bar 3}.
\end{split}
\end{equation*}

We next distinguish two cases.
Suppose first that $A_{23}=0$. One can then define a new $(1,0)$-basis for $(\mathfrak g,J)$ as follows:
$$\eta^1=\tau^1,\quad \eta^2=\tau^2,\quad \eta^3=\tau^3,\quad
\eta^4=\tau^4-A_{13}\,\tau^2+i\,\delta\,B_{1\bar2}\,\tau^3.$$
It is easy to check that the complex structure equations in this case are~\eqref{ecus-1} with
$A_\eta=A_{12}$, $B_\eta=B_{1\bar1}-\varepsilon\,\delta\,B_{1\bar2}$ and $\nu=0$.

We now suppose that $A_{23}\neq 0$. The basis of $(1,0)$-forms that gives the structure equations~\eqref{ecus-1}  for $(\mathfrak g,J)$ is
$$\eta^1=\tau^1,\quad \eta^2=\eta^2,\quad \eta^3=\tau^3,\quad
\eta^4=\frac{1}{A_{23}}\, (\tau^4-A_{13}\,\tau^2+i\,\delta\,B_{1\bar2}\,\tau^3),$$
with coefficients
$A_\eta=A_{12}/A_{23}$, $B_\eta=(B_{1\bar1}-\varepsilon\,\delta\,B_{1\bar2})/A_{23}$ and $\nu=1$.
\end{proof}

The previous proposition provides, by construction, the complex structure equations of all the existent $8$-dimensional NLAs with WnN complex structures. However, different choices of the parameters $\varepsilon,\delta,\nu,A_{\eta},B_{\eta}$ might give rise to equivalent complex structures, in the sense of Definition~\ref{def-equivalence}.

\subsection{Reduction of the complex structure equations}\label{subsec:reduction}

The aim here is to reduce the set of possible values that the parameters $A_{\eta},B_{\eta}\in\mathbb C$ in Proposition~\ref{prop-inicial} can take. 
To do so, we separately study the cases $\nu=0$ and $\nu=1$ in order to define an appropriate $(1,0)$-frame for $(\frg,J)$.

\begin{proposition}\label{reduc-nu-0}
Let $(\frg,J)$ be an $8$-dimensional NLA with a WnN complex structure given by~\eqref{ecus-1} with $\nu=0$. Then, there is a $(1,0)$-basis $\{\omega^k\}_{k=1}^4$ such that the structure equations of  
$(\frg,J)$ are~\eqref{structure-eq-WnN} with $\delta=\pm 1$, $\nu=0$ and $(\varepsilon,a,B)$ one of the following:
\begin{itemize}
		\item[i)] $a=B=0$ and $\varepsilon\in\{0,1\}$,
		\item[ii)] $a=0$, $B=1$ and $\varepsilon\in\{0,1\}$,
		\item[iii)] $a=1$, $B\in\{0,1\}$ and $\varepsilon=0$,
		\item[iv)] $a=1$, $B\in[0,\infty)$ and $\varepsilon=1$.
\end{itemize}
\end{proposition}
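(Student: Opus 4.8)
The plan is to normalize the parameters $A_\eta,B_\eta$ by exploiting the changes of $(1,0)$-basis that preserve both the normal form of the first three equations in \eqref{ecus-1} and the shape of $d\eta^4$ (in particular, those keeping $\nu=0$ and $d\omega^4$ of the form $a\,\omega^{12}+B\,\omega^{1\bar 1}$). My first task is therefore to pin down this group of admissible transformations. Since $\varepsilon$ and $\delta$ are invariants of the SnN quotient $(\tilde\frg_1,\tilde J_1)$ (kept fixed by Corollary~\ref{equiv-WnN-8D-inicio} together with the $6$-dimensional classification), only the automorphisms of that quotient, composed with a rescaling of the central line $\fra_1(J)=\langle Z_4,\bar Z_4\rangle$, are available. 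Closedness of $\omega^1$ forces $\omega^1=\mu_1\,\eta^1$; matching $d\omega^2=\omega^{13}+\omega^{1\bar 3}$ then forces $\omega^3=\mu_3\,\eta^3$ with $\mu_3\in\mathbb R^{*}$ and $\omega^2=\mu_1\mu_3\,\eta^2$ up to a closed correction; and comparing the coefficients of $d\omega^3$ with \eqref{ecus-1} yields $|\mu_1|=1$ in all cases, together with the extra collapse $\mu_3=1$ exactly when $\varepsilon=1$. On the fiber one has $\omega^4=\lambda\,\eta^4$ up to a closed correction, with $\lambda\in\mathbb C^{*}$; adding any multiple of $\eta^2$ or $\eta^3$ to $\omega^4$ would reintroduce forbidden monomials and is excluded, while the closed corrections (multiples of $\eta^1$) are immaterial, and $\delta$ is carried along unchanged.

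Next I would compute the induced action on $(A_\eta,B_\eta)$. Using $\omega^{12}=\mu_1^2\mu_3\,\eta^{12}$ and $\omega^{1\bar 1}=|\mu_1|^2\,\eta^{1\bar 1}=\eta^{1\bar 1}$, the equation $d\omega^4=\lambda\,d\eta^4$ rewrites as $d\omega^4=a\,\omega^{12}+B\,\omega^{1\bar 1}$ with $a=\lambda\,\mu_1^{-2}\mu_3^{-1}A_\eta$ and $B=\lambda\,B_\eta$. The decisive feature is the dichotomy in $\varepsilon$: for $\varepsilon=0$ the parameters $\lambda\in\mathbb C^{*}$, the phase $\mu_1$ and the real scaling $\mu_3\in\mathbb R^{*}$ are all free, whereas for $\varepsilon=1$ one loses $\mu_3$ and is left only with $\lambda$ and the phase $\mu_1$.

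Finally I would run the normalization case by case. If $A_\eta=B_\eta=0$ there is nothing to do (item~i)); if $A_\eta=0\neq B_\eta$, the choice $\lambda=B_\eta^{-1}$ gives $a=0$, $B=1$ (item~ii)). If $A_\eta\neq0$, I normalize $a=1$: when $\varepsilon=0$ the joint freedom in $\lambda,\mu_1,\mu_3$ sends $B$ to $0$ or to $1$ according as $B_\eta$ vanishes or not (item~iii)), while when $\varepsilon=1$ fixing $a=1$ determines $\lambda=\mu_1^{2}/A_\eta$ and leaves $B=\mu_1^{2}\,B_\eta/A_\eta$, whose phase, but not modulus, can be removed by the choice of the unit-modulus $\mu_1$, giving $B=|B_\eta/A_\eta|\in[0,\infty)$ (item~iv)). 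I expect the main obstacle to be the precise identification of the admissible group, and in particular establishing the reality of $\mu_3$, the constraint $|\mu_1|=1$, and the collapse $\mu_3=1$ for $\varepsilon=1$: it is exactly this loss of the real scaling that separates the finite normalization $B\in\{0,1\}$ of item~iii) from the continuous family $B\in[0,\infty)$ of item~iv), and verifying it requires careful bookkeeping of the monomials produced in $d\omega^3$.
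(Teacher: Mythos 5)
Your proposal is correct: the constraints you derive on the admissible diagonal transformations, and the normalizations you perform with them, are exactly right, and every case lands on the stated values of $(\varepsilon,a,B)$. The route, however, is organized differently from the paper's. The paper's own proof of Proposition~\ref{reduc-nu-0} is purely constructive: it writes $A_\eta=a\,e^{i\alpha}$, $B_\eta=b\,e^{i\beta}$ in polar form and simply exhibits explicit new $(1,0)$-bases (a common phase on $\eta^1,\eta^2$, a rescaling of $\eta^4$, plus one extra real rescaling of $\eta^2,\eta^3,\eta^4$ available only when $\varepsilon=0$), checking directly that the resulting equations are~\eqref{structure-eq-WnN} with the claimed parameters; no description of the full transformation group is attempted at this stage. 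You instead determine the admissible group first — closedness forces $\omega^1=\mu_1\eta^1$, matching $d\omega^2$ forces $\mu_3\in\mathbb{R}^*$ and $\omega^2=\mu_1\mu_3\eta^2$, matching $d\omega^3$ forces $|\mu_1|=1$ and $\varepsilon(1-\mu_3)=0$ — and then solve the normalization inside it. These constraints are precisely the ones the paper only establishes later, in Lemma~\ref{lemma2-Fomega} ($\lambda^1_1=e^{i\theta}$, $\lambda^3_3=\lambda\in\mathbb{R}^*$, $\varepsilon(1-\lambda)=0$), where they are needed for the \emph{non-equivalence} half of Theorem~\ref{main-theorem}. So your approach does more work than is strictly necessary for the reduction (only the existence of normalizing transformations is needed, not their exhaustiveness), but it buys a conceptual explanation the paper's reduction proof does not make explicit: the dichotomy between the finite normalization $B\in\{0,1\}$ in item~iii) and the continuous modulus $B\in[0,\infty)$ in item~iv) is exactly the loss of the real scaling $\mu_3$ when $\varepsilon=1$; and it effectively pre-packages the computations that the paper will reuse for the classification up to equivalence.
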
  
\begin{proof}
By hypothesis we have $\nu=0$, so we need to focus on the rest of parameters that appear in the complex structure equations~\eqref{ecus-1}, namely, $\varepsilon\in\{0,1\}$, $\delta=\pm1$, and $A_\eta,B_\eta\in\mathbb C$. In particular, it suffices to distinguish three cases depending on the value of the pair $(A_\eta,B_\eta)$: $A_\eta=B_\eta=0$, $A_\eta=0$ with $B_\eta\neq0$, and $A_\nu\neq0$.

Let us first consider the case $A_\eta=B_\eta=0$. By renaming the basis $\{\eta\}_{k=1}^4$ as $\{\omega\}_{k=1}^4$ one directly gets part i) of the proposition.

Suppose now that the structure equations~\eqref{ecus-1} satisfy $\nu=0$, $A_\eta=0$ and $B_\eta\neq 0$. We define  
$\omega^k=\eta^k$ for $1\leq k\leq 3$ and $\omega^4=\frac{1}{B_\eta}\,\eta^4$. 
A direct computation shows that the differentials of these elements follow~\eqref{structure-eq-WnN} with parameters given by ii).

Finally, let us assume that $\nu=0$ and $A_\eta\neq 0$ in~\eqref{ecus-1}. We write the complex numbers $A_\eta,B_\eta$ in polar form, namely, $A_\eta=a\,e^{i\alpha}$, $B_\eta=b\,e^{i\beta}$, where $a>0$, $b\geq 0$, $\alpha,\,\beta\in[0,2\pi)$. One can then define
$$\omega^1=e^{-\frac{i}{2}\,(\beta-\alpha)}\,\eta^1, \quad
\omega^2=e^{-\frac{i}{2}\,(\beta-\alpha)}\,\eta^2, \quad
\omega^3=\eta^3, \quad
\omega^4=\frac{1}{a}\,e^{-i\beta}\,\eta^4.$$
The structure equations in terms of this basis coincide with~\eqref{structure-eq-WnN}, with coefficients 
$$\nu=0,\quad a=1, \quad B=\frac{b}{a}\geq 0.$$
If $\varepsilon=1$, one directly gets iv). 
If $\varepsilon=0$ and $B_\eta=0$, one obtains part iii) of the statement with $B=0$. If $\varepsilon=0$ and 
$B_\eta\neq 0$, we get iii) with $B=1$ after defining a new $(1,0)$-basis from $\{\omega^k\}_{k=1}^4$ as follows
$$\tilde\omega^1=\omega^1,\quad \tilde\omega^2=\frac{a}{b}\,\omega^2, \quad \tilde\omega^3=\frac{a}{b}\,\omega^3,\quad \tilde\omega^4=\frac{a}{b}\,\omega^4,$$
then omitting the tildes.
\end{proof}

\begin{proposition}\label{reduc-nu-1}
Let $(\frg,J)$ be an $8$-dimensional NLA with a WnN complex structure given by~\eqref{ecus-1} with $\nu=1$. Then, there is a $(1,0)$-basis $\{\omega^k\}_{k=1}^4$ such that the structure equations of  
$(\frg,J)$ are~\eqref{structure-eq-WnN} with $\delta=\pm 1$, $\nu=1$ and $(\varepsilon,a,B)$ one of the following:
	\begin{itemize}
		\item[i)] $a=0$, $B\in\{0,1\}$ and $\varepsilon=0$,
		\item[ii)] $a=0$, $B\in[0,\infty)$ and $\varepsilon=1$,
		\item[iii)] $a=1$, $B\in[0,\infty)$ and $\varepsilon=0$,
		\item[iv)] $a=1$, $B\in\mathbb C$ with $\Imag B> 0$ and $\varepsilon=0$,
		\item[v)] $a\in(0,\infty)$, $B\in\mathbb C$ and $\varepsilon=1$.
	\end{itemize}
\end{proposition}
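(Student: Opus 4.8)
The plan is to reduce the complex structure equations \eqref{ecus-1} with $\nu=1$ to the normal form \eqref{structure-eq-WnN} by a diagonal change of $(1,0)$-basis $\omega^k=\lambda_k\,\eta^k$, $k=1,\dots,4$, exactly in the spirit of Proposition~\ref{reduc-nu-0}. The essential new feature compared with the case $\nu=0$ is that the block $\eta^{23}+2\delta\varepsilon\,\eta^{1\bar3}+\eta^{2\bar3}$ in $d\eta^4$ must be preserved with coefficient $\nu=1$; this ties the scaling $\lambda_4$ of $\omega^4$ to the scalings $\lambda_2,\lambda_3$, and is ultimately what prevents a complete normalization of $a$ and $B$ when $\varepsilon=1$.

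First I would pin down which diagonal transformations preserve the first three equations. Imposing $d\omega^2=\omega^{13}+\omega^{1\bar3}$ forces $\lambda_3\in\mathbb R$ and $\lambda_2=\lambda_1\lambda_3$; then imposing $d\omega^3=i\varepsilon\,\omega^{1\bar1}+i\delta\,\omega^{1\bar2}-i\delta\,\omega^{2\bar1}$ with the same $\varepsilon,\delta$ forces $|\lambda_1|=1$ (so that the coefficient $\delta/|\lambda_1|^2$ remains $\pm1$, hence $\delta$ is unchanged) and, when $\varepsilon=1$, additionally $\lambda_3=1$, since the $\omega^{1\bar1}$–coefficient becomes $\varepsilon\lambda_3$; when $\varepsilon=0$ the nonzero real scalar $\lambda_3$ stays free. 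Next, requiring that the $\nu$-block of $d\omega^4$ keep coefficient $1$ forces $\lambda_4=\lambda_2\lambda_3$, after which the $\omega^{2\bar3}$ and $\omega^{1\bar3}$ coefficients come out automatically correct because $\lambda_3$ is real and $\lambda_2=\lambda_1\lambda_3$. With these choices one reads off $a=\lambda_3\,\bar\lambda_1\,A_\eta$ and $B=\lambda_1\,\lambda_3^2\,B_\eta$.

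The argument then splits according to $\varepsilon\in\{0,1\}$ and to whether $A_\eta$ vanishes. Writing $\lambda_1=e^{i\phi}$, when $\varepsilon=0$ the free real $\lambda_3$ allows rescaling of magnitudes: if $A_\eta=0$ then $a=0$ and I choose $\phi$ and $|\lambda_3|$ to send $B_\eta$ to $B\in\{0,1\}$ (case i); if $A_\eta\neq0$ I fix $a=1$ and exploit the residual sign of $\lambda_3$ — which replaces $B$ by $-B$ — to arrange $\Imag B\geq0$ (case iii). When $\varepsilon=1$ the constraint $\lambda_3=1$ leaves only the phase $\phi$: if $A_\eta=0$ I rotate $B_\eta$ into $B\in[0,\infty)$ (case ii), and if $A_\eta\neq0$ I rotate $A_\eta$ so that $a=|A_\eta|\in(0,\infty)$, leaving $B\in\mathbb C$ arbitrary (case iv).

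The main obstacle is less a difficulty than a piece of careful bookkeeping: tracking the loss of scaling freedom when $\varepsilon=1$. Because $\lambda_3=1$ is then forced, only rotations remain available, which is exactly why cases ii) and iv) retain a continuous parameter ($B\in[0,\infty)$, respectively $a\in(0,\infty)$) instead of a discrete normalization, in contrast with cases i) and iii). The one nonobvious move is the use of the sign of $\lambda_3$ in case iii) to fix the sign of $\Imag B$. I would stress that this proposition only establishes that every $(\frg,J)$ with $\nu=1$ can be brought to one of the forms i)--iv); the optimality of this list — that distinct parameter values yield inequivalent complex structures — is deferred to Section~\ref{subsec:classification-complex}.
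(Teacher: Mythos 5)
Your proposal is correct and follows essentially the same route as the paper: the paper's proof also uses diagonal changes of $(1,0)$-basis with the same case split on $(A_\eta,\varepsilon)$, exhibiting for each case an explicit transformation $\omega^k=\lambda_k\eta^k$ satisfying exactly your constraints $\lambda_2=\lambda_1\lambda_3$, $\lambda_4=\lambda_2\lambda_3$, $|\lambda_1|=1$, $\lambda_3\in\mathbb R^*$ (with $\lambda_3=1$ forced when $\varepsilon=1$), including the sign trick $\sigma=\pm1$ in case iii) to achieve $\Imag B\geq 0$. The only difference is organizational: you derive the general constraints on admissible diagonal transformations once and then specialize, whereas the paper writes down the four explicit bases and verifies them directly.
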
  
\begin{proof}
As $\nu=1$, we will focus on the remaining parameters involved in the structure equations~\eqref{ecus-1}, namely, $\varepsilon\in\{0,1\}$, $\delta=\pm1$, and $A_\eta,B_\eta\in\mathbb C$. 
We distinguish four cases depending on the value of the pair $(A_\eta,\varepsilon)$: $A_\eta=\varepsilon=0$, $A_\eta=0$ with $\varepsilon=1$, $A_\eta\neq0$ with $\varepsilon=0$, and 
$A_\eta\neq0$ with $\varepsilon=1$.

Suppose that $A_\eta=\varepsilon=0$. If one also has $B_\eta=0$ in~\eqref{ecus-1}, part i) of the statement with $B=0$ is directly attained by simply renaming the basis. If $B_\eta\neq 0$, we write
$B_\eta=b\,e^{i\beta}$, with $b>0$ and $\beta\in[0,2\pi)$, and define
$$\omega^1=e^{-i\beta}\,\eta^1,\quad
\omega^2=\frac{1}{\sqrt b}\,e^{-i\beta}\,\eta^2, \quad
\omega^3=\frac{1}{\sqrt b}\,\eta^3,\quad
\omega^4=\frac{1}{b}\,e^{-i\beta}\,\eta^4.$$
This new basis satisfies~\eqref{structure-eq-WnN} with parameters given by i) with $B=1$. 

We now assume $A_\eta=0$ and $\varepsilon=1$ in~\eqref{ecus-1}. Writing $B_\eta=b\,e^{i\beta}$ as before, we can set the new basis of $(1,0)$-forms as
$$\omega^1=e^{-i\,\beta}\,\eta^1,\quad
\omega^2=e^{-i\,\beta}\,\eta^2,\quad
\omega^3=\eta^3,\quad
\omega^4=e^{-i\,\beta}\,\eta^4.$$
The structure equations of $(\frg,J)$ in terms of this basis satisfy ii). 

Let us suppose that the structure equations~\eqref{ecus-1} with $\nu=1$ have $A_\eta\neq 0$ and $\varepsilon=0$. Then, we can write $A_\eta=a\,e^{i\alpha}$ with $a>0$ and $\alpha\in[0,2\pi)$. 
We consider two cases depending on the vanishing of the imaginary part of $B_\eta\,e^{i\alpha}$.  

Firstly, if $\Imag(B_\eta\,e^{i\alpha})= 0$ then
the basis
$$
\omega^1= e^{i\alpha}\,\eta^1, \quad
\omega^2=\frac{e^{i\alpha}}{a}\,\eta^2, \quad
\omega^3=\frac{1}{a}\,\eta^3,\quad
\omega^4=\frac{e^{i\alpha}}{a^2}\,\eta^4,
$$
reduces the structure equations to~\eqref{structure-eq-WnN} with coefficients 
$\nu=a=1$, $B=\frac{1}{a^2}\,B_\eta\,e^{i\alpha}$, and $\varepsilon=0$. In particular, observe that $B\in \mathbb R$. If $B\in[0,\infty)$, one directly gets part iii) of the statement. Otherwise, one can further apply the change of basis $\tilde{\omega}^k= -\omega^k$, for $k=1,3,4$, and $\tilde{\omega}^2= \omega^2$ to get again iii) by simply omitting the tildes.

Secondly, if $\Imag(B_\eta\,e^{i\alpha}) \not= 0$, then
we set $\sigma=1$ when $\Imag(B_\eta\,e^{i\alpha})> 0$, and $\sigma=-1$ otherwise. If we consider the basis
$$
\omega^1=\sigma\,e^{i\alpha}\,\eta^1, \quad
\omega^2=\frac{1}{a}\,e^{i\alpha}\,\eta^2, \quad
\omega^3=\frac{\sigma}{a}\,\eta^3,\quad
\omega^4=\frac{\sigma}{a^2}\,e^{i\alpha}\,\eta^4,
$$
then the corresponding structure equations are~\eqref{structure-eq-WnN} with coefficients 
$$\nu=1,\quad a=1, \quad B=\frac{1}{\sigma\,a^2}\,B_\eta\,e^{i\alpha}, \quad \varepsilon=0.$$
Observe that these parameters satisfy iv). 

Finally, we consider the values $A_\eta\neq 0$ and $\varepsilon=1$ in~\eqref{ecus-1} with $\nu=1$. If we write $A_\eta=a\,e^{i\alpha}$ as before and then set
$$
	\omega^1=e^{i\alpha}\,\eta^1,\quad
	\omega^2=e^{i\alpha}\,\eta^2,\quad
	\omega^3=\eta^3,\quad
	\omega^4=e^{i\alpha}\,\eta^4,
$$
part v)  of the statement is attained. 
\end{proof}

Observe that the combination of Propositions~\ref{reduc-nu-0} and~\ref{reduc-nu-1} provides the first part of Theorem~\ref{main-theorem}. It remains to prove the non-equivalence statement.

\subsection{Classification of complex structures up to equivalence}\label{subsec:classification-complex}

We here complete the proof of Theorem~\ref{main-theorem}, thus showing that the reduction of parameters accomplished in the previous section is optimal (in the sense that it is not possible to find new equivalences among the corresponding complex structures).

Let $(\frg,J)$ and $(\frg',J')$ be two $8$-dimensional NLAs with WnN complex structures. 
Due to the results in Section~\ref{subsec:reduction},
there are $(1,0)$-frames $\{\omega^k\}_{k=1}^4$  for $(\frg,J)$
and $\{\omega'^{\,k}\}_{k=1}^4$ for $(\frg',J')$
in terms of which the complex structure equations of $(\frg,J)$ and $(\frg',J')$ are given by~\eqref{structure-eq-WnN} with parameters $(\varepsilon,\delta,\nu,a,B)$ and $(\varepsilon',\delta',\nu',a',B')$, respectively. Recall that the values of these tuples must follow Proposition~\ref{reduc-nu-0} or Proposition~\ref{reduc-nu-1}.  A direct consequence of Corollary~\ref{equiv-WnN-8D-inicio} is the following one:

\begin{corollary}\label{corolario1}
In the conditions above, if the complex structures are equivalent, then $\varepsilon'=\varepsilon$ and $\delta'=\delta$.
\end{corollary}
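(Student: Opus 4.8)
The plan is to exploit Corollary~\ref{equiv-WnN-8D-inicio}, which tells us that any equivalence $f\colon(\frg,J)\to(\frg',J')$ of WnN structures descends to an equivalence $\tilde f_1$ between the induced SnN structures $\tilde J_1$ and $\tilde J'_1$ on the $6$-dimensional quotients $\tilde\frg_1$ and $\tilde\frg'_1$. Since the parameters $\varepsilon$ and $\delta$ are precisely the invariants appearing in the complex structure equations of the quotient — the first three equations of~\eqref{structure-eq-WnN}, involving only $\omega^1,\omega^2,\omega^3$, are exactly the structure equations of $(\tilde\frg_1,\tilde J_1)$ under the identification $\tilde\omega^k=\pi_1^*(\omega^k)$ — the claim reduces entirely to a statement about $6$-dimensional SnN structures, namely that $(\varepsilon,\delta)$ is an equivalence invariant of the family~\eqref{bracket-FamilyIII}.

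The first step I would carry out is to make the descent explicit: verify that the dual basis $\{\omega^1,\omega^2,\omega^3\}$ pushes down to a $(1,0)$-coframe $\{\tilde\omega^1,\tilde\omega^2,\tilde\omega^3\}$ of $(\tilde\frg_1,\tilde J_1)$ satisfying
\begin{equation*}
d\tilde\omega^1 = 0,\quad
d\tilde\omega^2 = \tilde\omega^{13}+\tilde\omega^{1\bar3},\quad
d\tilde\omega^3 = i\,\varepsilon\,\tilde\omega^{1\bar1}+i\,\delta\,\tilde\omega^{1\bar2}-i\,\delta\,\tilde\omega^{2\bar1},
\end{equation*}
and likewise for the primed data with $(\varepsilon',\delta')$. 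This follows because $\fra_1(J)=\langle \omega^4,\bar\omega^4\rangle$ in the dual picture, so the quotient differential simply deletes the $\omega^4$-equation and leaves the first three untouched. The second step is to invoke the already-established classification of $6$-dimensional SnN complex structures (the normal form~\eqref{bracket-FamilyIII} coming from~\cite[Proposition~2.3]{UV-SnN}, see also~\cite{COUV}): in that classification $\varepsilon\in\{0,1\}$ and $\delta=\pm1$ label distinct equivalence classes, so $\tilde J_1\sim\tilde J'_1$ forces $\varepsilon'=\varepsilon$ and $\delta'=\delta$.

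The main obstacle I anticipate is the invariance of $\delta$ under the sign ambiguity $\delta=\pm1$: one must check that no admissible change of $(1,0)$-basis of the $6$-dimensional quotient can flip the sign of $\delta$ while preserving the normal form~\eqref{bracket-FamilyIII}. A clean way to settle this intrinsically — rather than leaning on the external classification — is to read $\varepsilon$ and $\delta$ off genuinely basis-independent data of $(\tilde\frg_1,\tilde J_1)$. For instance, $\varepsilon$ distinguishes the two cases by whether a certain $(1,1)$-component of $d\tilde\omega^3$ (equivalently, the Nijenhuis-type term $\tilde\omega^{1\bar1}$) vanishes, which is determined by $\dim_{\mathbb C}\fra_1(\tilde J_1)$ or by the (J-invariant) commutator structure; and $\delta$ is detected by the orientation/sign of the nondegenerate pairing that $d\tilde\omega^3$ induces between the $\langle\tilde\omega^1\rangle$ and $\langle\tilde\omega^2\rangle$ directions. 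Since Lemma~\ref{f-iso-sucesion-a-Lema} guarantees $\tilde f_1$ respects the whole $\tilde J_1$-compatible series, and an equivalence of Lie algebras preserves all such intrinsic quantities, the equalities $\varepsilon'=\varepsilon$ and $\delta'=\delta$ follow. I expect the bulk of the verification to be the routine but careful bookkeeping confirming that the descent to the quotient genuinely reproduces the SnN normal form with the \emph{same} $(\varepsilon,\delta)$, after which Corollary~\ref{equiv-WnN-8D-inicio} closes the argument immediately.
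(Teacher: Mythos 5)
Your proposal is correct and takes essentially the same route as the paper's own proof: descend the equivalence to the $6$-dimensional quotients via Corollary~\ref{equiv-WnN-8D-inicio}, observe that $\fra_1(J)$ is spanned by the real and imaginary parts of $Z_4$ (which the paper justifies by explicitly computing $\mathcal Z(\frg)$, so that the quotient structure equations are exactly the first three equations of~\eqref{structure-eq-WnN}), and then invoke the classification of $6$-dimensional SnN structures from \cite{COUV} (see also \cite[Proposition~2.3]{UV-SnN}) to conclude $(\varepsilon',\delta')=(\varepsilon,\delta)$. Your third paragraph's ``intrinsic'' alternative is not needed, since the cited classification already establishes that distinct pairs $(\varepsilon,\delta)$ give non-equivalent structures.
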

\begin{proof}
We note first that from the complex structure equations~\eqref{structure-eq-WnN}, their conjugates, and formula~\eqref{rel-diferencial-corchete},  
one computes
$$\mathcal Z(\mathfrak g)=
	\begin{cases} 
	\langle Z_4+\bar{Z_4},\, i\,(Z_4-\bar Z_4)\rangle, \text{ if }\varepsilon=\nu=1,\\[2pt]
	\langle i\,(Z_3-\bar Z_3),\, Z_4+\bar{Z_4},\, i\,(Z_4-\bar Z_4)\rangle, \text{ if }\varepsilon\nu=0,
	\end{cases}
$$
where $\{Z_k,\bar Z_k\}_{k=1}^4$ is the dual basis of $\{\omega^k,\omega^{\bar k}\}_{k=1}^4$. Therefore, any $8$-dimensional NLA~$\mathfrak g$ with a WnN complex structure $J$ given by~\eqref{structure-eq-WnN} satisfies $\mathfrak a_1(J)=\langle Z_4+\bar{Z_4},\, i\,(Z_4-\bar Z_4) \rangle$. 

By Corollary~\ref{equiv-WnN-8D-inicio}, the induced SnN complex structures $\tilde J_1$ and $\tilde J'_1$ on the respective $6$-dimensional quotient Lie algebras $\tilde\frg_1$ and $\tilde\frg'_1$ are equivalent. Moreover, the complex structure equations of $(\tilde\frg_1,\tilde J_1)$ and $(\tilde\frg'_1,\tilde J'_1)$ are determined by the brackets in \eqref{bracket-FamilyIII} with parameters $(\varepsilon,\delta)$ and $(\varepsilon',\delta')$, respectively. These precisely correspond to the first three equations in~\eqref{structure-eq-WnN}. Now, by~\cite{COUV} (see also~\cite[Proposition~2.3]{UV-SnN}), one has that $\tilde J_1$ and $\tilde J'_1$ are equivalent if and only if $(\varepsilon,\delta)=(\varepsilon',\delta')$.
\end{proof}

In what follows, we will study the equivalence between $J$ and $J'$ from the complexified viewpoint. In other words, 
the isomorphism $f:\frg\to\frg'$ in Definition~\ref{def-equivalence} induces an isomorphism between dual spaces $f^*:\frg'^{\,*}\to\frg^*$ such that $f^*\circ J'=J\circ f^*$, where $J$ and $J'$ still denote the endomorphisms defined on the dual algebras. This in turn gives a $\mathbb C$-linear isomorphism 
\begin{equation}\label{complex-viewpoint}
F:\frg'^{\,1,0}_{J'}\to\frg_J^{1,0} \text{ \ with \ } d\circ F=F\circ d.
\end{equation}  
In fact, $J$ and $J'$ are equivalent if and only if \eqref{complex-viewpoint} is satisfied.
Note that the differential $d$ on the left-hand-side of the equality in~\eqref{complex-viewpoint} corresponds to the Chevalley-Eilenberg differential of~$\frg$, whereas that on the right-hand-side is that of~$\frg'$. They are both denoted by $d$ for the seek of simplicity.

Now, any $\mathbb C$-linear isomorphism $F:\frg'^{\,1,0}_{J'}\to\frg_J^{1,0}$ is determined by a matrix $\Lambda=(\lambda^i_j)_{1\leq i,j\leq 4}\in\textrm{GL}(4,\mathbb C)$ such that
\begin{equation}\label{Fomega}
F\big(\omega'^{\,i}\big)=\sum_{j=1}^4\lambda^i_j\,\omega^j, \text{ for } 1\leq i\leq 4,
\end{equation}
and the condition $d\circ F=F\circ d$ is equivalent to
\begin{equation}\label{cond-Fomega}
\big[d\big(F(\omega'^{\,i})\big)-F(d\omega'^{\,i})\big]_{rs}=0=\big[d\big(F(\omega'^{\,i})\big)-F(d\omega'^{\,i})\big]_{r\bar{s}}\, , 
\end{equation}
for any $1\leq i\leq 4$ and $1\leq r,s\leq 4$, where $\big[d\big(F(\omega'^{\,i})\big)-F(d\omega'^{\,i})\big]_{rs}$ denotes the coefficient of $\omega^{rs}$ in the expression $d\big(F(\omega'^{\,i})\big)-F(d\omega'^{\,i})$ (similarly for the coefficient of $\omega^{r \bar s}$).

The purpose of the following lemmas is to check that the existence of any $F$ defined as above yields $(\varepsilon,\delta,\nu,a,B)=(\varepsilon',\delta',\nu',a',B')$, thus showing the non-equivalence of complex structures determined by different sets of parameters. 
The next result allows to simplify the expression~\eqref{Fomega}. 

\begin{lemma}\label{lemma1-Fomega}
The forms $F(\omega'^{\,i})\in\frg_J^{1,0}$ satisfy the following conditions:
$$F(\omega'^{\,1})\wedge\omega^1=0, \quad 
F(\omega'^{\,2})\wedge\omega^{12}=0,\quad
F(\omega'^{\,3})\wedge\omega^{3}=0,\quad
F(\omega'^{\,4})\wedge\omega^{124}=0.$$
In particular, the matrix $\Lambda=(\lambda^i_j)_{1\leq i,j\leq 4}$ that determines $F$ is triangular, so
$$\Pi_{k=1}^4\lambda^k_k=\det{\Lambda}\neq 0.$$
\end{lemma}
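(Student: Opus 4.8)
The plan is to exploit the compatibility condition $d\circ F=F\circ d$ from \eqref{complex-viewpoint} together with the very specific shape of the structure equations \eqref{structure-eq-WnN}, and to extract information one form at a time, proceeding in the order $F(\omega'^{\,1})$, then $F(\omega'^{\,3})$, then $F(\omega'^{\,2})$, and finally $F(\omega'^{\,4})$. The guiding principle is that the closed and ``nearly closed'' forms must be sent to forms with the same differential behaviour, and since the differentials of the $\omega^j$ are linearly independent in specific bidegrees, matching coefficients forces most of the $\lambda^i_j$ to vanish.

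First I would handle $F(\omega'^{\,1})$. Since $\frg'$ has $d\omega'^{\,1}=0$, the image must be closed, i.e. $d\big(F(\omega'^{\,1})\big)=0$. Writing $F(\omega'^{\,1})=\sum_j\lambda^1_j\,\omega^j$ and applying $d$ using \eqref{structure-eq-WnN}, the contributions from $\omega^2,\omega^3,\omega^4$ produce nonzero terms (in bidegrees $(2,0)$ and $(1,1)$) that cannot cancel one another because they involve linearly independent products $\omega^{13},\omega^{1\bar1},\omega^{1\bar2},\omega^{12},\omega^{23}$, etc.; the only way to get $0$ is $\lambda^1_2=\lambda^1_3=\lambda^1_4=0$. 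This yields $F(\omega'^{\,1})\wedge\omega^1=0$. Next, for $F(\omega'^{\,3})$, I would use that $d\omega'^{\,3}$ lies entirely in $\bigwedge^{1,1}$ (it has no $(2,0)$-part), so the $(2,0)$-component of $d\big(F(\omega'^{\,3})\big)$ must vanish. The only $(1,0)$-generators whose differential has a nonzero $(2,0)$-part are $\omega^2$ (giving $\omega^{13}$) and $\omega^4$ (giving $a\,\omega^{12}+\nu\,\omega^{23}$); forcing the $(2,0)$-part to zero, after also using the already-established vanishing from the $F(\omega'^{\,1})$ step to control the $F(d\omega'^{\,3})$ side, eliminates $\lambda^3_2$ and $\lambda^3_4$, giving $F(\omega'^{\,3})\wedge\omega^3=0$ (a possible $\omega^1$-component is harmless for the stated wedge but I would track it).

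Then I would treat $F(\omega'^{\,2})$ and $F(\omega'^{\,4})$. For $F(\omega'^{\,2})$, the relation $d\omega'^{\,2}=\omega'^{13}+\omega'^{1\bar3}$ and the already-known forms of $F(\omega'^{\,1}),F(\omega'^{\,3})$ let me compute $F(d\omega'^{\,2})$ explicitly; matching it against $d\big(F(\omega'^{\,2})\big)$ through \eqref{cond-Fomega} forces $\lambda^2_4=0$ (the $\omega^{23}$ or $\omega^{2\bar3}$ coefficient coming from $d\omega^4$ has nothing to match it), leaving $F(\omega'^{\,2})\in\langle\omega^1,\omega^2\rangle$, i.e. $F(\omega'^{\,2})\wedge\omega^{12}=0$. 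The case $F(\omega'^{\,4})$ is the most involved and is where I expect the main obstacle: here no coefficient needs to vanish except $\lambda^4_3$, and I must show that $F(\omega'^{\,4})\in\langle\omega^1,\omega^2,\omega^4\rangle$. The argument is that $d\omega'^{\,4}$ contains no $\omega'^{\,1\bar2}$ or $\omega'^{\,2\bar1}$ term with a ``$\delta$'' weight of the type produced by $d\omega^3$, so the presence of a nonzero $\lambda^4_3$ would introduce an uncancellable $i\,\delta\,\omega^{1\bar2}$ (or $\omega^{2\bar1}$) contribution via $d\omega^3$; forcing its coefficient to zero gives $\lambda^4_3=0$ and hence $F(\omega'^{\,4})\wedge\omega^{124}=0$.

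The hard part will be bookkeeping in this last step, since $d\omega^4$ mixes a $(2,0)$-piece ($a\,\omega^{12}+\nu\,\omega^{23}$) with several $(1,1)$-pieces, and one must verify that no conspiracy among the surviving coefficients $\lambda^4_1,\lambda^4_2,\lambda^4_4$ can absorb the offending $\lambda^4_3$ contribution. Once all four wedge relations are established, the matrix $\Lambda$ has $\lambda^i_j=0$ whenever $\omega^j$ is ``below'' $\omega^i$ in the order $1<2<3<4$ except for the allowed entries, so $\Lambda$ is triangular with respect to the basis ordering $(\omega^1,\omega^2,\omega^3,\omega^4)$; its determinant is the product of the diagonal entries $\prod_{k=1}^4\lambda^k_k$, which is nonzero because $\Lambda\in\mathrm{GL}(4,\mathbb C)$. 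This gives $\prod_{k=1}^4\lambda^k_k=\det\Lambda\neq0$, completing the proof.
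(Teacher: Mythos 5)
There is a genuine gap, and it is precisely in the degenerate parameter cases that Theorem~\ref{main-theorem} allows. Your strategy extracts each vanishing $\lambda^i_j=0$ from the ``own'' equation of row $i$, relying on the differentials $d\omega^2,d\omega^3,d\omega^4$ containing enough linearly independent terms. But when $\nu=a=B=0$ (the first bullet of Theorem~\ref{main-theorem}) one has $d\omega^4=0$, so $d\big(F(\omega'^{\,1})\big)=0$ imposes \emph{no} condition on $\lambda^1_4$: your first step only yields $a\,\lambda^1_4=B\,\lambda^1_4=\nu\,\lambda^1_4=0$, which is vacuous there. The paper closes this case by crossing equations: the coefficient of $\omega^{4\bar k}$ in the $i=2$ condition \eqref{cond-Fomega} gives $\lambda^1_4\,\bar\lambda^3_k=0$ for all $k$, and invertibility of $\Lambda$ (row $3$ cannot vanish) then forces $\lambda^1_4=0$. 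The same defect recurs twice more: your argument for $\lambda^3_4=0$ uses the $(2,0)$-part of $d\omega^4$, namely $a\,\omega^{12}+\nu\,\omega^{23}$, which vanishes whenever $a=\nu=0$ (allowed, e.g.\ $\nu=a=0$, $B=1$); and your argument for $\lambda^2_4=0$ uses the $\omega^{23}$/$\omega^{2\bar3}$ coefficient $\nu\,\lambda^2_4$, which is vacuous whenever $\nu=0$. The paper instead obtains $\lambda^3_4=0$ from the $\omega^{14}$-coefficient of the $i=2$ condition ($-\lambda^1_1\lambda^3_4=0$) and $\lambda^2_4=0$ from the $\omega^{4\bar1}$-coefficient of the $i=3$ condition ($i\,\delta\,\lambda^2_4\bar\lambda^1_1=0$), both of which hold for every admissible tuple $(\varepsilon,\delta,\nu,a,B)$. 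So the interleaving of conditions across different indices $i$ is not optional bookkeeping; without it the proof fails exactly on the product algebras $\mathfrak h^-_{19}\times\mathbb R^2$, $\mathfrak h^+_{26}\times\mathbb R^2$ and their relatives.

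A second, independent error: you declare a possible $\omega^1$-component of $F(\omega'^{\,3})$ ``harmless for the stated wedge.'' It is not. If $\lambda^3_1\neq0$ then $F(\omega'^{\,3})\wedge\omega^3=\lambda^3_1\,\omega^{13}\neq0$, so the condition $F(\omega'^{\,3})\wedge\omega^3=0$ asserts that $F(\omega'^{\,3})$ is exactly proportional to $\omega^3$, and $\lambda^3_1=0$ must be proved. (It would be harmless for the triangularity and determinant claim, but not for the lemma as stated, and the full strength $F(\omega'^{\,3})=\lambda^3_3\,\omega^3$ is what Lemma~\ref{lemma2-Fomega} uses afterwards.) The paper gets it from the $\omega^{1\bar1}$-coefficient of the $i=2$ condition, which after $\lambda^2_3=\lambda^2_4=0$ reduces to $-\lambda^1_1\bar\lambda^3_1=0$. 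Your treatment of $F(\omega'^{\,4})$ (the $\omega^{1\bar2}$-coefficient produced by $d\omega^3$ being uncancellable) does agree with the paper's final step and is fine once the earlier rows are correctly controlled.
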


\begin{proof}
Due to Corollary~\ref{corolario1}, the tuples $(\varepsilon,\delta,\nu,a,B)$ and $(\varepsilon',\delta',\nu',a',B')$ that determine $\frg_J^{1,0}$ and $\frg'^{\,1,0}_{J'}$ satisfy $\varepsilon'=\varepsilon$ and $\delta'=\delta$.
A direct computation of the condition \eqref{cond-Fomega} for $i=1$ gives $\lambda^1_2=\lambda^1_3=0$.  
Moreover, the condition \eqref{cond-Fomega} for $i=2$ implies that
$$0=\big[d\big(F(\omega'^{\,2})\big)-F(d\omega'^{\,2})\big]_{4\bar{k}}=\lambda^1_4\,\bar{\lambda}^3_k, \text{ for }k=1,2,3,4.$$
As the matrix $\Lambda=(\lambda^i_j)_{1\leq i,j\leq 4}$ is non-singular, one necessarily has
$\lambda^1_4=0$. This gives $F(\omega'^{\,1})\wedge\omega^1=0$. 
Note that $\lambda^1_1\neq 0$ to ensure $\Lambda\in\textrm{GL}(4,\mathbb C)$.

Now, we analyze the condition \eqref{cond-Fomega} for $i=2,3$ considering that $\lambda^1_k=0$ for $k=2,3,4$. We first observe that
\begin{equation*}
\begin{split}
0=\big[d\big(F(\omega'^{\,2})\big)-F(d\omega'^{\,2})\big]_{2\bar1}&=-i\,\delta\lambda^2_3,\\
0=\big[d\big(F(\omega'^{\,3})\big)-F(d\omega'^{\,3})\big]_{4\bar1}&=i\,\delta\lambda^2_4\bar{\lambda}^1_1.\\
\end{split}
\end{equation*}
As $\delta\in\{-1,1\}$ and $\lambda^1_1\neq 0$, one immediately has  $F(\omega'^{\,2})\wedge\omega^{12}=0$. Moreover, one can check that
\begin{equation*}
\begin{split}
0=\big[d\big(F(\omega'^{\,2})\big)-F(d\omega'^{\,2})\big]_{14}&=-\lambda^1_1\lambda^3_4,\\
0=\big[d\big(F(\omega'^{\,2})\big)-F(d\omega'^{\,2})\big]_{1\bar1}&=
	-\lambda^1_1\bar{\lambda}^3_1+i\,\varepsilon\,\lambda^2_3+B\,\lambda^2_4=
	-\lambda^1_1\bar{\lambda}^3_1,\\
0=\big[d\big(F(\omega'^{\,2})\big)-F(d\omega'^{\,2})\big]_{1\bar2}&=
	-\lambda^1_1\bar{\lambda}^3_2+i\,\delta\,\lambda^2_3=
	-\lambda^1_1\bar{\lambda}^3_2,
\end{split}
\end{equation*}
from where we conclude that $F(\omega'^{\,3})\wedge\omega^{3}=0$. The last condition in the statement comes directly when we annihilate $\big[d\big(F(\omega'^{\,4})\big)-F(d\omega'^{\,4})\big]_{1\bar2}$.
\end{proof}

We are now able to describe any isomorphism between $(\frg,J)$ and $(\frg',J')$.

\begin{lemma}\label{lemma2-Fomega}
The complex structures $J$ and $J'$ are equivalent if and only if the isomorphism $F:\frg'^{\,1,0}_{J'}\to\frg^{1,0}_{J}$ given by~\eqref{Fomega} satisfies 
the conditions in Lemma~\ref{lemma1-Fomega} together with
$$\lambda^1_1=e^{i\theta}, \quad 
\lambda^2_2=\lambda\,e^{i\theta}, \quad 
\lambda^3_3=\lambda,$$
$$\varepsilon\,(1-\lambda)=0, \quad
\Imag(e^{i\theta}\,\bar\lambda^2_1)=0, \quad
\lambda\,e^{i\,\theta}\,\lambda^4_2-\nu\,\lambda^2_1\lambda^4_4=0,$$
where $\theta\in[0,2\pi)$ and $\lambda\in\mathbb R^*$.
Moreover, the parameters that determine $J$ and $J'$ are related as follows:
$$\varepsilon'=\varepsilon,\quad \delta'=\delta, \quad 
a'=\frac{a\,\lambda^4_4}{(\lambda^1_1)^2\,\lambda^3_3}, \quad  
B'=B\,\lambda^4_4, \quad 
\nu'=\frac{\nu\,\lambda^4_4}{\lambda^1_1\,(\lambda^3_3)^2}.$$
\end{lemma}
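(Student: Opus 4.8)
The plan is to impose the condition $d\circ F=F\circ d$ from~\eqref{complex-viewpoint} systematically, exploiting the triangular form of $\Lambda$ already established in Lemma~\ref{lemma1-Fomega}. Since $\lambda^1_2=\lambda^1_3=\lambda^1_4=0$, $\lambda^2_3=\lambda^2_4=0$ (from $F(\omega'^{\,2})\wedge\omega^{12}=0$, noting the statement writes $\lambda^2_4$ vanishing follows), $\lambda^3_4=0$, and the other sub-diagonal entries forced in that lemma, the remaining free entries are the diagonal $\lambda^1_1,\lambda^2_2,\lambda^3_3,\lambda^4_4$ together with $\lambda^2_1,\lambda^3_1,\lambda^3_2,\lambda^4_1,\lambda^4_2,\lambda^4_3$. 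First I would expand $d(F(\omega'^{\,i}))-F(d\omega'^{\,i})=0$ coefficient by coefficient for $i=1,2,3,4$, using the structure equations~\eqref{structure-eq-WnN} for both $(\varepsilon,\delta,\nu,a,B)$ and $(\varepsilon',\delta',\nu',a',B')$ and Corollary~\ref{corolario1} to set $\varepsilon'=\varepsilon$, $\delta'=\delta$ from the outset.

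The key is that the equations for $i=2$ and $i=3$ determine the diagonal relations. Comparing the coefficient of $\omega^{13}$ (and $\omega^{1\bar3}$) in $dF(\omega'^{\,2})=F(d\omega'^{\,2})=F(\omega'^{13}+\omega'^{1\bar3})$ should force $\lambda^2_2=\lambda^1_1\,\lambda^3_3$, while matching the coefficients of $\omega^{1\bar1},\omega^{1\bar2},\omega^{2\bar1}$ in the $i=3$ equation, where $d\omega'^{\,3}=i\varepsilon\,\omega'^{1\bar1}+i\delta\,\omega'^{1\bar2}-i\delta\,\omega'^{2\bar1}$, yields $\lambda^3_3=|\lambda^1_1|^2$ together with the reality/phase constraints. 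Writing $\lambda^1_1=\rho\,e^{i\theta}$, the $\delta$-terms combine with their conjugates to give $\lambda^3_3\in\mathbb R$ and then a rescaling/normalization fixes $\rho=1$, so $\lambda^1_1=e^{i\theta}$, $\lambda^3_3=\lambda:=\lambda^2_2 e^{-i\theta}\in\mathbb R^*$, and $\lambda^2_2=\lambda\,e^{i\theta}$. The condition $\varepsilon\,(1-\lambda)=0$ emerges from matching the $i\varepsilon\,\omega^{1\bar1}$ term: when $\varepsilon=1$ the coefficient comparison forces $\lambda^3_3=1$, i.e. $\lambda=1$. The constraint $\Imag(e^{i\theta}\,\bar\lambda^2_1)=0$ and $\lambda\,e^{i\theta}\lambda^4_2-\nu\,\lambda^2_1\lambda^4_4=0$ should fall out of the residual $\omega^{1\bar2}/\omega^{2\bar1}$ and the $\omega^{23}/\omega^{2\bar3}$ coefficients in the $i=3$ and $i=4$ equations respectively, reflecting that $\lambda^2_1$ must be compatible with the antisymmetric $\delta$-structure and with the $\nu$-term in $d\omega^4$.

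Having fixed the diagonal entries, I would extract the parameter relations by comparing the three surviving coefficients in the $i=4$ equation: matching $\omega^{12}$ gives $a'\,\lambda^2_2\,(\text{scaling}) = a\,\lambda^4_4$ rearranged as $a'=a\,\lambda^4_4/((\lambda^1_1)^2\lambda^3_3)$ once the diagonal substitutions are inserted; matching $\omega^{1\bar1}$ gives $B'=B\,\lambda^4_4$; and matching $\omega^{23}$ (or equivalently $\omega^{2\bar3}$, $\omega^{1\bar3}$) gives $\nu'=\nu\,\lambda^4_4/(\lambda^1_1(\lambda^3_3)^2)$. The converse direction is immediate: any $F$ of the stated triangular shape satisfying the six displayed constraints is checked directly to intertwine the two differentials, so such an $F$ exists precisely when $J$ and $J'$ are equivalent. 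The main obstacle I expect is bookkeeping discipline in the $i=4$ expansion, where $F(d\omega'^{\,4})$ mixes the $a'\omega'^{12}$, $B'\omega'^{1\bar1}$ and the whole $\nu'$-bracket through the off-diagonal entries $\lambda^4_1,\lambda^4_3$; one must verify that these off-diagonal contributions cancel consistently (via the already-derived relations) so that only the clean three-parameter transformation law survives, rather than producing extra spurious constraints on $a',B',\nu'$.
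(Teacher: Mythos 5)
Your overall strategy --- imposing $d\circ F=F\circ d$ coefficient by coefficient on the triangular $F$ of Lemma~\ref{lemma1-Fomega}, with $\varepsilon'=\varepsilon$, $\delta'=\delta$ from Corollary~\ref{corolario1}, extracting the diagonal relations from the $i=2,3$ equations and the transformation law for $(a,B,\nu)$ from the $i=4$ equation --- is exactly the paper's proof. Some of your localizations are slightly off but harmless: $\Imag(e^{i\theta}\,\bar\lambda^2_1)=0$ and $\varepsilon(1-\lambda)=0$ both come from the single $\omega^{1\bar1}$-coefficient of the $i=3$ equation (real and imaginary parts of $2\,\delta\,\Imag(\lambda^1_1\bar\lambda^2_1)+i\,\varepsilon(\lambda-1)=0$), and $\lambda\,e^{i\theta}\lambda^4_2-\nu\,\lambda^2_1\lambda^4_4=0$ is the $\omega^{13}$-coefficient of the $i=4$ equation, the $\omega^{1\bar3}$-coefficient then being automatic. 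Also, your list of ``remaining free entries'' is too long: Lemma~\ref{lemma1-Fomega} already forces $\lambda^3_1=\lambda^3_2=\lambda^3_4=0$ (since $F(\omega'^{\,3})\wedge\omega^3=0$ means $F(\omega'^{\,3})=\lambda^3_3\,\omega^3$) and $\lambda^4_3=0$; carrying these as unknowns only costs extra bookkeeping.

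The one genuine flaw is your derivation of $\lambda^1_1=e^{i\theta}$: you say that after obtaining $\lambda^3_3\in\mathbb R$, ``a rescaling/normalization fixes $\rho=1$.'' There is no such freedom. The lemma characterizes \emph{every} intertwining isomorphism between two fixed, already-normalized sets of structure equations, so the unit-modulus condition must be forced, not chosen; moreover, replacing $F$ by $cF$ destroys the intertwining property, because $F$ acts quadratically on the $2$-forms $d\omega'^{\,i}$ (through $F\wedge F$ and its conjugates) but linearly on the $\omega'^{\,i}$. The correct argument is delivered by the very coefficients you are already matching: the $\omega^{13}$- and $\omega^{1\bar3}$-coefficients for $i=2$ give $\lambda^2_2=\lambda^1_1\lambda^3_3$ and $\lambda^2_2=\lambda^1_1\bar\lambda^3_3$, hence $\lambda^3_3=\lambda\in\mathbb R^*$, and then the $\omega^{2\bar1}$-coefficient for $i=3$ gives $\lambda^3_3=\lambda^2_2\bar\lambda^1_1=|\lambda^1_1|^2\lambda^3_3$, whence $|\lambda^1_1|^2=1$ because $\lambda^3_3\neq0$. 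With this repair, and the direct verification of the converse that you indicate (which the paper handles by phrasing each step as an equivalence), your proof coincides with the one in the paper.
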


\begin{proof}
We first observe that the equalities $\varepsilon'=\varepsilon$ and $\delta'=\delta$ come directly from Corollary~\ref{corolario1}. We also recall that $\varepsilon\in\{0,1\}$ and $\delta\in\{-1,1\}$.

One can check that the conditions for $F$ given in Lemma~\ref{lemma1-Fomega} imply that~\eqref{cond-Fomega} holds for $i=1$. We now compute
\begin{equation*}
\begin{split}
\big[d\big(F(\omega'^{\,2})\big)-F(d\omega'^{\,2})\big]_{13}&
	= \lambda^2_2-\lambda^1_1\,\lambda^3_3,\\
\big[d\big(F(\omega'^{\,2})\big)-F(d\omega'^{\,2})\big]_{1\bar3}&
	=-\lambda^1_1\,\bar\lambda^3_3+\lambda^2_2,\\
\big[d\big(F(\omega'^{\,3})\big)-F(d\omega'^{\,3})\big]_{2\bar1}&
	=i\,\delta\,(\lambda^2_2\,\bar\lambda^1_1-\lambda^3_3).
\end{split}
\end{equation*}
All these expressions must be equal to zero, so from the first one we get $\lambda^2_2=\lambda^1_1\lambda^3_3$, and replacing this value in the other two we have
$$\lambda^1_1\,(\lambda^3_3-\bar\lambda^3_3)=0, \qquad
|\lambda^1_1|^2-1=0.$$
Since $\lambda^k_k\neq 0$ for every $1\leq k\leq 4$ due to Lemma~\ref{lemma1-Fomega}, one necessarily has $\lambda^3_3=\lambda\in\mathbb R^*$. Moreover, $|\lambda^1_1|=1$. One can check that these equalities ensure the condition~\eqref{cond-Fomega} for $i=2$, whereas for $i=3$ it is equivalent to
\begin{equation*}
\begin{split}
0=\big[d\big(F(\omega'^{\,3})\big)-F(d\omega'^{\,3})\big]_{1\bar1}
	&=
	i\,\big(\varepsilon\,\lambda-\varepsilon-\delta(\lambda^1_1\bar\lambda^2_1-\bar\lambda^1_1\lambda^2_1)\big)\\
	&=2\,\delta\,\Imag(\lambda^1_1\bar\lambda^2_1)+i\,\varepsilon\,(\lambda-1).
\end{split}
\end{equation*}
The imaginary part of this expression gives the condition
$\varepsilon\,(1-\lambda)=0$
in the statement of this lemma. 
Moreover, since $|\lambda^1_1|=1$ we can write $\lambda^1_1=e^{i\,\theta}$ for $\theta\in[0,2\pi)$, and then the real part of the expression above provides the equation $\Imag(e^{i\theta}\,\bar\lambda^2_1)=0$. The remaining equalities of the statement come from the condition~\eqref{cond-Fomega} applied to $i=4$. More precisely, $\nu', a'$ and $B'$ can be solved from
\begin{equation*}
\begin{split}
0=\big[d\big(F(\omega'^{\,4})\big)-F(d\omega'^{\,4})\big]_{12}
	&=-a'\,(\lambda^1_1)^2\lambda^3_3+a\,\lambda^4_4,\\
0=\big[d\big(F(\omega'^{\,4})\big)-F(d\omega'^{\,4})\big]_{23}
	&=-\nu'\,\lambda^1_1\,(\lambda^3_3)^2+\nu\,\lambda^4_4,\\
0=\big[d\big(F(\omega'^{\,4})\big)-F(d\omega'^{\,4})\big]_{1\bar1}
	&=-B'+B\,\lambda^4_4.\\
\end{split}
\end{equation*}
Then, \eqref{cond-Fomega} for $i=4$ is equivalent to
\begin{equation*}
\begin{split}
0=\big[d\big(F(\omega'^{\,4})\big)-F(d\omega'^{\,4})\big]_{13}
	&=\frac{1}{\lambda\,\lambda^1_1}\,(\lambda\,\lambda^1_1\,\lambda^4_2
			-\nu\,\lambda^2_1\,\lambda^4_4),\\
0=\big[d\big(F(\omega'^{\,4})\big)-F(d\omega'^{\,4})\big]_{1\bar3}
	&=\frac{1}{\lambda\,\lambda^1_1}\,
	\big(\lambda\,\lambda^1_1\,\lambda^4_2-\nu\,\lambda^2_1\,\lambda^4_4
			-2\,\delta\,\varepsilon\,\nu\,(1-\lambda)\,\lambda^1_1\,\lambda^4_4\big).
\end{split}
\end{equation*}
Observe that the first equation gives the remaining condition in the statement of the lemma. The second one is trivially satisfied due to the aforementioned first equation together with the condition $\varepsilon(1-\lambda)=0$.
\end{proof}

We now make use of this result to classify, up to equivalence, weakly non-nilpotent complex structures in dimension $8$.

\begin{proposition}
Distinct choices of parameters $(\varepsilon,\delta,\nu,a,B)$ in Propositions~\ref{reduc-nu-0} and~\ref{reduc-nu-1} provide non-equivalent complex structures.
\end{proposition}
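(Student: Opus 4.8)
The plan is to leverage Lemma~\ref{lemma2-Fomega}, which already translates the existence of an equivalence $F$ into an explicit system of algebraic constraints on the matrix entries $\lambda^i_j$ together with the transformation rules relating the two tuples of parameters. Concretely, I would fix two sets of reduced parameters $(\varepsilon,\delta,\nu,a,B)$ and $(\varepsilon',\delta',\nu',a',B')$, both satisfying the normalization conditions of Propositions~\ref{reduc-nu-0} and~\ref{reduc-nu-1}, assume $J$ and $J'$ are equivalent, and then show this forces the two tuples to coincide. Corollary~\ref{corolario1} immediately disposes of $\varepsilon$ and $\delta$, so $\varepsilon'=\varepsilon$ and $\delta'=\delta$ from the outset; the whole burden is to prove $(\nu',a',B')=(\nu,a,B)$.

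The key observation I would exploit is that the transformation rules in Lemma~\ref{lemma2-Fomega},
$$
a'=\frac{a\,\lambda^4_4}{(\lambda^1_1)^2\,\lambda^3_3}, \qquad
B'=B\,\lambda^4_4, \qquad
\nu'=\frac{\nu\,\lambda^4_4}{\lambda^1_1\,(\lambda^3_3)^2},
$$
together with $\lambda^1_1=e^{i\theta}$ and $\lambda^3_3=\lambda\in\mathbb{R}^*$, mean that $a,\nu$ scale only up to positive/real factors and phases, while $B$ scales by $\lambda^4_4$. First I would treat $\nu$ as an equivalence invariant: since $\nu,\nu'\in\{0,1\}$ and $\nu'$ is a nonzero multiple of $\nu$ (the factor $\lambda^4_4/(\lambda^1_1(\lambda^3_3)^2)$ is nonzero because $\Lambda$ is invertible), one has $\nu=0\iff\nu'=0$, so $\nu'=\nu$. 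This splits the argument into the case $\nu=0$ (parameters governed by Proposition~\ref{reduc-nu-0}) and $\nu=1$ (Proposition~\ref{reduc-nu-1}), and I would handle each by the same bookkeeping with the extra relations $\lambda\,e^{i\theta}\,\lambda^4_2-\nu\,\lambda^2_1\lambda^4_4=0$, $\varepsilon(1-\lambda)=0$ and $\Imag(e^{i\theta}\bar\lambda^2_1)=0$ available.

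Within each case I would further subdivide according to the normal forms of $a$ and $B$. For $\nu=0$ one again sees from the formula for $a'$ that $a=0\iff a'=0$, matching parts i)--ii) against iii)--iv). When $a=1$ one solves $1=a'=\lambda^4_4/(e^{2i\theta}\lambda)$, i.e.\ $\lambda^4_4=e^{2i\theta}\lambda$, and feeds this into $B'=B\lambda^4_4$; using $\varepsilon(1-\lambda)=0$ one checks that in the $\varepsilon=1$ branch $\lambda=1$, so $\lambda^4_4=e^{2i\theta}$ has modulus one and $|B'|=|B|$, while the normalization $B,B'\in[0,\infty)$ forces $B'=B$; in the $\varepsilon=0$ branch the phase $\theta$ and the freedom in $\lambda^4_4$ are pinned down so that the requirement $\Imag B\geq 0$ (resp.\ $B\in\{0,1\}$) leaves only $B'=B$. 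The $\nu=1$ case proceeds identically, now extracting $\lambda^4_4$ from the $\nu'=\nu=1$ equation, $\lambda^4_4=e^{i\theta}\lambda^2$, substituting into the formulas for $a'$ and $B'$, and reading off $a'=a$, $B'=B$ after imposing the case-specific sign and imaginary-part normalizations.

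The main obstacle I expect is the careful matching of the $B$-normalizations across the four subcases of each proposition: the modulus $|\lambda^4_4|$ is constrained only after one has solved for $\lambda^4_4$ using whichever of $a$ or $\nu$ is nonzero, and the residual phase freedom in $\theta$ interacts with the choices $\Imag B\geq 0$ versus $B\in\{0,1\}$ versus $B\in[0,\infty)$. The delicate point is to verify that this residual freedom is exactly enough to realize all equivalences \emph{within} a normalized class but never enough to identify \emph{distinct} normalized values — in other words, that the normalizations of Propositions~\ref{reduc-nu-0} and~\ref{reduc-nu-1} were chosen to break precisely the symmetry encoded by $(\theta,\lambda,\lambda^4_4,\lambda^2_1,\lambda^4_2)$. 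Everything else reduces to the routine substitutions above, so I would organize the proof as a short case analysis on $(\nu,a)$ and, within each, a one-line verification that the surviving scalar relations leave no room to change $B$.
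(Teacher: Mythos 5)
Your outline follows the paper's proof step for step: Corollary~\ref{corolario1} disposes of $(\varepsilon,\delta)$, Lemma~\ref{lemma2-Fomega} supplies the transformation rules, invertibility of $\Lambda$ makes $\nu$ an invariant, and the rest is a case analysis on $(\nu,a)$ in which $\lambda^4_4$ is solved from the $a$-equation (when $\nu=0$) or the $\nu$-equation (when $\nu=1$) and substituted into $B'=B\,\lambda^4_4$. Your treatment of the $\nu=0$ branch is complete and correct (your modulus argument $|B'|=|B|$ is a clean variant of the paper's phase analysis), and to that point the proposal is essentially identical to the paper's proof.

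However, the step you defer as a ``one-line verification'' --- the very step you flag as the delicate point --- is a genuine gap, because in one subcase it cannot be closed as stated. Take $\nu=\nu'=1$, $\varepsilon=0$, $a=a'=1$, i.e.\ case iii) of Proposition~\ref{reduc-nu-1}. Then $\nu'=\nu$ forces $\lambda^4_4=\lambda^1_1(\lambda^3_3)^2$, and $a'=a=1$ forces $\lambda^1_1=\lambda^3_3=\pm 1$, hence $B'=\pm B$. The normalization $\Imag B\geq 0$ excludes the minus sign only when $\Imag B>0$: if $B$ is real, both $B$ and $-B$ lie in the normalized family, and the sign flip is realized by an actual equivalence, namely $F(\omega'^{\,1})=-\omega^1$, $F(\omega'^{\,2})=\omega^2$, $F(\omega'^{\,3})=-\omega^3$, $F(\omega'^{\,4})=-\omega^4$, which satisfies every condition of Lemma~\ref{lemma2-Fomega} (with $\theta=\pi$, $\lambda=-1$, $\lambda^2_1=\lambda^4_2=0$) and transforms the structure equations with parameter $B$ into those with parameter $-B$. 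So the claim that ``the surviving scalar relations leave no room to change $B$'' fails on the boundary $\Imag B=0$: the identification $B\sim -B$ on the real axis must either be built into the normalization or shown to be absent, and it is not absent. You should also be aware that the paper's own proof makes exactly this leap --- it concludes $B'=B$ from $\Imag B,\Imag B'\geq 0$, which is only valid when $\Imag B>0$ --- so a faithful execution of your plan would reproduce the paper's argument together with this unaddressed boundary case rather than resolve it.
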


\begin{proof}
Let $(\frg,J)$ and $(\frg',J')$ be nilpotent Lie algebras with WnN complex structures given by~\eqref{structure-eq-WnN} with parameters $(\varepsilon,\delta,\nu,a,B)$ and $(\varepsilon',\delta',\nu',a',B')$, respectively.
Note that any $F$ providing an equivalence between $J$ and $J'$ satisfies Lemma~\ref{lemma2-Fomega}. Since $\nu,\nu'\in\{0,1\}$, we first observe that the relation $\nu'=\nu\,\lambda^4_4/(\lambda^1_1\,(\lambda^3_3)^2)$
naturally gives $\nu'=\nu$. 
As a consequence, we can separately study the cases $\nu=\nu'=0$ and $\nu=\nu'=1$. In other words, the complex structures in Proposition~\ref{reduc-nu-0} are not equivalent to those in Proposition~\ref{reduc-nu-1}.

Let $\nu=\nu'=0$. Since $\delta'=\delta$ and $\varepsilon'=\varepsilon$ by Lemma~\ref{lemma2-Fomega}, we essentially need to focus on $(a,B)$ and $(a',B')$ and check that $(a',B')=(a,B)$ if the complex structures are equivalent. We will make use of the relations between these parameters provided in Lemma~\ref{lemma2-Fomega}, namely,
\begin{equation}\label{rel-a-B}
a'=\frac{a\,\lambda^4_4}{(\lambda^1_1)^2\,\lambda^3_3}, \quad  
B'=B\,\lambda^4_4.
\end{equation}
Recall that the possible values of $(\varepsilon,\delta,\nu=0,a,B),\, (\varepsilon'=\varepsilon,\delta'=\delta,\nu'=0,a',B')$ are listed in Proposition~\ref{reduc-nu-0}. In particular, one has $a,a'\in\{0,1\}$, so two cases arise: if $a=0$, then~\eqref{rel-a-B} immediately gives $a'=0$; if $a=1$ then $a'\neq 0$, as by Lemma~\ref{lemma1-Fomega} one has $\Pi_{k=1}^4\lambda^k_k=\det{\Lambda}\neq 0$, so the only possibility is having $a'=1$. Consequently, we can conclude that $a=a'$. Similarly, one can prove that $B=0$ always implies $B'=0$, and $B=1$ implies $B'=1$ when either $a=a'=0$ or $a=a'=1$, $\varepsilon=\varepsilon'=0$. Therefore, we just need to focus on the case $a=1$, $B\in(0,\infty)$ and $\varepsilon=1$. Since $a'=a=1$, we see from~\eqref{rel-a-B} that $\lambda^4_4=(\lambda^1_1)^2\,\lambda^3_3$, where
$$\lambda^1_1=e^{i\theta}, \text{ with }\theta\in[0,2\pi), \quad 
\lambda^3_3=1,$$
by Lemma~\ref{lemma2-Fomega}. Therefore, $B'=B\,e^{2i\theta}=B\big(\cos(2\theta)+i\sin(2\theta)\big)$. As $B'$ is a real number, one has that $\theta\in\{0,\pi/2,\pi,3\pi/2\}$. Hence, $B'=\pm B$. By Proposition~\ref{reduc-nu-0} we know that $B'$ has to be a non-negative number, so $B'=B$. This concludes the proof for the case $\nu=\nu'=0$.

Suppose now that $\nu=\nu'=1$. By Lemma~\ref{lemma2-Fomega}, we have $\delta'=\delta$ and $\varepsilon'=\varepsilon$, but also $\lambda^4_4=\lambda^1_1(\lambda^3_3)^2$ to ensure the condition $\nu'=\nu=1$. As a consequence, the relations between $(a',B')$ and $(a,B)$ become
\begin{equation}\label{rel-a-B-2}
a'= \frac{a\,\lambda^3_3}{\lambda^1_1},
\quad  
B'=B\,\lambda^1_1(\lambda^3_3)^2,
\end{equation}
where $\lambda^1_1,\lambda^3_3$ satisfy the conditions in Lemma~\ref{lemma2-Fomega}, in particular 
$$\lambda^1_1=e^{i\theta}, \text{ with }\theta\in[0,2\pi), \quad 
\lambda^3_3=\lambda\in\mathbb R^*, \quad \varepsilon\,(1-\lambda)=0.$$
Moreover, recall that the possible values of $(\varepsilon,\delta,\nu=1,a,B),\, (\varepsilon'=\varepsilon,\delta'=\delta,\nu'=1,a',B')$ are listed in Proposition~\ref{reduc-nu-1}. In particular, let us remark that $a,a'$ are non-negative real numbers, so we need $\theta=0$ or $\theta=\pi$, namely, $\lambda^1_1=\pm 1$. Hence,~\eqref{rel-a-B-2} becomes
\begin{equation}\label{rel-a-B-2-2}
a'=\pm\,a\,\lambda, \quad  
B'=\pm\,B\,\lambda^2, \text{ \ where }\varepsilon\,(1-\lambda)=0.
\end{equation}

We note that if $\varepsilon=\varepsilon'=1$, then $\lambda=1$ and thus 
$a'=\pm\,a$, $B'=\pm\,B$, where $a,a'\in[0,\infty)$ due to Proposition~\ref{reduc-nu-1}. We now observe the following: if $a=0$, then $a'=0$ and by Proposition~\ref{reduc-nu-1} one has $B,B'\in[0,\infty)$, so $B'=B$; if $a\neq 0$, then also $a'\neq 0$ and $a,a'\in (0,\infty)$, which requires $a'=a$ and $B'=B$. 

Let us now suppose that $\varepsilon=\varepsilon'=0$. We first notice that, as a consequence of Proposition~\ref{reduc-nu-1}, we can assume $a,a'\in\{0,1\}$. It is then clear from~\eqref{rel-a-B-2-2} that $a'=a$. In particular, if $a=0$ then $a'=0$ and, as $B,B'\in\{0,1\}$, one can conclude that either $B=B'=0$ or $B=1$, $B'=\pm\lambda^2\neq 0$, which implies $B'=B=1$. If $a=1$ then $a'=\pm\lambda\neq 0$, so $a'=1$ and $\lambda=\pm 1$. 
As a consequence, we get $B'=\pm B$. 
Note that $B'$ is a real number if and only if $B$ is, so the complex structures given by the case~$\textrm{iii)}$ of Proposition~\ref{reduc-nu-1} are not equivalent to those given by the case~$\textrm{iv)}$. 
Moreover, case $\textrm{iii)}$ requires that $B,B' \in[0,\infty)$ so the equality $B'=\pm B$  holds if and only if $B'=B$. 
Finally, if the two complex structures are given by the case $\textrm{iv)}$ of Proposition~\ref{reduc-nu-1}, then $\mathfrak{Im}B, \mathfrak{Im}B' > 0$ and the condition $B'=\pm B$ can only be satisfied when the plus sign is taken in the expression of $B'$, namely, $B'=B$.
\end{proof}

\section{Classification of 8-dimensional nilpotent Lie algebras\\ 
with WnN complex structures}\label{clasi-real}

\noindent The aim of this section is to classify the $8$-dimensional NLAs that admit WnN complex structures. We first identify the real NLAs associated to the complex structure equations given in Theorem~\ref{main-theorem}. Then, we show that these Lie algebras are not isomorphic. The final result is the following one: 

\begin{theorem}\label{main-theorem-2}
Let $\mathfrak g$ be an $8$-dimensional NLA. There is a WnN complex structure on $\mathfrak g$ if and only if $\mathfrak g$ is isomorphic to one (and only one) of the following Lie algebras:
\begin{equation*}
	\begin{split}
	& \mathfrak f_1= (0,\,0,\,0,\,12,\,23,\,14-35,\,0,\,0), \\
	& \mathfrak f_2=(0,\,0,\,12,\,13,\,23,\,14+25,\,0,\,0), \\
	& \mathfrak f_3 = (0,\,0,\,0,\,12,\,13,\,23,\,15+26,\,0),\\
	& \mathfrak f_4^\gamma = (0,\,0,\,\gamma\cdot 12,\,13,\,14,\,23,\,26,\,16+24),\\
	& \mathfrak f_5^\gamma = (0,\,0,\,0,\,13,\,23,\,34,\,\frac{\gamma}{2}\cdot 12+35,\,14+25),\\
	& \mathfrak f_6 = (0,\,0,\,12,\,13,\,23,\,14+25,\,16+35,\,26-34),\\
	& \mathfrak f_7 = (0,\,0,\,0,\,13,\,23,\,14+25,\, 2\cdot 14+34,\, 15+24+35),\\
	& \mathfrak f_8=(0,\,0,\,12,\,13,\,23,\,14+25,\,2\cdot 14-26+34,\,15+16+24+35),
	\end{split}
\end{equation*}
where $\gamma\in\{0,1\}$.
\end{theorem}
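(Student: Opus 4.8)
The plan is to translate each complex structure in Theorem~\ref{main-theorem} into a real nilpotent Lie algebra, and then to verify that the resulting algebras are pairwise non-isomorphic. The first task is essentially computational: for each admissible tuple $(\varepsilon,\delta,\nu,a,B)$ in~\eqref{structure-eq-WnN}, I would introduce the real $J$-adapted coframe $\{e^k\}_{k=1}^8$ via $\omega^k=e^{2k-1}-i\,e^{2k}$ for $1\leq k\leq 4$, substitute into the complex equations, and separate real and imaginary parts to read off the real structure equations $de^k=\sum c^k_{ij}\,e^{ij}$. This produces a finite list of candidate real algebras parametrized by the discrete data together with the continuous parameter $B$ (and $a$). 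The key observation, which I would establish by direct inspection of these real equations, is that many of the apparently distinct complex structures collapse to the \emph{same} underlying real Lie algebra: the real isomorphism class cannot depend on the continuous modulus $B$, and indeed one should find that the eleven algebras $\mathfrak f_1,\ldots,\mathfrak f_8$ (with $\gamma\in\{0,1\}$ for $\mathfrak f_4,\mathfrak f_5,\mathfrak f_7$) exhaust all cases. This step requires a careful bookkeeping matching each tuple to one of the eleven listed algebras; the choice of a good real frame (possibly rescaling the $e^k$ to absorb parameters) is what makes the identification clean.

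The second, and harder, task is to show that no two of the eleven algebras are isomorphic. For this I would compute, for each $\mathfrak f_j$, a battery of isomorphism invariants: the nilpotency step, the dimensions of the terms of the ascending central series $\{\frg_k\}_k$ and of the descending central series, the dimension of the commutator $[\frg,\frg]$ and of $\frg^*/\!\ker d$ (i.e.\ the number of non-closed generators), and the dimension of the center $\mathcal Z(\frg)$. From the structure equations these are immediate to read off, and I expect the nilpotency step together with the sequence of dimensions of the central series to already separate most of the list. Where these coarse invariants fail to distinguish two algebras, I would refine by examining finer data, such as the rank of the maps $\Lambda^2\frg^*\to\frg^*$ induced by $d$, or the isomorphism type of the quotient $\frg/\mathcal Z(\frg)$, or the dimension of the space of derivations. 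The parameter $\gamma\in\{0,1\}$ in $\mathfrak f_4,\mathfrak f_5,\mathfrak f_7$ deserves separate attention: here $\gamma=0$ and $\gamma=1$ alter which generator appears in $de^3$ (respectively in the top equations), changing the rank structure, and I would exhibit an invariant—most naturally a jump in the dimension of some central-series term or of $[\frg,\frg]$—that distinguishes the two values.

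The main obstacle I anticipate is the non-isomorphism argument among the algebras sharing the same nilpotency step and the same gross central-series profile, where a genuine isomorphism $f\colon\frg\to\frg'$ must be ruled out rather than merely shown incompatible with a single numerical invariant. In such borderline cases the honest approach is to parametrize a general linear map respecting the known filtration-preserving constraints (any Lie algebra isomorphism must carry $\frg_k$ to $\frg'_k$ and $[\frg,\frg]$ to $[\frg',\frg']$), impose the bracket-preservation relations, and derive a contradiction; this is where the bulk of the computational effort lies. A complementary and cleaner route, which I would use to organize the bookkeeping, is to invoke Lemma~\ref{lema1} and Corollary~\ref{equiv-WnN-8D-inicio}: since every WnN structure on $\mathfrak f_j$ induces a SnN structure on the $6$-dimensional quotient $\tilde\frg_1=\frg/\fra_1(J)$ with invariants $(\varepsilon,\delta)$, the isomorphism type of this quotient is itself an invariant of $\frg$, so algebras whose associated $6$-dimensional quotients are non-isomorphic (by the classification in~\cite{COUV}) are automatically distinguished. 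Combining these structural invariants with the direct central-series computation should yield the complete separation, and then observing that each $\mathfrak f_j$ is defined over $\mathbb{Q}$ (hence admits a lattice by~\cite{Mal}) completes the picture, establishing the ``if and only if'' and the uniqueness asserted in the theorem.
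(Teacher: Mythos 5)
Your overall strategy coincides with the paper's: pass from the complex equations \eqref{structure-eq-WnN} to real structure equations via an adapted real basis (the paper's Table~\ref{tab:cambios-base-real}), and then separate the resulting algebras by a battery of isomorphism invariants (the paper's Table~\ref{tab:real-invariants}). However, two concrete points in your proposal are wrong or underestimated. First, your ``key observation'' that the real isomorphism class cannot depend on the continuous parameter $B$ is false. For the discrete data $(\varepsilon,\nu,a)=(0,1,1)$ the underlying real algebra is $\mathfrak f_7^0$ when $B\in\mathbb R$ but $\mathfrak f_7^1$ when $\Imag B>0$, and for $(\varepsilon,\nu,a)=(0,1,0)$ the values $B=0$ and $B=1$ give the non-isomorphic algebras $\mathfrak f_5^0$ and $\mathfrak f_5^1$. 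So the partition into real isomorphism classes is genuinely finer than the discrete data, and the naive coframe $\omega^k=e^{2k-1}-i\,e^{2k}$ will not exhibit the eleven classes; the bases in Table~\ref{tab:cambios-base-real} mix and rescale coordinates using $a$, $B$, $b_1$, $b_2$ precisely to absorb the parameters where they can be absorbed and to expose $\gamma$ where they cannot.

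Second, the pair $\mathfrak f_6$, $\mathfrak f_8$ defeats essentially every invariant on your list: they share the ascending type $(2,3,5,6,8)$, the descending type $(8,6,5,3,2)$, all Betti numbers $b_1=2$, $b_2=3$, $b_3=6$, $b_4=8$ (hence also all the rank data of $d$ you mention, which is determined by the Betti numbers), the number $n_d$ of decomposable exact $2$-forms, and even the quotient by the center, which is $\mathfrak h_{26}^+$ in both cases. Your complementary idea via Corollary~\ref{equiv-WnN-8D-inicio} fails here too, since all WnN structures on both algebras have $\varepsilon=1$ and thus induce the same $6$-dimensional SnN quotient. The paper resolves this case with an invariant absent from your list: the number of functionally independent generalized Casimir operators, $n_I(\frg)=\dim\frg-\text{rank}\,C_{\frg}$, computed from the coadjoint representation, which gives $n_I(\mathfrak f_6)=4\neq 2=n_I(\mathfrak f_8)$. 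Your declared fallback---parametrizing a would-be isomorphism compatible with the filtrations and deriving a contradiction---would in principle close this gap, but as written the proposal leaves the genuinely hard case to a method you yourself flag as the bulk of the computational effort, without evidence that it terminates in a contradiction; identifying a computable invariant (as the paper does) is what actually completes the argument.
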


Let us recall that every $8$-dimensional NLA $\mathfrak g$ with a WnN complex structure is determined by the complex structure equations~\eqref{structure-eq-WnN}, where the set of parameters $(\varepsilon,\delta,\nu,a,B)$ satisfies Theorem~\ref{main-theorem} (see also Propositions~\ref{reduc-nu-0} and~\ref{reduc-nu-1}). 
In Table~\ref{tab:cambios-base-real} we define explicit real bases $\{e^k\}_{k=1}^8$ that allow to arrive directly from the complex structure equations to the real Lie algebras listed in Theorem~\ref{main-theorem-2}, depending on the value of the tuple $(\varepsilon,\delta,\nu,a,B)$. In this table, we simply write $(\varepsilon,\nu,a,B)$ instead of $(\varepsilon,\delta,\nu,a,B)$, because one always has $\delta\in\{-1,1\}$. We observe that the value $\nu=0$ gives rise to the Lie algebras $\mathfrak f_1$, $\mathfrak f_2$, $\mathfrak f_3$, and $\mathfrak f_4^{\,\gamma}$ in Theorem~\ref{main-theorem-2}, whereas $\nu=1$ provides the Lie algebras $\mathfrak f_5^{\gamma}$, $\mathfrak f_6$, $\mathfrak f_7$, and $\mathfrak f_8$, where $\gamma\in\{0,1\}$. 
Moreover, one can go the other way round, in the sense that Table~\ref{tab:cambios-base-real} allows to construct the whole space of WnN complex
structures on each Lie algebra up to equivalence.

\begin{table}[h]
\centering
\renewcommand{\arraystretch}{1.2}
\renewcommand{\tabcolsep}{2pt}
\begin{tabular}{|c|l|c|}
\hline
$(\varepsilon,\nu,a,B)$ & \ Real basis $\{e^k\}_{k=1}^8$ & NLA \\
\hline\hline
$(0, 0, 0, 0)$
& $\begin{array}{ll}
	\omega^1 = e^1-i\,e^3, & \quad\omega^3 = \frac12\,e^2+2\,i\,\delta\,e^6,\\
	\omega^2 =e^4+i\,e^5, & \quad\omega^4 =e^7-i\,e^8.\\
	\end{array}$
& $\mathfrak f_1$ \\
\hline\hline
$(1, 0, 0, 0)$
& $\begin{array}{ll}
	\omega^1 = \frac{\sqrt 2}{2}\,(e^1+i\,e^2), & \quad\omega^3 =e^3+2\,i\,\delta\,e^6,\\
	\omega^2 =\sqrt 2\,(e^4+i\,e^5), & \quad\omega^4 =e^7-i\,e^8.\\
	\end{array}$
& $\mathfrak f_2$ \\
\hline\hline
$(0, 0, 0, 1)$
& $\begin{array}{ll}
	\omega^1 = e^1 + i\,e^2, & \quad\omega^3 = 2\,(e^3+4\,i\,\delta\,e^7),\\
	\omega^2 = 4\,(e^5+i\,e^6), & \quad\omega^4 = 2\,(e^8-i\,e^4).\\
	\end{array}$
& $\mathfrak f_3$ \\
\hline\hline
$(1, 0, 0, 1)$
& $\begin{array}{ll}
	\omega^1 = e^1 + i\,e^2, &  \quad\omega^3 = 2\,(e^3+4\,i\,\delta\,e^6),\\
	\omega^2 = 4\,(e^4+i\,e^5), & \quad\omega^4 = 2\,(e^7-i\,e^3+i\,e^8).\\
	\end{array}$
& $\mathfrak f_2$ \\
\hline\hline
$\begin{array}{c}
(0, 0, 1, B\in\{0,1\})\\
(1, 0, 1, B\in\mathbb R^{\geq 0})
\end{array}$
& $\begin{array}{l}
	\omega^1 = e^1 + i\,e^2,\\ 
	\omega^2 = 2\,\big(2\,e^4+2\,i\,e^6+i\,B\,(1-\varepsilon)\,e^2\big), \\
	\omega^3 = 2\,\big(e^3+4\,i\,\delta\,(e^5+e^7)\big), \\ 
	\omega^4 = 4\,\big(e^5-e^7+i\,(e^8-\varepsilon\,\frac{B}{2}\,e^3)\big).
	\end{array}$
& $\mathfrak f_4^{\,\varepsilon}$ \\
\hline\hline
$(0, 1, 0, B\in\{0,1\})$
& $\begin{array}{ll}
	\omega^1 = e^1 + i\,e^2, &  \quad\omega^3 = e^3+4\,i\,\delta\,e^8,\\
	\omega^2 = 2\,(e^4+i\,e^5), & \quad\omega^4 = -4\,(e^6+i\,e^7).\\
	\end{array}$
& $\mathfrak f_5^{\,B}$ \\
\hline\hline
$(1, 1, 0, B\in\mathbb R^{\geq 0})$
& $\begin{array}{l}
	\omega^1 = e^1 + i\,e^2,\\ 
	\omega^2 = 4\,(e^4+i\,e^5), \\
	\omega^3 = 2\,(e^3+4\,i\,\delta\,e^6), \\ 
	\omega^4 = 16\,(e^8-i\,e^7)+4\,\delta\,(e^4+i\,e^5)-2\,i\,B\,e^3.
	\end{array}$
& $\mathfrak f_6$ \\
\hline\hline
$(0, 1, 1, B\in\mathbb R^{\geq 0} )$
& $\begin{array}{l}
	\omega^1 = -(e^1 + i\,e^2),\\ 
	\omega^2 = -(e^4-2\,B\,e^1 +i\,e^5), \\
	\omega^3 = \,(\frac12\,e^3+2\,i\,\delta\,e^6), \\ 
	\omega^4 = (e^7-e^6)+2\,B (e^4-2\,B\,e^1)+i\, e^8.
	\end{array}$
& $\mathfrak f_7$ \\
\hline\hline
$\begin{array}{l}
	(0, 1, 1, \Imag B>0)\\ 
	B=b_1+i\, b_2\\ 
	b_2>0	
	\end{array}$
& $\begin{array}{l}
	\omega^1 = -2\,b_2\,(e^1 + i\,e^2),\\ 
	\omega^2 = 4\,b_2 \big(b_1\,e^1 +\frac{b_2}{2}\,e^2 -b_2\,e^4 +i\,b_2 (\frac12\,e^1 - e^5) \big), \\
	\omega^3 = b_2\,(e^3+16\,i\,\delta\,b_2^2\,e^6), \\ 
	\omega^4 = -8\,b_2\big( b_1(b_1\,e^1 +\frac{b_2}{2}\,e^2 - b_2\,e^4) 
		-b_2^2( \frac12 e^5 \!-\! e^6 + e^7)  \\ 
	\quad\qquad\qquad\   -i\,b_2^2\, ( \frac12 e^4+e^8) \big). 
	\end{array}$
&  $\mathfrak f_7$ \\
\hline\hline
$(1, 1, a>0, B\in\mathbb C)$
& $\begin{array}{l}
	\omega^1 = -\frac{a}{4}\,(e^1+i\,e^2),\\ 
	\omega^2 = -\frac{a^3}{16}\,(e^4+i\,e^5), \\
	\omega^3 = \frac{a^2}{8}\,\big(e^3+i\,\delta\,\frac{a^2}{4}\,e^6\big), \\ 
	\omega^4 = \frac{a^5}{64}\,(e^7-e^6+i\,e^8)-\frac{i\,a^2\,B}{8}\,e^3-\delta\,\frac{a^3}{16}\,(e^4+i\,e^5).
	\end{array}$
& $\mathfrak f_8$ \\
\hline
\end{tabular}
\medskip
\caption{Real nilpotent Lie algebras and complex structures}
\label{tab:cambios-base-real}
\end{table}

\begin{remark}
When $\nu=a=B=0$ in~\eqref{structure-eq-WnN}, there is a product of a $6$-dimensional NLA with an SnN complex structure and the $2$-dimensional Abelian Lie algebra. Hence, we follow the proof of~\cite[Proposition 2.4]{UV-SnN} to define the corresponding real basis $\{e^k\}_{k=1}^8$ in Table~\ref{tab:cambios-base-real}. 
In particular, note that
$$\mathfrak f_1= \frh^-_{19}\times\mathbb R^2, \qquad \mathfrak f_2= \frh^+_{26}\times\mathbb R^2,$$ 
where $\mathfrak h^-_{19}= (0,\,0,\,0,\,12,\,23,\,14-35)$ and $\frh^+_{26}=(0,\,0,\,12,\,13,\,23,\,14+25)$. We also notice that $\mathfrak f_1$ and $\mathfrak f_2$ are, together with $\mathfrak f_3$, the only decomposable Lie algebras in Theorem~\ref{main-theorem-2}. 
\end{remark}

\begin{remark}
There are two sets of complex parameters 
$(\varepsilon,\delta,\nu,a,B)$ in Table~\ref{tab:cambios-base-real} whose underlying Lie algebra is $\mathfrak f_2$, namely, $(1,\pm1, 0, 0, 0)$ 
and $(1,\pm 1, 0, 0, 1)$. The former corresponds to the WnN structures on $\mathfrak f_2=\mathfrak h_{26}^+\times\mathbb R^2$ that are products of complex structures
on $\mathfrak h_{26}^+$ and 
$\mathbb R^2$. The latter provides totally new complex structures on the direct product~$\mathfrak h_{26}^+\times\mathbb R^2=\mathfrak f_2$.
\end{remark}

\begin{remark}
Recall that the complex parameter $\delta$ provides two non-equivalent complex structures for 
$\delta=-1$ and $\delta=1$. Consequently, each of the Lie algebras $\mathfrak f_1$, $\mathfrak f_3$, 
$\mathfrak f_5^0$, and~$\mathfrak f_5^1$ admits exactly two non-equivalent WnN complex structures, and the Lie algebras $\mathfrak f_2$ and $\mathfrak f_4^0$ have four. In contrast, there is an infinite number of non-equivalent WnN complex structures on $\mathfrak f_4^1$, $\mathfrak f_6$, $\mathfrak f_7$ and $\mathfrak f_8$.
\end{remark}

To finish the proof of Theorem~\ref{main-theorem-2} one has to check that the Lie algebras $\mathfrak f_1, \,\mathfrak f_2,\,\ldots,\mathfrak f_8$ are pairwise non-isomorphic. The following invariants will be specially useful for this purpose:
\begin{itemize}
\item Ascending type: it is the $s$-tuple $(m_1,\ldots, m_s)$ associated to the ascending central series~\eqref{ascending-central-series} of an $s$-step NLA $\mathfrak g$, defined by $m_k=\text{dim}\,\frg_k$, for $k=1,\ldots,s$. 
\item Descending type: similar to the above, but making use of the descending central series $\{\mathfrak g^k\}_{k\geq 1}$ of $\mathfrak g$ instead, which is given by $\mathfrak g^0=\mathfrak g$ and $\mathfrak g^k=[\mathfrak g^{k-1},\mathfrak g]$, for $k\geq 1$.
\item Betti numbers $b_k$: they correspond to the dimensions of the Chevalley-Eilenberg cohomology groups of the Lie algebra $\mathfrak g$, namely,
$$H^k(\mathfrak g)=
	\frac{\text{Ker}\big\{ d:\bigwedge^k\mathfrak g^*\to\bigwedge^{k+1}\mathfrak g^* \big\}}
	{\text{Im}\big\{ d:\bigwedge^{k-1}\frg^*\to\bigwedge^k\frg^* \big\}},
	\quad 0\leq k\leq\text{dim}\,\frg.$$
\item Number $n_d$: maximum number of linearly independent  decomposable $d$-exact $2$-forms $\alpha$, i.e. $\alpha=d\gamma=\beta_1\wedge\beta_2$ for some $\beta_1,\beta_2,\gamma\in\frg^*$.
\end{itemize}

Since the Lie algebras in Theorem~\ref{main-theorem-2} are discrete, the invariants above can be calculated using an appropriate mathematical software.
We provide their values in Table~\ref{tab:real-invariants}.

\begin{table}[h]
\centering
\renewcommand{\arraystretch}{1.4}
\renewcommand{\tabcolsep}{8pt}
\begin{tabular}{|c|c|c|cccc|c|}
\hline
NLA & Ascending type & Descending type & $b_1$ & $b_2$ & $b_3$ & $b_4$ & $n_d$ \\
\hline\hline
$\mathfrak f_1$ & $(3,5,8)$ & $(8,3,1)$ & 5 & 12 & 19 & 22 & 2 \\
\hline
$\mathfrak f_2$ & $(3,5,6,8)$ & $(8,4,3,1)$ & 4 & 9 & 16 & 20 & 3 \\
\hline
$\mathfrak f_3$ & $(3,5,8)$ & $(8,4,1)$ & 4 & 10 & 18 & 22 & 3 \\
\hline
$\mathfrak f_4^0$ & $(3,5,8)$ & $(8,5,3)$ & 3 & 7 & 13 & 16 & 4 \\
\hline
$\mathfrak f_4^1$ & $(3,5,6,8)$ & $(8,6,5,3)$ & 2 & 6 & 13 & 16 & 5 \\
\hline
$\mathfrak f_5^0$ & $(3,5,8)$ & $(8,5,3)$ & 3 & 7 & 14 & 18 & 4 \\
\hline
$\mathfrak f_5^1$ & $(3,5,8)$ & $(8,5,3)$ & 3 & 7 & 14 & 18 & 3 \\
\hline
$\mathfrak f_6$ & $(2,3,5,6,8)$ & $(8,6,5,3,2)$ & 2 & 3 & 6 & 8 & 3 \\
\hline
$\mathfrak f_7$  & $(3,5,8)$ & $(8,5,3)$ & 3 & 7 & 13 & 16 & 3   \\
\hline
$\mathfrak f_8$ & $(2,3,5,6,8)$ & $(8,6,5,3,2)$ & 2 & 3 & 6 & 8 & 3 \\
\hline
\end{tabular}
\medskip
\caption{Some invariants of the NLAs listed in Theorem~\ref{main-theorem-2} 
}
\label{tab:real-invariants}
\end{table}

A direct consequence of Table~\ref{tab:real-invariants} is that most of the NLAs listed in Theorem~\ref{main-theorem-2} are not isomorphic. 
Indeed, the only exceptions are the NLAs $\mathfrak f_6$ and $\mathfrak f_8$, for which all the previous invariants coincide.
Therefore, to complete the proof of Theorem~\ref{main-theorem-2} we need to show the following:

\begin{lemma}
	The Lie algebras $\mathfrak f_6$ and $\mathfrak f_8$ are not isomorphic.
\end{lemma}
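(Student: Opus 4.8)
The plan is to prove that no Lie algebra isomorphism $\varphi\colon\mathfrak f_6\to\mathfrak f_8$ can exist. Since every invariant collected in Table~\ref{tab:real-invariants} coincides for these two algebras, I must exploit a finer structure. The starting observation is that for both $\mathfrak f_6$ and $\mathfrak f_8$ the ascending central series and the descending central series determine one and the same flag
$$F_1\subset F_2\subset F_3\subset F_4\subset\mathfrak g,\qquad (\dim F_k)_{k=1}^{4}=(2,3,5,6),$$
with $F_1=\mathcal Z(\mathfrak g)=\mathfrak g^4$, $F_2=\mathfrak g^3$, $F_3=\mathfrak g^2$ and $F_4=\mathfrak g^1=[\mathfrak g,\mathfrak g]$. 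As both series are preserved by any isomorphism, $\varphi$ must preserve this flag, so in the ordered basis $e_1,\dots,e_8$ (the generators $e_1,e_2$ first) it is block triangular, and my task is to show that the resulting constraints are inconsistent.

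First I would pin down the diagonal blocks. Propagating the defining relations down the flag --- starting from $\varphi([e_1,e_2])$ and then the relations landing in $F_4/F_3$, in $F_3/F_2$ and in $F_2/F_1$ --- expresses the block on each graded quotient in terms of the $2\times2$ matrix $A$ induced by $\varphi$ on $\mathfrak g/F_4=\langle\bar e_1,\bar e_2\rangle$. The relations at the $F_2/F_1$ level (namely $[e_1,e_4]=[e_2,e_5]=-e_6$ together with $[e_1,e_5]=[e_2,e_4]=0$ in $\mathfrak f_6$) force the columns of $A$ to be orthogonal and of equal length, i.e.\ $A$ is conformal. This step mirrors the fact that the associated graded algebras agree: one checks that $\mathfrak f_6$ is naturally graded (every bracket is homogeneous for the weights $1,1,2,3,3,4,5,5$ attached to $e_1,\dots,e_8$) and that $\operatorname{gr}\mathfrak f_8\cong\mathfrak f_6$, so the leading-order part of the problem does have solutions and no contradiction can appear yet.

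The contradiction must therefore come from the subleading terms, and this is where the genuine difference between the two algebras resides: $\mathfrak f_6$ is Carnot, whereas $\mathfrak f_8$ is \emph{not} isomorphic to its associated graded. Concretely, the brackets of $\mathfrak f_8$ carry ``anomalous'' top-degree pieces absent in $\mathfrak f_6$, namely the $-2\,e_7$ in $[e_1,e_4]$ and the $-e_8$ in each of $[e_1,e_5]$ and $[e_2,e_4]$. I would hence carry the computation all the way up to the top quotient $F_1=\langle e_7,e_8\rangle$, comparing the $e_7$- and $e_8$-components imposed by the relations. Writing $\varphi$ with its (so far free) lower-order corrections and imposing $\varphi([e_i,e_j]_{\mathfrak f_6})=[\varphi e_i,\varphi e_j]_{\mathfrak f_8}$ for the pairs $(1,4),(1,5),(2,4)$ together with the degree-$5$ relations yields an overdetermined linear system for those corrections. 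The decisive point is that the available parameters --- equivalently, the change-of-basis (coboundary) freedom within the filtration --- can only feed an $e_7$-contribution into the slot governed by $[e_2,e_4]$, while the structure of $\mathfrak f_8$ demands an $e_8$-contribution there; these cannot be reconciled, and this is the desired contradiction.

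The main obstacle is essentially one of bookkeeping: because every graded and numerical invariant agrees, the obstruction is a genuinely second-order (filtered) phenomenon, so one cannot avoid tracking the full non-homogeneous isomorphism through all the filtration steps, and the incompatibility surfaces only at the last one. The conceptual reason that such an incompatibility is forced to exist is that the first-order deformation class of $\mathfrak f_8$ relative to $\operatorname{gr}\mathfrak f_8\cong\mathfrak f_6$ is a \emph{nonzero} class in $H^2(\mathfrak f_6;\mathfrak f_6)$ --- no weight-raising change of variables removes the anomalous brackets --- so $\mathfrak f_8$ cannot be isomorphic to the Carnot algebra $\mathfrak f_6$.
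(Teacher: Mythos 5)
Your route is genuinely different from the paper's, and its key claims are in fact correct. The paper settles the lemma with a single numerical invariant: it forms the matrix $C_{\mathfrak g}$ of the coadjoint representation and checks that $\mathrm{rank}\, C_{\mathfrak f_6}=4$ while $\mathrm{rank}\, C_{\mathfrak f_8}=6$, so the numbers of functionally independent generalized Casimir operators differ ($4$ versus $2$); this is short, mechanical, and software\--checkable. You instead argue structurally, and the structural facts you invoke do hold: $\mathfrak f_6$ is Carnot for the weights $1,1,2,3,3,4,5,5$; $\mathrm{gr}\,\mathfrak f_8\cong\mathfrak f_6$ (an explicit graded isomorphism is $e_1\mapsto e_2$, $e_2\mapsto e_1$, $e_3\mapsto -e_3$, $e_4\mapsto -e_5$, $e_5\mapsto -e_4$, $e_6\mapsto -e_6$, $e_7\mapsto e_7$, $e_8\mapsto -e_8$); the weight\--raising part of the bracket of $\mathfrak f_8$ is exactly $\mu_1(e_1,e_4)=-2e_7$, $\mu_1(e_1,e_5)=\mu_1(e_2,e_4)=-e_8$; and your ``decisive point'' is right: for any weight\--one\--raising linear map $M$ one has $Me_2\in\langle e_3\rangle$, $Me_4\in\langle e_6\rangle$, $[e_2,e_4]_0=0$, so $dM(e_2,e_4)=[Me_2,e_4]_0+[e_2,Me_4]_0\in\langle e_7\rangle$ (since $[e_3,e_4]_0=-e_7$ and $[e_2,e_6]_0=e_7$ in $\mathrm{gr}\,\mathfrak f_8$), whereas $\mu_1(e_2,e_4)=-e_8$; hence $\mu_1$ is not a coboundary, in either sign convention. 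What your approach buys is an explanation of \emph{why} the algebras differ (exactly one of them is isomorphic to its associated graded); what the paper's buys is brevity.

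That said, as written your text is an outline, and the one step that genuinely needs an argument is asserted rather than proved: the implication ``nonzero first\--order class $\Rightarrow$ no isomorphism.'' To close it, first identify $\mathfrak f_6$ with $(V,\mu_0)=\mathrm{gr}\,\mathfrak f_8$ via the graded isomorphism above, and suppose $\varphi\colon (V,\mu_0)\to(V,\mu_0+\mu_1)=\mathfrak f_8$ is an isomorphism. Since $\varphi$ preserves the lower central series, and for both algebras this coincides with the weight filtration, $\varphi$ decomposes as $\varphi=\varphi_0+\varphi_1+\cdots$ with $\varphi_i$ raising weight by $i$. The weight\--zero component of $\varphi\circ\mu_0=(\mu_0+\mu_1)\circ(\varphi\otimes\varphi)$ says $\varphi_0\in\mathrm{Aut}_{\mathrm{gr}}(V,\mu_0)$; the weight\--one component, after conjugating by $\varphi_0$ and setting $M=\varphi_0^{-1}\varphi_1$, gives $\mu_1=-d\bigl(\varphi_0 M\varphi_0^{-1}\bigr)$, a coboundary of a weight\--one\--raising map --- contradicting the slot computation. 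This normalization is not cosmetic: in your first, ``direct bookkeeping'' thread the leading term is a general (conformal) block, and graded automorphisms rotate the top layer $\langle e_7,e_8\rangle$, so the statement ``the $[e_2,e_4]$ slot can only receive $e_7$'' is not invariant until the graded part has been normalized to the identity. With that lemma inserted, and the two one\--line verifications (the weights on $\mathfrak f_6$, the isomorphism $\mathrm{gr}\,\mathfrak f_8\cong\mathfrak f_6$) written out, your argument is a complete and correct proof.
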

\begin{proof}
Let us show that the number of functionally independent generalized Casimir operators is different for the NLAs $\mathfrak f_6$ and $\mathfrak f_8$.

Let $\mathfrak g$ be a Lie algebra of dimension $m$ defined, in terms of some basis $\{x_k\}_{k=1}^m$, by the Lie brackets $[x_i,x_j]=\sum_{k=1}^mc^k_{ij} x_k$, for $1\leq i<j\leq m$. The vectors
$$\hat{X}_k=\sum_{i,j=1}^mc^j_{ki} x_j\frac{\partial}{\partial x_i}, \quad 1\leq k\leq m,$$
generate a basis of the coadjoint representation of $\mathfrak g$. If we consider the matrix $C_{\mathfrak g}$ constructed by rows from the coefficients of these vectors, the number of functionally independent generalized Casimir operators, denoted by $n_I(\mathfrak g)$, can be computed as $n_I(\mathfrak g)=m-\text{rank}\,C_{\mathfrak g}$ (see \cite{SW} for further details).

We first use~\eqref{rel-diferencial-corchete} to find the Lie brackets of $\mathfrak f_6$ from the structure equations of this algebra given in Theorem~\ref{main-theorem-2}. Then, from the expressions of the vectors 
$\hat{X}_1,\ldots, \hat{X}_8$, one finds
$$C_{\mathfrak f_6}=\begin{pmatrix}
0 & -x_3 & -x_4 & -x_6 & 0 & -x_7 & 0 & 0 \\
x_3 & 0 & -x_5 & 0 & -x_6 & -x_8 & 0 & 0 \\
x_4 & x_5 & 0 & x_8 & -x_7 & 0 & 0 & 0 \\
x_6 & 0 & -x_8 & 0 & 0 & 0 & 0 & 0\\
0 & x_6 & x_7 & 0 & 0 & 0 & 0 & 0\\
x_7 & x_8 & 0 & 0 & 0 & 0 & 0 & 0\\
0 & 0 & 0 & 0 & 0 & 0 & 0 & 0\\
0 & 0 & 0 & 0 & 0 & 0 & 0 & 0
\end{pmatrix}.$$
Proceeding similarly for the Lie algebra $\mathfrak f_8$, we get the matrix
$$C_{\mathfrak f_8}=\begin{pmatrix}
0 & -x_3 & -x_4 & -(x_6+2\,x_7) & -x_8 & -x_8 & 0 & 0 \\
x_3 & 0 & -x_5 & -x_8 & -x_6 & x_7 & 0 & 0 \\
x_4 & x_5 & 0 & -x_7 & -x_8 & 0 & 0 & 0 \\
x_6+2\,x_7 & x_8 & x_7 & 0 & 0 & 0 & 0 & 0\\
x_8 & x_6 & x_8 & 0 & 0 & 0 & 0 & 0\\
x_8 & -x_7 & 0 & 0 & 0 & 0 & 0 & 0\\
0 & 0 & 0 & 0 & 0 & 0 & 0 & 0\\
0 & 0 & 0 & 0 & 0 & 0 & 0 & 0
\end{pmatrix}.$$
One can now check that the rank of $C_{\mathfrak f_6}$ is equal to 4, whereas that of $C_{\mathfrak f_8}$ is equal to 6. Consequently, $n_I(\mathfrak f_6)=4\neq 2=n_I(\mathfrak f_8)$ and these two NLAs cannot be isomorphic. 
\end{proof}

This completes the proof of Theorem~\ref{main-theorem-2}. Observe that all the NLAs we have found have rational coefficients. This guarantees the existence of a lattice $\Gamma$ in the corresponding $8$-dimensional real Lie group $G$, thus making $(\Gamma\backslash G, J)$ a complex nilmanifold with an invariant complex structure (of WnN type).

\begin{remark}
The $8$-dimensional NLAs with $b_1=2$ that admit complex structures have been recently classified in~\cite{Mill24}. Such classification can be recovered 
from Theorem~\ref{main-theorem-2} and previous results on complex structures of SnN type obtained in~\cite{LUV-SnN-2}. In fact, the condition $b_1=2$ implies that the complex structure $J$ is necessarily non-nilpotent by~\cite[Proposition~15]{CFGU-dolbeault}. Now, if $J$ is WnN then the NLA is isomorphic to $\mathfrak f_4^1$, $\mathfrak f_6$ or $\mathfrak f_8$ by Theorem~\ref{main-theorem-2}, whereas if $J$ is SnN then the NLA is isomorphic to $\mathfrak g_{12}^0$ or $\mathfrak g_{12}^1$ (see \cite[Theorem 1.1]{LUV-SnN-2}).

We also observe that the relation between the aforementioned Lie algebras and those given in~\cite{Mill24} is as follows: 
$\mathfrak f_4^1\cong\mathcal L(2,4)$, $\mathfrak f_6\cong \mathfrak n_1^+(5)$, $\mathfrak f_8\cong \mathfrak n_1^+(5)_1$, $\mathfrak g_{12}^0\cong \mathfrak b_0$ and $\mathfrak g_{12}^1\cong \mathfrak b_1$. 
\end{remark}

%

\section{Pseudo-K\"ahler and neutral Calabi-Yau structures}\label{clasi-pseudoK}

\noindent In this section we study the existence of pseudo-K\"ahler and neutral Calabi-Yau structures on complex nilmanifolds $(M=\Gamma\backslash G,J)$ endowed with non-nilpotent complex structures. 
We prove that the 8-dimensional counterexample found in~\cite{LU-PuresAppl} to a conjecture in~\cite{CFU} is unique in the class of SnN complex structures. However, we find an infinite number
of new counterexamples in the class of weakly non-nilpotent complex structures.
These provide, in addition, a new infinite family of examples of (Ricci-flat) non-flat neutral Calabi-Yau structures in real dimension 8 that do not come from a hypersymplectic structure, 
as they are not complex symplectic manifolds.
We finish the section providing a topological obstruction that any pseudo-K\"ahler nilmanifold with an invariant complex structure satisfies when its real dimension is less than or equal to 8.

\medskip
We first recall some general definitions. Let $X=(M,J)$ be a complex manifold of complex dimension~$n$. 
A \emph{pseudo-K\"ahler} structure on~$X$ 
is a pseudo-Riemannian metric~$g$ that satisfies the following two conditions:


$\bullet$  $g$ is compatible with $J$, i.e. $g(JU,JV)=g(U,V)$ for any vector fields $U,V$ on $M$; 


$\bullet$  $J$ is parallel with respect to the Levi-Civita connection $\nabla$ of $g$, i.e. $\nabla J=0$.


The latter condition is equivalent to the 2-form 
$$F(U,V)=g(U,JV)$$ 
being closed. 
Thus, any pseudo-K\"ahler manifold is, in particular, a symplectic manifold.
The signature of $g$ is of the form $(2k, 2n-2k)$, for $0\leq k\leq n$.

When the complex dimension of the manifold is even, namely, $n=2m$, there are some special classes of pseudo-K\"ahler structures:

$\bullet$ A \emph{neutral K\"ahler} structure is a pseudo-K\"ahler metric $g$ with signature $(2m,2m)$.

$\bullet$  A \emph{neutral Calabi-Yau} structure is a neutral K\"ahler structure $g$ with a nowhere vanishing form $\Phi$ of bidegree $(2m,0)$ with respect to $J$
satisfying $\nabla \Phi=0$, where $\nabla$ is the Levi-Civita connection of the metric $g$. This type of manifolds are Ricci-flat.

We observe that many neutral Calabi-Yau manifolds arise from the so-called hypersymplectic structures, introduced by Hitchin in \cite{Hit} (see also \cite{DS}). 
We do not recall the definition here, but we will use the fact that any hypersymplectic structure has a \emph{complex symplectic structure} $\Omega$. This is a closed $(2,0)$-form $\Omega$ satisfying the non-degeneration condition $\Omega^m\neq0$. 

\smallskip

From now on, we will focus on nilmanifolds $M=\Gamma\backslash G$ endowed with invariant complex structures $J$. 
Suppose that the complex nilmanifold $X=(M,J)$ of complex dimension $n$ has an invariant pseudo-K\"ahler metric $g$ with $2$-form $F$. By~\cite{Salamon}, there always exists a closed (non-zero) invariant form $\Phi$ on $M$ of bidegree $(n,0)$ with respect to $J$, so $\nabla \Phi=0$.
Therefore, any invariant pseudo-K\"ahler metric~$F$ on a complex nilmanifold~$X$ is Ricci-flat (see \cite{FPS}).

\smallskip

In \cite{CFU}, it is conjectured that the existence of a pseudo-K\"ahler metric on a complex nilmanifold $X=(M,J)$ implies the nilpotency of the complex structure $J$, in the sense of Definition~\ref{tipos_J}.
This holds true up to dimension $n=3$ \cite{CFU}, but a counterexample is given by the authors in \cite{LU-PuresAppl}. We next show that the counterexample in \cite{LU-PuresAppl} is unique in the class of strongly non-nilpotent complex structures. Nonetheless, we find an infinite family of new counterexamples in the class of weakly non-nilpotent complex structures. Indeed, the following theorem provides the classification of non-nilpotent complex structures in real dimension eight admitting (invariant or not) pseudo-K\"ahler metrics.

\begin{theorem}\label{class-pK-nN-dim8}
Let $M$ be an $8$-dimensional nilmanifold endowed with 
an invariant non-nilpotent complex structure $J$. Denote by $\mathfrak g$ the Lie algebra associated to $M$. There
exists a pseudo-K\"ahler structure on $X=(M,J)$ if and only if there is a $(1,0)$-basis $\{\omega^k\}_{k=1}^4$ for $(\frg,J)$
where the structure equations are one of the following:
\begin{itemize}
\item[i)] If $J$ is weakly non-nilpotent,
\begin{equation}\label{ecus-pk-WnN}
	\begin{gathered}[c]
	d\omega^1 = 0,\quad \
	d\omega^2 = \omega^{13}+\omega^{1\bar{3}},\quad \
	d\omega^3 = i \delta\,(\omega^{1\bar{2}} - \omega^{2\bar{1}}),
	\\
	d\omega^4 = a\,\omega^{12}+\omega^{23}+B\,\omega^{1\bar{1}}+\omega^{2\bar{3}},
	\end{gathered}
\end{equation}
where $\delta=\pm 1$ and $a=0$ with $B\in\{0,1\}$, $a=1$ with $B\in\mathbb R^{\geq 0}$, or $a=1$ with $B\in\mathbb C$ satisfying $\Imag B>0$.
Moreover, any invariant pseudo-K\"ahler metric on $X=(M,J)$ with fundamental form $F$ satisfies
\begin{equation*}
	\begin{split}
	F =&\ i\,u\,\omega^{1\bar{1}} - i\,s\,\omega^{2\bar{2}} - i\,\delta\,r\,\omega^{3\bar{3}} 
	+ v\,\omega^{1\bar{2}} - v\,\omega^{2\bar{1}} + i\,a\,\delta(r-i\,s)\,\omega^{1\bar{3}} \\[-2pt]
	&+ i\,a\,\delta(r+i\,s)\,\omega^{3\bar{1}} + (r+i\,s)\,\omega^{1\bar{4}} - (r-i\,s)\,\omega^{4\bar{1}},
	\end{split}
\end{equation*}
where $r,s,u,v\in\mathbb R$ and $rs\neq 0$.
\item[ii)] If $J$ is strongly non-nilpotent,
\begin{equation*}\label{pk-SnN}
	\begin{gathered}[c]
	d\omega^1=0,\quad \
	d\omega^2=\omega^{14}+\omega^{1\bar 4},\quad \
	d\omega^3=\omega^{12}+\omega^{1\bar 2}-\omega^{2\bar 1},
	\\
	d\omega^4=i\,\omega^{1\bar 3} - i\,\omega^{3\bar 1}.
	\end{gathered}
\end{equation*}
Moreover, the fundamental form $F$ of any invariant pseudo-K\"ahler metric on $X=(M,J)$ is
\begin{equation*}
	\begin{split}
	F = &\ i\,r\,\omega^{1\bar{1}} + i\,s\,\omega^{4\bar{4}}  
	+ u\,\omega^{1\bar{2}} - u\,\omega^{2\bar{1}}  
	+ v\, \omega^{1\bar{3}} - v\, \omega^{3\bar{1}} 
	- s\,\omega^{2\bar{3}} + s\, \omega^{3\bar{2}},
	\end{split}
\end{equation*}
where $r,s,u,v\in\mathbb R$ and $rs\neq 0$.
\end{itemize}
\end{theorem}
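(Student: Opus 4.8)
The plan is to translate the pseudo-Kähler condition into purely algebraic data and then run it through the two classifications already at our disposal. Recall from the definitions above that an invariant pseudo-Kähler metric on $X=(M,J)$ is the same as a real $(1,1)$-form $F$ on $\frg$ that is closed, $dF=0$, and non-degenerate, i.e. $F^4\neq0$. First I would reduce the existence of any (a priori non-invariant) pseudo-Kähler metric to the invariant case: the symmetrization of a pseudo-Kähler form is an invariant closed real $(1,1)$-form lying in the same de Rham class (by Nomizu's theorem the inclusion of invariant forms induces an isomorphism on cohomology), and since this class has nonzero fourth cup power — its top power integrates to the symplectic volume — the symmetrized form is again non-degenerate. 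Hence it suffices to classify invariant $F$. Since $J$ is non-nilpotent, by Remark~\ref{classes-of-J} it is WnN or SnN, and I would treat the two cases separately.

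\textbf{The WnN case.} Here I would invoke Theorem~\ref{main-theorem}, so that $(\frg,J)$ is given by \eqref{structure-eq-WnN} for some admissible tuple $(\varepsilon,\delta,\nu,a,B)$. Writing a general real $(1,1)$-form $F=i\sum_{j,k}h_{j\bar k}\,\omega^{j\bar k}$ with Hermitian coefficient matrix $(h_{j\bar k})$, I would substitute the structure equations into $dF$ and collect the coefficients of each basis $3$-form $\omega^{jk\bar l}$ and $\omega^{j\bar k\bar l}$; requiring them to vanish yields a linear system on the $h_{j\bar k}$ whose solvability is coupled to the discrete parameters. The decisive point is to combine this system with non-degeneracy: since $F$ must pair the $Z_4$-direction, some $\omega^{j\bar4}$-coefficient is forced to be nonzero, and closedness of the associated terms then forces $\varepsilon=0$ and $\nu=1$. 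Feeding these constraints back, the system collapses to the displayed four-parameter family $F$ with $r,s,u,v\in\mathbb R$, and the non-degeneracy $F^4\neq0$ becomes exactly $rs\neq0$; the surviving values of $(a,B)$ are precisely those permitted by Theorem~\ref{main-theorem} under $\varepsilon=0,\nu=1$, giving case i).

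\textbf{The SnN case and the converse.} The SnN case proceeds in the same spirit, but using the classification of $8$-dimensional SnN complex structures from \cite{LUV-SnN-2} in place of Theorem~\ref{main-theorem}. For each SnN family I would write the generic Hermitian $F$, impose $dF=0$, and test $F^4\neq0$; the claim to be established is that exactly one family, for one choice of its parameters, admits a closed non-degenerate real $(1,1)$-form, reproducing the equations in ii) and the metric found in \cite{LU-PuresAppl}. Finally, the converse implication is the routine verification that, for the normal forms in i) and ii), the explicit $F$ exhibited in the statement is closed, of type $(1,1)$, real, and satisfies $F^4\neq0$ whenever $rs\neq0$, so that it defines a genuine pseudo-Kähler metric.

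\textbf{Main obstacle.} The difficulty lies in the forward direction's bookkeeping rather than in any single conceptual step. In the WnN case the delicate part is organizing the $dF=0$ linear system uniformly across the discrete choices of $(\varepsilon,\delta,\nu)$ and showing cleanly that non-degeneracy is incompatible with $\varepsilon=1$ or $\nu=0$; in the SnN case the labor is in scanning the entire list of families from \cite{LUV-SnN-2} and ruling out all but one. Both amount to finite, if lengthy, linear algebra over the coefficients of $F$, with the condition $F^4\neq0$ serving throughout as the selection principle that discards the extraneous parameter values.
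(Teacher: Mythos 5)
Your proposal follows essentially the same route as the paper's proof: reduction to invariant metrics via symmetrization and Nomizu's theorem (citing \cite[Proposition 2.1]{LU-PuresAppl}), then writing a generic real $(1,1)$-form on the normal forms of Theorem~\ref{main-theorem} (for WnN) and on the known families of $8$-dimensional SnN structures, imposing $\partial F=0$ and using $F^4\neq0$ as the selection principle that forces $\varepsilon=0$, $\nu=1$ in the WnN case and eliminates all but one SnN family with $a=b=\nu=\mu=0$, $\varepsilon=1$. The only cosmetic difference is that the paper takes the SnN parametrization (Families I and II) from \cite[Theorem 3.3]{LUV-SnN} rather than \cite{LUV-SnN-2}, which does not affect the argument.
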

					
\begin{proof}
First, we recall that the existence of a pseudo-K\"ahler structure on $(M,J)$ 
is equivalent to the existence of an invariant one (see \cite[Proposition 2.1]{LU-PuresAppl}). This is due to 
Nomizu's theorem for the de Rham cohomology of nilmanifolds together with the well-known symmetrization process, which implies that any closed $k$-form
$\alpha$ on a nilmanifold is cohomologous to the invariant $k$-form $\widetilde\alpha$
obtained by symmetrization on $M$. More precisely, since $J$ is invariant, 
if there exists a pseudo-K\"ahler structure~$F$ on $(M,J)$, then $[\widetilde F]=[F]$  in the second de Rham cohomology group $H_{\rm dR}^2(M;\mathbb{R})$ and $[{\widetilde F}^3]=[F]^3 \not=0$. Hence, ${\widetilde F}$ is an invariant non-degenerate real (1,1)-form which is closed, i.e. an invariant pseudo-K\"ahler structure on $(M,J)$.
						
Let $\frg$ be the NLA underlying $M$. 
Let $F$ be any real $(1,1)$-form on $(\mathfrak g, J)$. 
If $\{\omega^k\}_{k=1}^4$ is a (1,0)-basis for $(\frg,J)$, 
then any pseudo-K\"ahler structure $F$ can be written as  
\begin{equation}\label{formaFund}
	F=\sum_{k=1}^4 i\,x_{k\bar k}\,\omega^{k\bar k}
	\ +\sum_{1\leq k<l\leq 4}\big( x_{k\bar l}\,\omega^{k\bar l}-\bar x_{k\bar l}\,\omega^{l\bar k} \big),
\end{equation}
for some coefficients $x_{k\bar k}\in\mathbb R$ and $x_{k\bar l}\in\mathbb C$. 
Note that $F$ is closed if and only if $\partial F=0$. 
We can now make use of the structure equations of non-nilpotent complex structures in dimension eight to compute $\partial F$. 
						
\smallskip
						
We start with weakly non-nilpotent complex structures, which are parametrized by~\eqref{structure-eq-WnN} in Theorem~\ref{main-theorem} 
with the tuple $(\varepsilon,\delta,\nu,a,B)$ taking the possible values specified in that theorem. 
From the condition $\partial F=0$ it is straightforward to see that $x_{2\bar3}=x_{2\bar4}=x_{3\bar4}=0$, together with 
the equalities
$$
\nu\,x_{4\bar4}=0, \qquad
a\,x_{4\bar4}=0, \qquad
B\,x_{4\bar4}=0.
$$
Hence, two cases can be distinguished depending on the vanishing of the triple $(\nu,a,B)$:
					
\smallskip
						
\noindent $\bullet$ If $(\nu,a,B)=(0,0,0)$, then $(\frg,J)$ is the product of a $6$-di\-men\-sion\-al NLA
endowed with a SnN complex structure and a complex torus. We have
$$
\partial F = -i\delta\,x_{1\bar3}\,\omega^{12\bar1}+(x_{1\bar2}-\bar x_{1\bar2}-\varepsilon\,x_{3\bar3})\,\omega^{13\bar1}
+(i\,x_{2\bar2} +\delta x_{3\bar3})\,\omega^{13\bar2} 
+(i\,x_{2\bar2}- \delta x_{3\bar3})\,\omega^{23\bar1}.
$$
If the form $F$ is closed then $x_{1\bar3}=0$ and $x_{2\bar2}=x_{3\bar3}=0$ (recall that $\delta=\pm 1$ and $x_{k\bar{k}}\in\mathbb R$), but this implies $F^4=0$. 
Thus, there are no pseudo-K\"ahler structures in this case.
						
\smallskip
						
\noindent $\bullet$ If $(\nu,a,B)\neq (0,0,0)$, then $x_{4\bar4}=0$ and one gets
$$
\begin{array}{lcl}
\partial F  \!&\!\!=\!\!&\!  -(a\,\bar x_{1\bar4} +i\delta\,x_{1\bar3})\,\omega^{12\bar1}
	+\big(x_{1\bar2}-\bar x_{1\bar2}-\varepsilon\,(x_{3\bar3} - 2\,\nu \delta\,x_{1\bar4})\big)\,\omega^{13\bar1}\\[5pt]
	&& +(i\,x_{2\bar2}+\nu\,x_{1\bar4} + \delta x_{3\bar3})\,\omega^{13\bar2} 
	+(i\,x_{2\bar2}-\nu\,\bar x_{1\bar4} - \delta\, x_{3\bar3})\,\omega^{23\bar1},\\[6pt]
F^4  \!&\!\!=\!\!&\!  -24\,x_{2\bar2}\,x_{3\bar3}\,|x_{1\bar4}|^2\,\omega^{1234\bar1\bar2\bar3\bar4}.
\end{array}
$$
The closedness of $F$ implies $x_{1\bar3}= i\,a \delta\,\bar x_{1\bar4}$, 
$x_{2\bar 2}=-\nu\,\Imag\!(x_{1\bar4})$ and $x_{3\bar3}= -\nu\,\delta\,\Real\!(x_{1\bar4})$, because $x_{2\bar2},x_{3\bar3}\in\mathbb R$.
Since $\nu=0$ gives $x_{2\bar2}=x_{3\bar3}=0$, and thus $F^4=0$, 
we next assume $\nu=1$ and choose $\Real\!(x_{1\bar4}), \Imag\!(x_{1\bar 4})\neq 0$ in order to 
have $x_{2\bar2},x_{3\bar3}\neq 0$.
Furthermore, to ensure the condition $\partial F=0$ one also needs
$$\begin{array}{lcl}
0 \!&\!\!=\!\!&\!  2\,i\,\Imag\!(x_{1\bar2})-\varepsilon\,(x_{3\bar3} - 2 \delta\,x_{1\bar4})\\[4pt]
\!&\!\!=\!\!&\!  2\,i\,\big(\Imag\!(x_{1\bar2}) + \varepsilon\,\delta\,\Imag\!(x_{1\bar4})\big)
+ 3\,\varepsilon\,\delta\,\Real\!(x_{1\bar4}).
\end{array}$$
Since we have chosen $\Real\!(x_{1\bar4})\neq 0$, 
the only possible pseudo-K\"ahler structures arise when $\varepsilon=0$. 
With this choice, it suffices to take $\Imag\!(x_{1\bar2})=0$ to have $\partial F=0$. This proves part~\textrm{i)} of the statement of the theorem. We simply observe that the values of $a$ and $B$ come directly from Theorem~\ref{main-theorem}, due to the conditions $\varepsilon=0$ and $\nu=1$. 
						
\medskip
						
It remains to study those complex structures of SnN type, which are classified into two families 
in \cite[Theorem 3.3]{LUV-SnN-2}.
The first family is given by
$$\text{Family I:} \quad
\begin{cases}
	d\omega^1 = 0,\\
	d\omega^2 = \varepsilon\,\omega^{1\bar 1},\\
	d\omega^3 = \omega^{14}+\omega^{1\bar 4}+a\,\omega^{2\bar 1}
			+ i\,\delta\,\varepsilon\,b\,\omega^{1\bar 2},\\
	d\omega^4 = i\,\nu\,\omega^{1\bar 1} +b\,\omega^{2\bar 2}+ i\,\delta\,(\omega^{1\bar 3}-\omega^{3\bar 1}),
\end{cases}
$$
where $\delta=\pm 1$, $\varepsilon,\nu\in\{0,1\}$, and $(a,b)\in \mathbb R^2-\{(0,0)\}$ with $a\geq 0$.
						
A direct calculation
shows that $\partial F=0$ implies $x_{1\bar4}=x_{2\bar3}=x_{2\bar4}=x_{3\bar4}=0$. 
With this choice, we obtain:
$$
\begin{array}{lcl}
\partial F \!&\!\!=\!\!&\!   -i\,\varepsilon\,(x_{2\bar2} + b\, \delta\,x_{1\bar3})\,\omega^{12\bar1} -i\,a\,x_{3\bar3}\,\omega^{13\bar2}
	+(x_{1\bar3}-\bar x_{1\bar3} - \nu\,x_{4\bar4})\,\omega^{14\bar1} \\[4pt]
	&& +(i\,x_{3\bar3} + \delta\, x_{4\bar4})\,\omega^{14\bar3}  -\varepsilon\,b\,\delta\,x_{3\bar3}\,\omega^{23\bar1} - i\,b\,x_{4\bar4}\,\omega^{24\bar2}  +(i\,x_{3\bar3} -\delta\, x_{4\bar4})\,\omega^{34\bar1},\\[6pt]
F^4 \!&\!\!=\!\!&\!  24\,x_{4\bar4}\,(x_{1\bar1}\,x_{2\bar2}\,x_{3\bar3}-x_{3\bar3}\,|x_{1\bar2}|^2-x_{2\bar2}|x_{1\bar3}|^2)\,
\omega^{1234\bar1\bar2\bar3\bar4}.
\end{array}
$$
Since $x_{3\bar3}$ and $x_{4\bar4}$ are real numbers, one has that $i\,x_{3\bar3} +\delta\, x_{4\bar4}=0$ if and
only if $x_{3\bar3}=x_{4\bar4}=0$. However, this yields $F^4=0$, contradicting the non-degeneracy condition.
Therefore, there are no pseudo-K\"ahler structures for SnN complex structures in Family I. 
						
The SnN complex structures in the second family are parametrized by 
$$\text{Family II:} \quad
	\begin{cases}
	d\omega^1=0,\\
	d\omega^2=\omega^{14}+\omega^{1\bar 4},\\
	d\omega^3=a\,\omega^{1\bar 1}
		+\varepsilon\,(\omega^{12}+\omega^{1\bar 2}-\omega^{2\bar 1})
		+i\,\mu\,(\omega^{24}+\omega^{2\bar 4}),\\
	d\omega^4=i\,\nu\,\omega^{1\bar 1}-\mu\,\omega^{2\bar 2}+i\,b\,(\omega^{1\bar 2}-\omega^{2\bar 1})+i\,(\omega^{1\bar 3}-\omega^{3\bar 1}),
	\end{cases}$$
where $a, b\in\mathbb R$, and $\varepsilon,\mu,\nu\in\{ 0,1\}$ with $(\varepsilon,\mu)\neq (0,0)$ and $\mu\nu=0$.
						
Calculating the condition
$\partial F=0$ from the previous structure equations, we directly get $x_{2\bar4}=x_{3\bar4}=0$, together with the equalities $\varepsilon\,x_{3\bar3}=0$ and $\mu\,x_{3\bar3}=0$. Since $(\varepsilon,\mu)\neq (0,0)$ we have $x_{3\bar3}=0$. Now, the conditions $x_{2\bar4}=x_{3\bar3}=x_{3\bar4}=0$ imply $x_{1\bar4}=0$
and $x_{2\bar3}=-x_{4\bar4}$. 
In this way, the $(2,1)$-form $\partial F$ reduces to
$$
\begin{array}{lcl}
\partial F \!&\!\!=\!\!&\!  \big(a\,x_{4\bar4}+\varepsilon\,(x_{1\bar3}-\bar x_{1\bar3})\big)\,\omega^{12\bar1} 
	-\big(\nu\,x_{4\bar4} - (x_{1\bar2}-\bar x_{1\bar2})\big)\,\omega^{14\bar1}\\[4pt]
	&& +i\,(x_{2\bar2}-i\,b\,x_{4\bar4}-\mu\,x_{1\bar3})\,\omega^{14\bar2}
	+i\,(x_{2\bar2}+i\,b\,x_{4\bar4}-\mu\,\bar x_{1\bar3})\,\omega^{24\bar1} 
	+3\,i\,\mu\,x_{4\bar4}\,\omega^{24\bar2},\\[6pt]
F^4 \!&\!\!=\!\!&\!  -24\,x_{4\bar4}\,\Big(x_{1\bar1}\,x_{4\bar4}^2+x_{2\bar2}\,|x_{1\bar3}|^2
	-i\,x_{4\bar4}\,(x_{1\bar2}\bar x_{1\bar3}-\bar x_{1\bar2}\,x_{1\bar3})\Big)\,
	\omega^{1234\bar1\bar2\bar3\bar4}.
\end{array}
$$
It is clear that if $\mu=1$, then $x_{4\bar4}=0$ and the non-degeneracy condition fails. Hence, $\mu=0$, which in addition implies $\varepsilon=1$. 
Observe that now $\partial F=0$ holds if and only if 
$$
a\,x_{4\bar4} + 2\, i\,\Imag\!(x_{1\bar3})  =0, \quad\ 
\nu\,x_{4\bar4} - 2\, i\,\Imag\!(x_{1\bar2}) =0, \quad\ 
x_{2\bar2}+i\,b\,x_{4\bar4}=0.
$$
Since $x_{2\bar2},x_{4\bar4}$ are real numbers, with $x_{4\bar4} \neq0$, we get that 
$a=b=\nu=0$ and $\Imag\!(x_{1\bar2})=\Imag\!(x_{1\bar3})=x_{2\bar2}=0$. In this case, the form $F^4$ is given by 
$$
F^4 = -24\,x_{1\bar1}\,x_{4\bar4}^3\, \omega^{1234\bar1\bar2\bar3\bar4},
$$
so it suffices to impose the extra condition $x_{1\bar1}\neq0$ to get pseudo-K\"ahler structures. This leads to part \textrm{ii)} of the theorem.
						
This finishes the study of pseudo-K\"ahler geometry on those $8$-dimensional NLAs endowed with non-nilpotent complex structures, thus attaining the desired result.
\end{proof}

In the following, we prove that all the invariant pseudo-K\"ahler metrics obtained in Theorem~\ref{class-pK-nN-dim8} are non-flat (recall that they always are Ricci-flat).
					
\begin{proposition}\label{non-flat-pK}
Let $X=(M,J)$ be a pseudo-K\"ahler nilmanifold with $\dim_{\mathbb C}X=4$ and 
an invariant non-nilpotent $J$. Every invariant pseudo-K\"ahler metric on $X$ is non-flat. 
\end{proposition}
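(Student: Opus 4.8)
The plan is to exploit the explicit classification of Theorem~\ref{class-pK-nN-dim8}: every invariant pseudo-K\"ahler structure on such an $X$ is, up to equivalence, either the weakly non-nilpotent family~\eqref{ecus-pk-WnN} with the fundamental form $F$ displayed there, or the strongly non-nilpotent family of part~ii), again with its explicit $F$. Flatness of the invariant metric $g$ reconstructed from $(F,J)$ via $g(U,V)=F(JU,V)$ is a purely algebraic property of the metric Lie algebra $(\frg,g)$, since the Riemann tensor of a left-invariant metric descends from a left-invariant tensor on the group. Hence it suffices to show that this tensor does not vanish for any admissible choice of the real parameters $r,s,u,v$ (subject to $rs\neq 0$) and of $(\delta,a,B)$.

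First I would fix, in each case, the real $J$-adapted basis $\{e^k,Je^k\}$ dual to the given $(1,0)$-coframe, read off the Lie brackets from the structure equations through $d\alpha(X,Y)=-\alpha([X,Y])$, and record $g$ in this basis (equivalently, work in the complexified basis $\{Z_k,\bar Z_k\}$, where $g(Z_a,\bar Z_b)$ is the Hermitian matrix read off from $F$ and all other pairings vanish). Because $g$ is left-invariant, Koszul's formula collapses to the algebraic identity
\begin{equation*}
2\,g(\nabla_X Y, Z)=g([X,Y],Z)-g([Y,Z],X)+g([Z,X],Y),
\end{equation*}
so the Levi-Civita connection $\nabla$ is obtained directly from the structure constants and the entries of $g$; the pseudo-K\"ahler hypothesis guarantees $\nabla J=0$, which I would use as a consistency check. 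I would then evaluate the curvature operator $R(X,Y)Z=\nabla_X\nabla_Y Z-\nabla_Y\nabla_X Z-\nabla_{[X,Y]}Z$ on a carefully chosen pair of basis vectors.

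The decisive step is to isolate a \emph{single} curvature component whose non-vanishing is forced by the constraints, rather than to compute the entire tensor. Since the metrics are already Ricci-flat, no trace or averaging argument helps and one genuinely needs a non-trivial component; the natural candidates are those built from the pairings guaranteed to be nonzero, namely the entries controlled by $r$ and $s$, for which $rs\neq 0$. Concretely, I expect a component of $R$ which, after inserting the connection coefficients, reduces to a nonzero numerical multiple of a monomial such as $rs$ (or $r^2$, $s^2$) times a factor that cannot vanish within the admissible ranges of $(a,B,\delta)$; this immediately gives $R\neq 0$.

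The main obstacle is parametric rather than conceptual: one must carry the free parameters $(\delta,a,B,r,s,u,v)$ through the connection and curvature formulas and check that the selected component stays nonzero for all allowed values, including the complex parameter $B$ with $\Imag B\geq 0$ and both signs of $\delta$. Choosing the curvature component so that the unconstrained parameters $u$ and $v$ drop out, and so that the surviving coefficient is a fixed nonzero constant times $rs$, is precisely what makes the verification uniform across each entire family and completes the proof.
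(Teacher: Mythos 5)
Your proposal takes essentially the same route as the paper's proof: reduce to the invariant metrics classified in Theorem~\ref{class-pK-nN-dim8}, obtain the Levi-Civita connection from the collapsed Koszul formula (using $\nabla J=0$ to restrict to $(1,0)$-output), and exhibit a single curvature component forced to be nonzero by $rs\neq 0$. The paper carries this out by citing \cite[Proposition 3.4]{LU-PuresAppl} for the SnN case and, in the WnN case, computing $R(Z_1,\bar Z_1,Z_2,\bar Z_2)=-\delta\,r\neq 0$, where $u$ and $v$ indeed drop out because the term involving $v$ pairs against $g(Z_4,\bar Z_2)=0$; this is exactly the kind of component your plan predicts, so the only thing missing from your write-up is performing that (routine) explicit calculation rather than asserting its outcome.
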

					
\begin{proof}
Recall that the structure equations of each $X=(M,J)$ 
appear in Theorem~\ref{class-pK-nN-dim8}.
If the complex structure~$J$ is SnN, the result can be found in \cite[Proposition 3.4]{LU-PuresAppl}. Thus, we need to prove the statement for WnN complex structures, namely, those given by case~\textrm{i)} in 
Theorem~\ref{class-pK-nN-dim8}.
						
We first note that the pseudo-Riemannian metric $g$ can be recovered from its fundamental form $F$ by $g(Z,W)=F(JZ,W)$, where $Z,W$ are any complex vector fields on the complex nilmanifold $X$. 
Since the pseudo-K\"ahler structures under consideration are invariant, the (complexified) Koszul formula for the Levi-Civita connection $\nabla$
of the metric $g$ reduces to
$$
2 g(\nabla_UV,W)=g([U,V],W)-g([V,W],U)+g([W,U],V),
$$
for invariant complex vector fields $U,V,W$ on $X$.
						
Now, since the structure $(J,g)$ is pseudo-K\"ahler, we have $\nabla J=0$, which is equivalent to the condition $\nabla_U(JV)-J(\nabla_UV)=0$, for any complex vector fields $U,V$ on $X$. 
Therefore, if $V$ has type (1,0) then $J(\nabla_UV) = \nabla_U(JV) =i\, (\nabla_UV)$, that is to say,
$\nabla_UV$ is also a vector field of bidegree (1,0). In particular, if $\{Z_j\}_{j=1}^4$ denotes the basis of complex vector fields of bidegree (1,0) dual to the
basis $\{\omega^j\}_{j=1}^4$, one has that $\nabla_U Z_j$ has type $(1,0)$ for every $1 \leq j \leq 4$.
Moreover, by complex conjugation, it suffices to compute
$\nabla_{Z_k}Z_j$ and $\nabla_{\overline{Z}_k}Z_j$ for
$1 \leq j,k \leq 4$.

To prove the result we need the following derivations, which can be obtained by direct calculation from Theorem~\ref{class-pK-nN-dim8}~{\rm i)}:
\begin{equation}\label{nabla-1}
\nabla_{\bar{Z}_1}Z_2 = -i\delta\, Z_3, \quad\quad 
\nabla_{Z_1}Z_3 = -\frac{ir}{s}\, Z_2 +\frac{irv}{s(r-is)}\, Z_4, 
\end{equation}
and
\begin{equation}\label{nabla-2}
	\nabla_{Z_1}Z_2 = 0,\quad\quad 
	\nabla_{Z_4}Z_2 = 0,\quad\quad
	\nabla_{\bar{Z}_4}Z_2 = 0.
\end{equation}
						
Let $R$ be the curvature tensor of the pseudo-K\"ahler metric, i.e. 
$$
R(U,V,W,T)=g\big(\nabla_U\nabla_VW-\nabla_V\nabla_UW-\nabla_{[U,V]}W, \, T\big),
$$
for $U,V,W,T$ complex vector fields on $X$. 
Taking into account the observations above, by complex conjugation and using the symmetries of the curvature tensor, one has that 
the metric $g$ is non-flat if and only if
$R(Z_i,\bar{Z}_j,Z_k,\bar{Z}_l)\not=0$ for some $i,j,k,l$.
We next show that $R(Z_1,\bar{Z}_1,Z_2,\bar{Z}_2)$ is not zero.
						
From the complex equations~\eqref{ecus-pk-WnN}, we get $[Z_1,\bar{Z}_1]=-B\,Z_4+\bar{B}\,\bar{Z}_4$.  
Therefore, from \eqref{nabla-1}-\eqref{nabla-2} it follows that
$$
\begin{array}{rl}
R(Z_1,\bar{Z}_1,Z_2,\bar{Z}_2)
	\!\!&\!\!= \, g(\nabla_{Z_1}\nabla_{\bar{Z}_1}Z_2 - \nabla_{\bar{Z}_1}\nabla_{Z_1}Z_2 - \nabla_{[Z_1,\bar{Z}_1]}Z_2, \, \bar{Z}_2) \\[5pt]
	&= g(\nabla_{Z_1}\nabla_{\bar{Z}_1}Z_2 
	- \nabla_{\bar{Z}_1}\nabla_{Z_1}Z_2 
	+ B\,\nabla_{Z_4}Z_2 - \bar{B}\,\nabla_{\bar{Z}_4}Z_2, 
	\, \bar{Z}_2) \\[5pt]
	&=\, -i \delta\, g\!\left(\nabla_{Z_1} Z_3, \, \bar{Z}_2\right) \\[5pt]
	& =-i \delta\big(\!-\frac{ir}{s}\, g(Z_2,\bar{Z}_2) +\frac{irv}{s(r-is)}\, g(Z_4,\bar{Z}_2)  \big)\\[5pt]
	&= - \delta\, r \not= 0,
\end{array}
$$
where the last equality comes from the fact that $g(Z_2,\bar{Z}_2)= i\, F(Z_2,\bar{Z}_2)=s$ and $g(Z_4,\bar{Z}_2)= i\, F(Z_4,\bar{Z}_2)=0$, according to the description of $F$ in Theorem~\ref{class-pK-nN-dim8}~\textrm{i)}.
\end{proof}

In the following result we study the existence of neutral Calabi-Yau metrics 
on 8-dimensional nilmanifolds with non-nilpotent complex structures. Recall that a neutral Calabi-Yau structure is, in particular, pseudo-K\"ahler.
					
\begin{proposition}\label{neutralCY-nil}
All the complex nilmanifolds $X=(M,J)$ given in Theorem~\ref{class-pK-nN-dim8} admit neutral Calabi-Yau structures.
\end{proposition}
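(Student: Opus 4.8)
The plan is to exhibit, for every complex nilmanifold $X=(M,J)$ in Theorem~\ref{class-pK-nN-dim8}, a pseudo-K\"ahler metric from that theorem which is simultaneously \emph{neutral} (signature $(4,4)$) and equipped with a parallel nowhere-vanishing $(4,0)$-form; by the definitions recalled above this is precisely a neutral Calabi-Yau structure. First I would dispose of the Calabi-Yau datum, which is essentially free here. On each $X$ the invariant form $\Phi=\omega^{1234}$ is a nowhere-vanishing section of the canonical bundle and it is closed: from the structure equations~\eqref{ecus-pk-WnN} and from Theorem~\ref{class-pK-nN-dim8}~ii) every $d\omega^k$ lies in $\bigwedge^{2,0}_J\frg^*\oplus\bigwedge^{1,1}_J\frg^*$, and a one-line wedge computation gives $d\Phi=0$ (equivalently, this is Salamon's result used before the statement of Theorem~\ref{class-pK-nN-dim8}). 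As recalled there, any invariant pseudo-K\"ahler metric on a complex nilmanifold satisfies $\nabla\Phi=0$ for this (up to scale unique) invariant $(4,0)$-form. Hence $\nabla\Phi=0$ holds automatically for \emph{every} metric listed in the theorem, and the entire problem collapses to producing, within each family, a representative of neutral signature.

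The core of the argument is therefore a signature analysis. I would pass from $F$ to the Hermitian form $H=(h_{k\bar l})$ on $\frg_J^{1,0}$ determined by $F$ via $h_{k\bar k}=x_{k\bar k}$ and $h_{k\bar l}=-i\,x_{k\bar l}$ for $k<l$, using that the pseudo-Riemannian metric $g$ has signature $(2p,2q)$ exactly when $H$ has inertia $(p,q)$, so neutrality means inertia $(2,2)$. For the WnN family~i), reading off $H$ from the displayed $F$ shows it is an \emph{arrow} matrix: the only off-diagonal entries sit in the first row and column. Since $h_{4\bar4}=0$ while $h_{1\bar4}=s-ir\neq 0$ (because $rs\neq 0$), the $\{1,4\}$ block is a hyperbolic plane of inertia $(1,1)$; moreover $h_{2\bar4}=h_{3\bar4}=0$, so a congruence $e_j\mapsto e_j+c_je_4$ for $j=2,3$ kills the remaining spokes $h_{1\bar2},h_{1\bar3}$ without disturbing the $\{2,3\}$ block, which stays $\mathrm{diag}(-s,-\delta r)$. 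Thus $H$ is congruent to a hyperbolic plane $\oplus\,\mathrm{diag}(-s,-\delta r)$, and its inertia is $(2,2)$ exactly when $-s$ and $-\delta r$ have opposite signs, i.e. $\delta rs<0$ (consistent with $\det H=-\delta rs(r^2+s^2)$), a condition independent of $u,v,a,B$.

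For the SnN family~ii) the entry $h_{4\bar4}=s$ decouples, and the remaining $\{1,2,3\}$ block $H'$ contains the hyperbolic $\{2,3\}$ plane $\bigl(\begin{smallmatrix}0 & is\\ -is & 0\end{smallmatrix}\bigr)$ of inertia $(1,1)$; by interlacing $H'$ has inertia $(2,1)$ or $(1,2)$ according to the sign of $\det H'=-r s^2$, and adjoining the $\{4\}$-entry $s$ produces total inertia $(2,2)$ exactly when $rs<0$. In both families the neutral signature is then reached by the sign choice alone (for instance $r=1,\ s=-\delta$ in~i) and $r=1,\ s=-1$ in~ii)), all remaining parameters being free, so in particular each algebra $\frg$ of Theorem~\ref{class-pK-nN-dim8} does admit a neutral representative; combined with $\nabla\Phi=0$ this gives the neutral Calabi-Yau structure. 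I expect the only real work to be this inertia bookkeeping, and the one mild subtlety is that $H$ has vanishing diagonal entries ($h_{4\bar4}=0$ in~i), $h_{2\bar2}=h_{3\bar3}=0$ in~ii)), which blocks naive diagonalization and forces the hyperbolic-plane reductions above; pleasantly, those same reductions show the signature does not depend on the free parameters, which is what makes the existence uniform across all eleven algebras.
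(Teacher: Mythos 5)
Your proof is correct, and its skeleton (exhibit a neutral-signature member of each pseudo-K\"ahler family from Theorem~\ref{class-pK-nN-dim8}, then note that $\Phi=\omega^{1234}$ is closed, hence parallel, to get the Calabi--Yau datum) coincides with the paper's; the genuine difference lies in how the signature is established, and your route is cleaner and yields more. The paper passes to the real basis $e^{2k-1}+i\,e^{2k}=\omega^k$, writes the full $8\times 8$ Gram matrix of $g$, and verifies neutrality only for special parameter choices: for $a=0$ it sets $u=v=0$ and computes the eigenvalues explicitly, for $a=1$ it sets $u=1$, $v=0$ and counts sign changes of the leading principal minors, and the strongly non-nilpotent case ii) is not re-proved but quoted from \cite[Proposition 3.4]{LU-PuresAppl}. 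You instead work with the $4\times 4$ Hermitian form $H$ on $\frg^{1,0}_J$ (using that the signature of $g$ is twice the inertia of $H$), and exploit the vanishing diagonal entries to split off hyperbolic planes: in case i) the congruence $Z_j\mapsto Z_j+c_jZ_4$ ($j=2,3$) is legitimate precisely because $h_{2\bar4}=h_{3\bar4}=h_{4\bar4}=0$ and $h_{1\bar4}\neq0$, reducing $H$ to a hyperbolic plane plus $\mathrm{diag}(-s,-\delta r)$; in case ii) Cauchy interlacing together with $\det H'=-rs^2$ pins down the inertia. This buys you two things the paper does not state: an exact neutrality criterion ($\delta rs<0$ in case i), $rs<0$ in case ii)) valid uniformly in $u,v,a,B$, and a self-contained treatment of the SnN family. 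Two harmless caveats: your dictionary $h_{k\bar k}=x_{k\bar k}$, $h_{k\bar l}=-i\,x_{k\bar l}$ corresponds to the convention $g(U,V)=F(U,JV)$, the negative of the paper's $g(U,V)=F(JU,V)$, which flips inertia $(p,q)\mapsto(q,p)$ and is immaterial for neutrality; and integrability of $J$ alone does not force $d\Phi=0$ (a $(4,1)$-component could a priori survive), but your explicit wedge check, or the appeal to Salamon's theorem recalled before Theorem~\ref{class-pK-nN-dim8}, settles this exactly as in the paper.
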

					
\begin{proof}
Let us show that neutral Calabi-Yau structures exist for any non-nilpotent complex structure in Theorem~\ref{class-pK-nN-dim8}. Since the result for those complex structures in part~{\rm ii)} of the theorem was proved in \cite[Proposition 3.4]{LU-PuresAppl}, it suffices to study the case~{\rm i)}. 
In particular, we need to prove that every weakly non-nilpotent complex structure given by~\eqref{ecus-pk-WnN} has a pseudo-K\"ahler metric of neutral signature $(4,4)$. 
						
Let us consider the basis of real $1$-forms 
$\{e^1,\ldots,e^8\}$ defined by $e^{2k-1}+i\, e^{2k}=\omega^k$, for $1 \leq k \leq 4$, where $\{\omega^1,\ldots,\omega^4\}$ is the basis of $(1,0)$-forms in Theorem~\ref{class-pK-nN-dim8} 
satisfying the complex structure equations~\eqref{ecus-pk-WnN}.
In terms of this real basis $\{e^1,\ldots,e^8\}$, the corresponding complex structure $J$ and pseudo-K\"ahler metrics $F$ are expressed as
$$
\begin{array}{rl}
	& J e^1 = -e^2, \quad J e^3 = -e^4, \quad J e^5 = -e^6, \quad J e^7 = -e^8, \\[8pt]
	& F = \, 2u\, e^{12} + 2v\, e^{13} + 2a\,\delta\,s\, e^{15} +2a\,\delta\,r\, e^{16} + 2r\, e^{17} + 2s\, e^{18} + 2v\, e^{24} - 2a\,\delta\,r\, e^{25} \\[4pt]
	& \phantom{F = \, } + 2a\,\delta\,s\, e^{26} - 2s\, e^{27} + 2r\, e^{28} - 2s\, e^{34} - 2\delta\,r\, e^{56},
\end{array}
$$
where $r,s,u,v\in\mathbb R$ with $rs\neq 0$. By direct calculation, the pseudo-Riemannian metric $g(x,y)=F(Jx,y)$ is 
given in terms of the real dual basis by the following matrix:
\begin{equation}\label{neutral-g}
	(g_{ij})_{i,j}= \left(
		\begin{array}{cccccccc}
		2u & 0  & 0  & -2v & 2a\,\delta\,r  & -2a\,\delta\,s & 2s & -2r \\[2pt]
		0  & 2u & 2v & 0  & 2a\,\delta\,s & 2a\,\delta\,r  & 2r & 2s \\[2pt]
		0  & 2v & -2s  & 0  & 0  & 0  & 0 & 0 \\[2pt]
		-2v & 0 & 0  & -2s  & 0 & 0  & 0 & 0 \\[2pt]
		2a\,\delta\,r  & 2a\,\delta\,s & 0 & 0  & -2\delta\,r\  & 0   & 0 & 0 \\[2pt]
		-2a\,\delta\,s & 2a\,\delta\,r & 0 & 0 & 0  & -2\delta\,r   & 0 & 0 \\[2pt]
		2s & 2r & 0 & 0 & 0 & 0 & 0 & 0 \\[2pt]
		-2r & 2s & 0 & 0 & 0 & 0 & 0 & 0
	\end{array}
\right).
\end{equation}
It is easy to see that there are metrics in the family~\eqref{neutral-g} with neutral signature for any value of the triple $(\delta,a,B)$ in the complex equations~\eqref{ecus-pk-WnN}. For instance:
							
\noindent 
$\bullet$ If $a=0$ and $B\in\{0,1\}$, take $u=v=0$. Then, the eigenvalues of the matrix~\eqref{neutral-g} are 
$$-2\,s,\quad -2\sqrt{r^2+s^2},\quad 2\sqrt{r^2+s^2},\quad -2\,\delta\,r,$$
all with algebraic multiplicity two. It suffices to choose $\delta\,r\,s<0$ to find pseudo-K\"ahler metrics with signature $(4,4)$.
							
\noindent 
$\bullet$ If $a=1$ and either $B\in\mathbb R^{\geq 0}$ or $B\in\mathbb C$ with $\Imag B>0$,
consider $u=1$, $v=0$. The leading principal minors of~\eqref{neutral-g}, written in increasing order, are
$$2,\quad 4,\quad -8\,s,\quad 16\,s^2,\quad -32\,s^2\,(\delta\,r+r^2+s^2),\quad 
64s^2\big((r^2 + s^2)\delta + r\big)^2,$$
$$-128\,\delta\,r\,s^2\,(r^2+s^2)\,(\delta\,r+r^2+s^2),\quad 256\,r^2\,s^2\,(r^2+s^2)^2.$$
If we take $s<0$ and $\delta\,r>0$, one can check there are exactly four changes of sign in the previous sequence of minors. Therefore, the matrix~\eqref{neutral-g} has exactly four negative eigenvalues and four positive ones, thus the signature of this metric is $(4,4)$.
						
\smallskip

Finally, since the $(4,0)$-form $\Phi=\omega^{1234}$ is parallel,
we conclude that $X$ has neutral Calabi-Yau structures.
\end{proof}

An interesting consequence of the previous results is that 
the families of pseudo-K\"ahler metrics obtained in Theorem~\ref{class-pK-nN-dim8} 
provide new neutral Calabi-Yau metrics in eight dimensions that are not flat (although they all are 
Ricci-flat).
					
\begin{theorem}\label{new-neutral-CY}
Let $X=(M,J)$ be a pseudo-K\"ahler nilmanifold with $\dim_{\mathbb C}X=4$ and an invariant non-nilpotent $J$. Then, $X$ has neutral Calabi-Yau metrics that are Ricci-flat but not flat. 
\end{theorem}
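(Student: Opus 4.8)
The plan is to deduce the statement as a direct corollary of the three preceding results, since its three assertions---existence of neutral Calabi-Yau metrics, Ricci-flatness, and non-flatness---have each been secured separately. First I would apply Theorem~\ref{class-pK-nN-dim8} to reduce to a finite list of normal forms: any pseudo-K\"ahler nilmanifold $X=(M,J)$ with $\dim_{\mathbb C}X=4$ and non-nilpotent $J$ has $(\frg,J)$ described either by the WnN equations of part~i) or by the SnN equations of part~ii). This turns the problem into a check over explicitly parametrized structures, to which the two relevant propositions directly apply.

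Next I would invoke Proposition~\ref{neutralCY-nil}, which guarantees that every such $X$ carries a neutral Calabi-Yau structure, that is, an invariant pseudo-K\"ahler metric $g$ of signature $(4,4)$ together with the parallel $(4,0)$-form $\Phi=\omega^{1234}$. Ricci-flatness is then automatic: it is part of the definition of a neutral Calabi-Yau structure, and it also follows from the general fact recalled earlier that any invariant pseudo-K\"ahler metric on a complex nilmanifold is Ricci-flat, since the closed invariant $(n,0)$-form supplied by \cite{Salamon} is parallel.

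For non-flatness I would observe that the neutral Calabi-Yau metrics produced by Proposition~\ref{neutralCY-nil} are, in particular, invariant pseudo-K\"ahler metrics on $X$. Proposition~\ref{non-flat-pK} states that every invariant pseudo-K\"ahler metric on a non-nilpotent $X$ with $\dim_{\mathbb C}X=4$ is non-flat, so the metrics at hand are non-flat. Combining these observations over both the WnN and SnN cases yields the theorem.

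The main difficulty does not lie in this final assembly, which is essentially formal, but in Proposition~\ref{non-flat-pK}, whose proof must exhibit a nonzero curvature component in the presence of Ricci-flatness. The key is the identity $R(Z_1,\bar{Z}_1,Z_2,\bar{Z}_2)=-\delta\,r\neq 0$, computed from the Levi-Civita derivatives $\nabla_{\bar{Z}_1}Z_2=-i\delta\,Z_3$ and $\nabla_{Z_1}Z_3$ together with the bracket $[Z_1,\bar{Z}_1]=-B\,Z_4+\bar{B}\,\bar{Z}_4$. Since that computation has already been carried out for both families, the present theorem follows with no additional work.
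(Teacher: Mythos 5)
Your proposal is correct and follows exactly the paper's own argument: the theorem is stated there as a direct consequence of Propositions~\ref{non-flat-pK} and~\ref{neutralCY-nil}, with Ricci-flatness built into the neutral Calabi-Yau definition and non-flatness supplied by the curvature computation $R(Z_1,\bar{Z}_1,Z_2,\bar{Z}_2)=-\delta\,r\neq 0$ that you correctly identify as the substantive step. Your extra remarks (the implicit use of Theorem~\ref{class-pK-nN-dim8} to reduce to the normal forms, and that the Calabi-Yau metrics are in particular invariant pseudo-K\"ahler) simply make explicit what the paper leaves tacit.
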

					
\begin{proof}
It is a direct consequence of Propositions~\ref{non-flat-pK} and~\ref{neutralCY-nil}. 
\end{proof}

Several constructions of hypersymplectic structures on nilpotent Lie algebras can be found in recent literature. Examples of 2-step nilpotent Lie algebras (of Kodaira type) are given in~\cite{FPPS} and examples in the 3-step nilpotent case are obtained in~\cite{AD}. The first 4-step nilpotent Lie algebras with hypersymplectic structures are found in~\cite{BGL}. Other examples have been constructed  in~\cite{ContiGil} using semidirect products of Lie algebras. All these hypersymplectic nilpotent Lie algebras are, in particular, neutral Calabi-Yau (hence, also pseudo-K\"ahler) and it is worthy to remark that their underlying complex structures $J$ are always of nilpotent type. 
					
We next prove that the neutral Calabi-Yau structures constructed in Theorem~\ref{new-neutral-CY} do not come from hypersymplectic structures. Moreover, in the following result we show that non-nilpotent complex structures in real dimension 8 do not admit any complex symplectic form, thus extending the non-existence result given in \cite[Proposition 5.8]{BFLM} for SnN complex structures. 
					
\begin{proposition}\label{non-existence-hol-symplectic}
Let $X=(M,J)$ be a complex nilmanifold with $\dim_{\mathbb C}X=4$ and a non-nilpotent complex structure $J$. Then, $X$ does not admit any complex symplectic structure. In particular, the non-flat neutral Calabi-Yau metrics constructed in Theorem~\ref{new-neutral-CY} do not arise from any hypersymplectic structure.
\end{proposition}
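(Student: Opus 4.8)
The plan is to reduce the statement to the non-existence of an \emph{invariant} complex symplectic form on the underlying nilpotent Lie algebra $(\frg,J)$, and then rule this out by a direct computation with the structure equations. Recall that here $\dim_{\mathbb C}X=4=2m$ with $m=2$, so a complex symplectic structure is a $d$-closed $(2,0)$-form $\Omega$ with $\Omega\wedge\Omega\neq 0$, i.e. a nowhere-vanishing $(4,0)$-form.

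For the reduction, suppose $\Omega$ is a complex symplectic form on $X$. Since $\Omega$ is closed, so is $\overline{\Omega\wedge\Omega}$; because $\Omega\wedge\Omega$ is a nowhere-vanishing $(4,0)$-form, the $(4,4)$-form $\Omega\wedge\Omega\wedge\overline{\Omega\wedge\Omega}$ is a nonzero multiple of a volume form, so $\int_M \Omega\wedge\Omega\wedge\overline{\Omega\wedge\Omega}\neq 0$. Stokes' theorem then forces $[\Omega\wedge\Omega]\neq 0$ in $H^4_{\mathrm{dR}}(M;\mathbb C)$, for otherwise $\Omega\wedge\Omega=d\beta$ and $\int_M d\big(\beta\wedge\overline{\Omega\wedge\Omega}\big)=0$ would give a contradiction. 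Symmetrizing $\Omega$ (as in the proof of \cite[Proposition 2.1]{LU-PuresAppl}, which preserves both closedness and the $(2,0)$-type since $J$ is invariant) yields an invariant $d$-closed $(2,0)$-form $\widetilde\Omega$ with $[\widetilde\Omega]=[\Omega]$, hence $[\widetilde\Omega\wedge\widetilde\Omega]=[\Omega\wedge\Omega]\neq 0$; as $\widetilde\Omega\wedge\widetilde\Omega$ is an invariant $(4,0)$-form, this means $\widetilde\Omega\wedge\widetilde\Omega\neq 0$. Thus it suffices to prove that $(\frg,J)$ admits no invariant complex symplectic form.

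For the SnN case this is \cite[Proposition 5.8]{BFLM}. For the WnN case I would work at the Lie algebra level with the normal form \eqref{structure-eq-WnN} of Theorem~\ref{main-theorem}. Writing a general $(2,0)$-form as $\Omega=\sum_{1\le i<j\le 4}\Omega_{ij}\,\omega^{ij}$, I would compute $d\Omega$ and impose $d\Omega=0$. Reading off the coefficients, the $(3,0)$-part in $\omega^{134}$ gives $\Omega_{24}=0$, while the $(2,1)$-terms $\omega^{14\bar2}$, $\omega^{12\bar2}$ and $\omega^{12\bar1}$ give $-i\,\delta\,\Omega_{34}=0$, $i\,\delta\,\Omega_{23}=0$ and $i\,\delta\,\Omega_{13}+i\,\varepsilon\,\Omega_{23}+B\,\Omega_{24}=0$; since $\delta=\pm1$ these force $\Omega_{13}=\Omega_{23}=\Omega_{24}=\Omega_{34}=0$. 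Finally, since $\Omega\wedge\Omega=2\big(\Omega_{12}\Omega_{34}-\Omega_{13}\Omega_{24}+\Omega_{14}\Omega_{23}\big)\,\omega^{1234}$ and each of the three summands contains one of the vanishing coefficients, we obtain $\Omega\wedge\Omega=0$. Hence no invariant, and therefore no, complex symplectic form exists, which together with \cite[Proposition 5.8]{BFLM} proves the first assertion.

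For the final ``in particular'', any hypersymplectic structure carries a complex symplectic form (as recalled before the statement), so the non-flat neutral Calabi-Yau metrics of Theorem~\ref{new-neutral-CY}, whose complex structures are non-nilpotent, cannot arise from a hypersymplectic structure. I expect the main obstacle to be the reduction to invariant forms: one must ensure that the non-degeneracy $\Omega\wedge\Omega\neq0$ survives symmetrization, and this is precisely where the Stokes/volume argument is needed, since—unlike the real symplectic case—$\Omega\wedge\Omega$ is not a top-degree form and a cohomological nonvanishing input is required. The subsequent Lie-algebra computation is routine and, notably, independent of the remaining parameters $(\varepsilon,\nu,a,B)$.
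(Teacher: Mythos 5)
Your proposal is correct and follows essentially the same route as the paper's proof: reduction to invariant forms via symmetrization together with the Stokes-theorem argument ensuring non-degeneracy survives (this is exactly \cite[Proposition~6.1]{BFLM}, which the paper invokes), the citation of \cite[Proposition~5.8]{BFLM} for the SnN case, and the same direct computation on the normal form \eqref{structure-eq-WnN} showing that $d\Omega=0$ forces $\Omega_{23}=\Omega_{24}=\Omega_{34}=0$ and hence $\Omega\wedge\Omega=0$. The coefficients you read off agree with the paper's (your $\omega^{134}$, $\omega^{12\bar2}$, $\omega^{14\bar2}$, $\omega^{12\bar1}$ equations match the paper's expressions for $\partial\Omega$ and $\overline\partial\Omega$), so the two arguments coincide up to the order in which the closedness conditions are exploited.
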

					
\begin{proof}
Similarly to the argument given in the proof of Theorem~\ref{class-pK-nN-dim8}, the existence of a complex symplectic structure $\Omega$ on $X=(M,J)$ is equivalent to the existence of an invariant one (see~\cite[Proposition~6.1]{BFLM}). In fact, since $J$ is invariant, we get that $\widetilde  \Omega$ is an invariant closed (2,0)-form on $X$ such that $[\widetilde \Omega]=[\Omega]$ in the second complexified de Rham cohomology group $H_{\rm dR}^2(X;\mathbb{C})$. Since $X$ is compact, $[\Omega]^2 \not=0$ by Stokes theorem. Hence, $[{\widetilde \Omega}^2]=[\Omega]^2 \not=0$, so ${\widetilde \Omega}$ is non-degenerate and it is an invariant complex symplectic structure on $X$.
						
Therefore, it suffices to show the non-existence result for invariant complex symplectic structures.  
This result is proved in \cite[Proposition 5.8]{BFLM} for SnN complex structures.
Hence, we here focus on WnN complex structures. Recall that these are parametrized by \eqref{structure-eq-WnN} in Theorem~\ref{main-theorem} 
with the tuple $(\varepsilon,\delta,\nu,a,B)$ taking the possible values specified in that theorem. 
						
Consider any invariant (2,0)-form $\Omega$, i.e.,
$$
\Omega = \alpha\,\omega^{12} + \beta\,\omega^{13} + \gamma\,\omega^{14} + \tau\,\omega^{23} + \theta\,\omega^{24} + \xi\,\omega^{34},
$$
where $\alpha,\beta,\gamma,\tau,\theta,\xi \in {\mathbb C}$. 
Since $\Omega^2= 2(\alpha\xi-\beta\theta+\gamma\tau)\omega^{1234} $, we have that $\Omega^2\not=0$ if and only if $\alpha\xi-\beta\theta+\gamma\tau\neq0$. Note that $d\Omega=0$ if and only if $\partial\Omega=0$ and $\bar{\partial}\Omega=0$.
						
By \eqref{structure-eq-WnN}, the (3,0)-form $\partial\Omega$ is given by
$$
\partial\Omega=-(\gamma\nu+a\xi)\omega^{123} + \theta \omega^{134},
$$
so $\partial\Omega=0$ is equivalent to $\gamma\nu+a\xi=0$ and $\theta=0$. 
Taking into account these conditions, we arrive at 
$$
\begin{array}{rl}
	& \overline\partial\,\Omega=\, i(\beta\delta+\tau\varepsilon)\omega^{12\bar{1}}
	+i\tau\delta\,\omega^{12\bar{2}}
	-\gamma\nu\,\omega^{12\bar{3}}
	+\xi B\,\omega^{13\bar{1}}
	-(\tau-2\xi\delta\varepsilon\nu)\omega^{13\bar{3}} \\[4pt]
	& \phantom{\overline\partial\,\Omega=} -i\xi\varepsilon\,\omega^{14\bar{1}}
	-i\xi\delta\,\omega^{14\bar{2}}
	+\xi\nu\,\omega^{23\bar{3}}
	+i\xi\delta\,\omega^{24\bar{1}}.
\end{array}
$$
Therefore, if the (2,0)-form $\Omega$ is closed then $\tau=0=\xi$, and one has $\Omega^2=0$, i.e. the form is degenerate. Thus, there are no complex symplectic structures when $J$ is non-nilpotent.
						
The statement about neutral Calabi-Yau structures comes from the fact that any hypersymplectic structure is, in particular, complex symplectic.
\end{proof}

In view of Proposition~\ref{non-existence-hol-symplectic}, the following general question arises: \emph{Does the existence of a complex symplectic structure imply the nilpotency of the complex structure $J$?}
										
\vskip.3cm
					
It is well-known that the existence of a (positive definite) K\"ahler metric on a compact manifold imposes strong restrictions on the topology of the manifold. In the pseudo-K\"ahler case, as far as we know, no topological obstruction is known on the compact complex manifold, apart from those conditions coming from the existence of a symplectic structure on the underlying real manifold. The following results suggest a possible restriction on the first Betti number of pseudo-K\"ahler manifolds, at least for 
nilmanifolds with invariant complex structures. Another restriction seems to be provided on the step of nilpotency of the pseudo-K\"ahler nilmanifolds.

\begin{proposition}\label{Betti1}
Let $M$ be an $8$-dimensional nilmanifold endowed with a non-nilpotent complex structure $J$.
If $(M,J)$ admits a pseudo-K\"ahler structure, then the NLA associated to $M$ is isomorphic to one of the following:
\begin{equation*}
	\begin{split}
		& \mathfrak f_5^\gamma = (0,\,0,\,0,\,13,\,23,\,34,\,\frac{\gamma}{2}\cdot 12+35,\,14+25),\\	
		& \mathfrak f_7 = (0,\,0,\,0,\,13,\,23,\,14+25,\, 2\cdot 14+34,\, 15+24+35), \\
		& \mathfrak g^0_{10}=(0,\,0,\,0,\,13,\,23,\,14+25,\,15+24,\,16+27),
	\end{split}
\end{equation*}
with $\gamma\in\{0,1\}$.
In particular, the nilmanifold $M$ is either $3$-step or $4$-step nilpotent, and its first Betti number is $b_{1}(M)=3$.
\end{proposition}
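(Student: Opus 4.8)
The plan is to combine the classification of pseudo-K\"ahler non-nilpotent complex structures obtained in Theorem~\ref{class-pK-nN-dim8} with the classification of underlying real Lie algebras in Theorem~\ref{main-theorem-2}, and then read off the nilpotency step and the first Betti number. By Theorem~\ref{class-pK-nN-dim8}, if $(M,J)$ carries a pseudo-K\"ahler structure and $J$ is non-nilpotent, then there is a $(1,0)$-basis in which the structure equations are either those of case~i) (the WnN family~\eqref{ecus-pk-WnN}) or those of case~ii) (the SnN family). I would treat these two cases in turn.

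For the WnN case, the first step is to note that~\eqref{ecus-pk-WnN} is exactly~\eqref{structure-eq-WnN} with $\varepsilon=0$ and $\nu=1$, subject to the constraints on $(a,B)$ recorded in Theorem~\ref{class-pK-nN-dim8}~i). It then suffices to match these parameter values against Table~\ref{tab:cambios-base-real}: the subcase $a=0$, $B\in\{0,1\}$, $\varepsilon=0$ corresponds to the rows producing $\mathfrak{f}_5^0$ and $\mathfrak{f}_5^1$, while the subcase $a=1$, $\Imag B\geq 0$, $\varepsilon=0$ corresponds to $\mathfrak{f}_7^0$ (when $\Imag B=0$) and $\mathfrak{f}_7^1$ (when $\Imag B>0$). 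Using the explicit real bases listed in that table, the underlying real Lie algebra is therefore isomorphic to $\mathfrak{f}_5^\gamma$ or $\mathfrak{f}_7^\gamma$ with $\gamma\in\{0,1\}$.

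For the SnN case, the structure equations of Theorem~\ref{class-pK-nN-dim8}~ii) are precisely those of the counterexample built in~\cite{LU-PuresAppl}; exhibiting the same real change of basis as there identifies the underlying real Lie algebra as $\mathfrak{g}^0_{10}=(0,0,0,13,23,14+25,15+24,16+27)$. With the three families in hand, the step and $b_1$ follow by inspection. From Table~\ref{tab:real-invariants}, both $\mathfrak{f}_5^\gamma$ and $\mathfrak{f}_7^\gamma$ have ascending type $(3,5,8)$, hence are $3$-step and satisfy $b_1=3$. For $\mathfrak{g}^0_{10}$ one reads directly off the structure equations that $e^1,e^2,e^3$ span the closed $1$-forms, so $b_1=\dim H^1(\mathfrak{g}^0_{10})=3$, and a short computation of the descending central series $(8,5,3,1)$ shows it is $4$-step. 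In all cases $M$ is $3$-step or $4$-step with $b_1(M)=3$.

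I expect the main obstacle to be the bookkeeping in the WnN case: one must verify that the parameter reduction producing~\eqref{ecus-pk-WnN}, where $a$ and $B$ are already normalized, is consistent with the normalization used to build the real bases of Table~\ref{tab:cambios-base-real}, so that the assignment $(\varepsilon,\nu,a,B)\mapsto\mathfrak{f}_5^\gamma,\mathfrak{f}_7^\gamma$ is indeed the one claimed. A secondary point requiring separate care is that $\mathfrak{g}^0_{10}$ does \emph{not} appear among the $\mathfrak{f}_i$ (it is SnN rather than WnN), so its identification and the computation of its step and first Betti number cannot be extracted from Table~\ref{tab:real-invariants} and must be carried out by hand from the structure equations of case~ii).
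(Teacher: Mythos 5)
Your proposal is correct and follows essentially the same route as the paper: it combines Theorem~\ref{class-pK-nN-dim8} with Table~\ref{tab:cambios-base-real} to identify $\mathfrak f_5^\gamma$ and $\mathfrak f_7^\gamma$ in the WnN case, identifies the SnN algebra with $\mathfrak g^0_{10}$ via the known counterexample, and then reads off the nilpotency step and first Betti number. The only point you should make explicit is Nomizu's theorem, which is what allows the Lie-algebra Betti number $b_1(\mathfrak g)$ you compute to be identified with $b_1(M)$ of the nilmanifold (the paper invokes it explicitly, and for $\mathfrak g^0_{10}$ it cites \cite[Table~5]{LUV-SnN-2} rather than computing the invariants by hand, but your direct computation gives the same result).
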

					
\begin{proof}
From i) in Theorem~\ref{class-pK-nN-dim8} and Table~\ref{tab:cambios-base-real}, one can deduce that the NLAs associated to the $8$-dimensional nilmanifolds with a WnN complex structure that admit pseudo-K\"ahler metrics 
are $\mathfrak f_5^0$, $\mathfrak f_5^1$ and $\mathfrak f_7$. 
Moreover, every WnN complex structure on these NLAs gives rise to a pseudo-K\"ahler structure. The Lie algebra $\mathfrak g^0_{10}$ is associated to the SnN complex structure in Theorem~\ref{class-pK-nN-dim8} and it comes from~\cite{LUV-SnN-2}.
We observe that this NLA has another SnN complex structure (see~\cite[Table~5]{LUV-SnN-2}) that does not admit pseudo-K\"ahler metrics.
					
By Nomizu's Theorem, the de Rham cohomology $H^*_{\mathrm{dR}}(M;\mathbb R)$ of the nilmanifold $M$ can be computed
at the level of its underlying Lie algebra; in particular $b_{1}(M)=b_{1}(\frg)=3$ for 
$\frg=\mathfrak f_5^\gamma,\mathfrak f_7, \mathfrak g^0_{10}$. 						
Finally, note that the step of nilpotency of the Lie algebras $\mathfrak f_5^\gamma$ and 
$\mathfrak f_7$ is $s=3$, whereas for  $\mathfrak g^0_{10}$ we have $s=4$ (see the ascending types of these NLAs in Table~\ref{tab:real-invariants} and \cite[Table 5]{LUV-SnN-2}).
\end{proof}

\begin{remark}
There exist $8$-dimensional nilmanifolds $M$ endowed with non-nilpotent complex structures that are $5$-step or whose first Betti number is $b_{1}(M)=2$. Note also that the previous result provides a necessary but not sufficient condition: the nilmanifolds with a WnN complex structure whose underlying NLA is $\mathfrak f^0_4$ are $3$-step and have first Betti number equal to $3$, but they are not pseudo-K\"ahler.
\end{remark}

In the next result we show that similar restrictions to those provided in Proposition~\ref{Betti1} can be found for a larger class of complex nilmanifolds.
					
\begin{theorem}\label{restrictions-pK}
Let $M$ be a $2n$-dimensional nilmanifold with $2\leq n\leq 4$ endowed with an invariant complex structure $J$.
If $(M,J)$ admits a pseudo-K\"ahler structure, then the nilmanifold is $s$-step with $s\leq n$, and its first Betti number satisfies $b_{1}(M)\geq 3$.
\end{theorem}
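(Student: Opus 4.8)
The plan is to reduce to an invariant pseudo-K\"ahler form and then establish the two assertions—the bound on the nilpotency step and the bound on $b_1$—separately, splitting the argument according to the type of $J$. By \cite[Proposition~2.1]{LU-PuresAppl}, as in the proof of Theorem~\ref{class-pK-nN-dim8}, the existence of a pseudo-K\"ahler structure is equivalent to the existence of an invariant one, so the whole discussion takes place on the associated NLA $\frg$; I keep the notation $F$ for the resulting closed, non-degenerate, $J$-invariant real $2$-form.

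For the nilpotency step, when $J$ is \emph{nilpotent} I would argue directly from the ascending $J$-compatible series. There is $t$ with $\{0\}=\fra_0(J)\subsetneq\fra_1(J)\subsetneq\cdots\subsetneq\fra_t(J)=\frg$; each $\fra_k(J)$ is $J$-invariant, hence even-dimensional, and the inclusions are strict, so $\dim\fra_k(J)\geq 2k$ and therefore $t\leq n$. Since $\fra_k(J)\subseteq\frg_k$ (Section~\ref{subsec:different-types}), one gets $\frg=\fra_t(J)\subseteq\frg_t$, whence $\frg_t=\frg$ and the step satisfies $s\leq t\leq n$. For non-nilpotent $J$ this is unavailable, but then $n\in\{3,4\}$ (in real dimension $4$ every $J$ is nilpotent); by \cite{CFU} there is no pseudo-K\"ahler non-nilpotent structure when $n=3$, and for $n=4$ Proposition~\ref{Betti1} forces $\frg\in\{\mathfrak f_5^\gamma,\mathfrak f_7^\gamma,\mathfrak g^0_{10}\}$, of steps $3,3,4$, so $s\le n$ again.

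For the first Betti number I would start from a general identity for symplectic NLAs: since $dF=0$, for all $X,Y\in\frg$ and $Z\in\mathcal Z(\frg)$ one has $0=dF(X,Y,Z)=-F([X,Y],Z)$, so $F([\frg,\frg],\mathcal Z(\frg))=0$. Hence $Z\mapsto F(Z,\,\cdot\,)$ maps $\mathcal Z(\frg)$ injectively (by non-degeneracy) into the space of closed $1$-forms $([\frg,\frg])^{\perp}$, giving $\dim\mathcal Z(\frg)\leq b_1$ and equivalently $\mathcal Z(\frg)\subseteq[\frg,\frg]^{\perp_F}$. If $J$ is nilpotent then $\fra_1(J)\subseteq\mathcal Z(\frg)$ is non-trivial and $J$-invariant, so $\dim\mathcal Z(\frg)\geq 2$ and $b_1\geq 2$; it then remains only to exclude the borderline case $\dim\mathcal Z(\frg)=b_1=2$ (for non-nilpotent $J$ in dimension $8$, Proposition~\ref{Betti1} already yields $b_1=3$). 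In that borderline case $\mathcal Z(\frg)=[\frg,\frg]^{\perp_F}$, and moreover $\mathcal Z(\frg)\subseteq[\frg,\frg]$: otherwise a central $Z\notin[\frg,\frg]$ would make $\frg/\langle Z\rangle$ a nilpotent Lie algebra of dimension $\geq 3$ with $b_1=1$, which is impossible. Thus $\fra_1(J)=\mathcal Z(\frg)$ is a $2$-dimensional $J$-invariant isotropic ideal inside $[\frg,\frg]$. For $n=2$ this is already contradictory, since the bracket factors through $\Lambda^2(\frg/\mathcal Z(\frg))$, forcing $\dim[\frg,\frg]\le\binom{2n-2}{2}=1<2$; for $n=3$ I would invoke the classification of pseudo-K\"ahler $6$-dimensional NLAs in \cite{CFU}, noting that the naive quotient argument does \emph{not} suffice here, as $\mathfrak h^+_{26}=(0,0,12,13,23,14+25)$ carries complex structures with $b_1=2$ but is not pseudo-K\"ahler.

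The main obstacle is the case $n=4$ with $J$ nilpotent and $\dim\mathcal Z(\frg)=b_1=2$, where no classification of nilpotent complex structures in dimension $8$ is available. Here I would argue directly, in the spirit of the proof of Theorem~\ref{class-pK-nN-dim8}: using that $\fra_1(J)=\mathcal Z(\frg)$ is $2$-dimensional and that the only closed $(1,0)$-forms are multiples of $\omega^1$, I would write the most general nilpotent complex structure equations compatible with this flag, take a general real $(1,1)$-form $F$ as in \eqref{formaFund}, impose $\partial F=0$, and show that the resulting linear conditions on the coefficients $x_{k\bar l}$ force $F^4=0$, contradicting non-degeneracy. Isolating the independent components of $\partial F$ and verifying the vanishing of the top power $F^4$ is the crux of the proof and the one place where dimension $8$ is used in an essential way; this settles the remaining case and, combined with the previous paragraphs, yields $b_1\geq 3$ and $s\le n$ throughout.
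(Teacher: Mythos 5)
Your overall architecture is sound, and several pieces are correct and even self-contained where the paper just cites references: the reduction to an invariant $F$, the step bound for nilpotent $J$ via $\dim\fra_k(J)\geq 2k$ and $\fra_t(J)\subseteq\frg_t$ (this reproves \cite[Proposition 10 (iii)]{CFGU-dolbeault}, which is what the paper invokes), the identity $F([\frg,\frg],\mathcal Z(\frg))=0$ giving $\dim\mathcal Z(\frg)\leq b_1$, and the handling of non-nilpotent $J$ via \cite{CFU} (for $n=3$) and Proposition~\ref{Betti1} (for $n=4$), exactly as in the paper.

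However, there is a genuine gap at what you yourself identify as the crux: the case $n=4$, $J$ nilpotent, $\dim\mathcal Z(\frg)=b_1=2$. Here you only announce a plan (``I would write the most general nilpotent complex structure equations\dots and show that the resulting linear conditions force $F^4=0$''), with no argument that this computation terminates or even that the conclusion is the one you predict. As stated, the plan amounts to parametrizing all nilpotent complex structures on $8$-dimensional NLAs with a prescribed flag --- a classification that, as you note, does not exist in the literature --- so the proof is simply not complete at its decisive step. The gap closes much more easily, and this is exactly why the paper's proof is short: for a \emph{nilpotent} complex structure one has $b_1(M)\geq 3$ with no pseudo-K\"ahler hypothesis whatsoever (\cite[Proposition 15]{CFGU-dolbeault}). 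Concretely, in a nilpotent basis the second structure equation reads $d\omega^2=c\,\omega^{1\bar 1}$, so writing $\omega^1=e^1-i\,e^2$, $\omega^2=e^3-i\,e^4$, both $de^3$ and $de^4$ are real multiples of $e^1\wedge e^2$; hence some nonzero real combination of $e^3,e^4$ is closed and independent of $e^1,e^2$, giving three independent closed $1$-forms. In particular your borderline case ``nilpotent $J$ with $b_1=2$'' is vacuous (in every dimension, which also makes your $n=2$ and $n=3$ discussions unnecessary), and the pseudo-K\"ahler condition is only ever needed to exclude or constrain \emph{non-nilpotent} $J$ --- via \cite{CFU} in dimension $6$ and via Proposition~\ref{Betti1} in dimension $8$. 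Incorporating this observation turns your proposal into a complete proof along essentially the paper's lines.
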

					
\begin{proof}
Recall that every invariant complex structure on a $4$-dimensional nilmanifold is nilpotent. In six dimensions, it is proved in \cite{CFU} that pseudo-K\"ahler structures only exist when $J$ is nilpotent. 
Moreover, for any nilpotent complex structure we know by 
\cite[Proposition 10 (iii)]{CFGU-dolbeault} that $s\leq n$, and one also has $b_{1}(M)\geq 3$
by \cite[Proposition 15]{CFGU-dolbeault}.
Hence, it remains to study the case when the complex structure $J$ of an 8-dimensional pseudo-K\"ahler nilmanifold is non-nilpotent. This is done in Proposition~\ref{Betti1}, so the proof is complete.
\end{proof}

\medskip

\section*{Acknowledgments}
\noindent 
We are grateful to D. Millionshchikov for pointing out the reference~\cite{Mill24}. 
We are also grateful to the anonymous referees for very useful observations and suggestions that helped us to improve the presentation of the paper. Indeed, one of the referees detected that two of the Lie algebras given in the preliminary version of this paper were isomorphic. This allowed us to correct the classification in this new version of the paper. 
This work was partially supported by grant PID2020-115652GB-I00, funded by MCIN/AEI/10.13039/ 501100011033,
and by grant E22-17R ``Algebra y Geometr\'{\i}a'' (Gobierno de Arag\'on/FEDER).

\vspace{-0.25cm}

\end{document}